\newcommand{\itg}{\mathbb{Z}}
\newcommand{\rtn}{\mathbb{Q}}
\newcommand{\rl}{\mathbb{R}}
\newcommand{\cx}{\mathbb{C}}
\newcommand{\CB}{\mathcal{B}}
\newcommand{\CF}{\mathcal{F}}
\newcommand{\CH}{\mathcal{H}}
\newcommand{\CL}{\mathcal{L}}
\newcommand{\CO}{\mathcal{O}}
\newcommand{\CU}{\mathcal{U}}
\newcommand{\CV}{\mathcal{V}}
\newcommand{\CX}{\mathcal{X}}
\newcommand{\CY}{\mathcal{Y}}
\newcommand{\Ft}{\mathfrak{t}}
\newcommand{\ai}{\sqrt{-1}}
\newcommand{\inj}{\hookrightarrow}
\newcommand{\isom}{\stackrel{\sim}{\to}}
\newcommand{\kah}{K\"ahler }
\newcommand{\ke}{K\"ahler--Einstein }
\newcommand{\ddbar}{\partial \bar{\partial}}
\newcommand{\actson}{\curvearrowright}
\newcommand{\prj}{\mathbb{P}}
\theoremstyle{plain}
\newtheorem{theorem}{Theorem}[section]
\newtheorem{lemma}[theorem]{Lemma}
\newtheorem{proposition}[theorem]{Proposition}
\newtheorem{corollary}[theorem]{Corollary}
\theoremstyle{definition}
\newtheorem{definition}[theorem]{Definition}
\theoremstyle{definition}
\newtheorem{remark}[theorem]{Remark}
\begin{document}

\title{Anticanonically balanced metrics and the Hilbert--Mumford criterion for the $\delta_m$-invariant of Fujita--Odaka}
\author[Y.~Hashimoto]{Yoshinori Hashimoto}
\date{\today}
\address{Department of Mathematics, Osaka Metropolitan University, 3-3-138, Sugimoto, Sumiyoshi-ku, Osaka, 558-8585, Japan.}
\email{yhashimoto@omu.ac.jp}

\begin{abstract}
We prove that the stability condition for Fano manifolds defined by Saito--Takahashi, given in terms of the sum of the Ding invariant and the Chow weight, is equivalent to the existence of anticanonically balanced metrics. Combined with the result by Rubinstein--Tian--Zhang, we obtain the following algebro-geometric corollary: the $\delta_m$-invariant of Fujita--Odaka satisfies $\delta_m >1$ if and only if the Fano manifold is stable in the sense of Saito--Takahashi, establishing a Hilbert--Mumford type criterion for $\delta_m >1$. We also extend this result to the K\"ahler--Ricci $g$-solitons and the coupled K\"ahler--Einstein metrics, and as a by-product we obtain a formula for the asymptotic slope of the coupled Ding functional in terms of multiple test configurations.
\end{abstract}

\maketitle

\tableofcontents

\section{Introduction}

\subsection{Background}

Canonical metrics on Fano manifolds have been a focus of intensive studies in recent years, particularly in relation to the stability notions in Geometric Invariant Theory (GIT) \cite{MFK}. There are many important results, but we mention here only the ones that are directly related to the results in this paper. The existence of K\"ahler--Einstein metrics on a Fano manifold with no nontrivial holomorphic vector fields is equivalent to the stability condition called the uniform Ding stability \cite[Theorem A]{BBJ}. This stability condition is given in terms of objects called the test configurations, which is a certain class of degenerations of Fano manifolds, and plays the role of the $\cx^*$-action in the Hilbert--Mumford criterion of stability \cite{MFK}. The uniform Ding stability can be characterised by an invariant called the $\delta$-invariant introduced by Fujita--Odaka \cite{fo18}, in the sense that it is equivalent to $\delta >1$ by \cite{bj20,Fujita2019,fo18}. In a recent paper, K.~Zhang \cite{Zhang21} directly proved that $\delta >1$ implies the existence of K\"ahler--Einstein metrics, without using the uniform Ding stability itself and based instead on a finite dimensional approximation (often called quantisation) which is briefly reviewed below; indeed, all the objects mentioned above have finite dimensional approximations in an appropriate sense.

Donaldson \cite{donproj1} proved a foundational theorem that a K\"ahler metric of constant scalar curvature (such as K\"ahler--Einstein metrics) can be approximated by a sequence of Fubini--Study metrics called the balanced metrics, as long as the automorphism group is discrete. Combined with the theorem due to H.~Luo \cite{Luo} and S.~Zhang \cite{zhang96} (see also Phong--Sturm \cite{PS03}), which proves that a balanced metric exists if and only if the manifold is Chow stable, Donaldson's theorem implies that a K\"ahler manifold admitting a constant scalar curvature K\"ahler metric is asymptotically Chow stable \cite[Corollary 4]{donproj1}. The asymptotic Chow stability can be regarded as a finite dimensional approximation of the $K$-stability, which has a well-understood relationship to the Ding stability, in the sense that the Donaldson--Futaki invariant can be written as the limit of a sequence of Chow weights.

For Fano manifolds, we have a variant of balanced metrics that is called the anticanonically balanced metrics, which was introduced by Donaldson \cite[\S 2.2.2]{donnum09}. The analogue of Donaldson's theorem \cite{donproj1} above was established by Berman--Witt Nystr\"om \cite{BWN14}, who proved that the existence of \ke metrics (resp.~K\"ahler--Ricci $g$-solitons) implies that we can find a sequence of anticanonically balanced metrics that converges to the \ke metric (resp.~the K\"ahler--Ricci $g$-soliton), in the sense of currents; see also \cite{tak15} for the case of solitons. The convergence can in fact be improved to the smooth convergence by \cite[Theorem 1.3 with $N=1$]{tak19} and \cite{Ioos20,Ioos21}. From the point of view of stability, Saito--Takahashi \cite{st19} defined a notion of stability that can be regarded as an anticanonical version of the Chow stability, and proved that it is implied by the existence of anticanonically balanced metrics.

The $\delta$-invariant of Fujita--Odaka also has a finite dimensional approximation called the $\delta_m$-invariant, which originally appeared as an intermediate object for the definition of the $\delta$-invariant. Recently, Rubinstein--Tian--Zhang \cite{rtz} proved that the $\delta_m$-invariant characterises the existence of anticanonically balanced metrics. The theorem of K.~Zhang \cite{Zhang21} mentioned above relies on this result.

While many foundational works so far established a deep understanding of the uniform Ding stability, the $\delta$-invariant, and the K\"ahler--Einstein metrics, the relationship between the finite dimensional analogues of these concepts does not seem to be complete in the sense that no GIT stability condition, given in terms of a Hilbert--Mumford type criterion involving test configurations, has been proved to fully characterise $\delta_m > 1$ or the anticanonically balanced metrics. This is the question that is addressed in this paper.

\subsection{Statement of the results}

The main result of this paper is the following, which can be regraded as the anticanonical version of the theorem by H.~Luo \cite{Luo} and S.~Zhang \cite{zhang96}, and establishes the required correspondence between the anticanonically balanced metrics and the GIT stability.

\begin{theorem} \label{mthst}
Let $m \in \mathbb{N}$ be large enough such that $-mK_X$ is very ample. A Fano manifold $(X , -K_X)$ admits an anticanonically balanced metric at level $m$, which is unique up to $\mathrm{Aut}_0(X)$, if and only if it satisfies the following stability condition: for any very ample test configuration $( \CX , \CL )$ for $(X , -K_X)$ of exponent $m$ we have $\mathrm{Ding} ( \CX , \CL ) + \mathrm{Chow}_m ( \CX , \CL ) \ge 0$, with equality if and only if $( \CX , \CL )$ is product.
\end{theorem}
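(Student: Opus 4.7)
The plan is to implement a finite-dimensional Kempf--Ness correspondence on the symmetric space $\CB_m$ of positive Hermitian inner products on $H^0(X, -mK_X)$, following the blueprint of Donaldson's proof of the H.~Luo and S.~Zhang theorem and its adaptation to Fano manifolds by Saito--Takahashi. Writing $N_m := \dim H^0(X, -mK_X)$, the space $\CB_m$ is identified with $GL(N_m, \cx)/U(N_m)$ and carries a natural action of $GL(N_m, \cx)$; on it I introduce a functional
\[
\mathcal{F}_m := L_m + D_m,
\]
where $L_m$ is Donaldson's log-norm functional (whose first variation detects the Chow balanced condition in the classical Luo--Zhang sense) and $D_m$ is a finite-dimensional Ding functional on $\CB_m$ (whose first variation produces the discrepancy between the Fubini--Study anticanonical volume and the Bergman measure). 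A direct computation then identifies the critical points of $\mathcal{F}_m$ on $\CB_m$ with the anticanonically balanced metrics at level $m$.

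The second step is to establish convexity of $\mathcal{F}_m$ along one-parameter subgroups of $GL(N_m, \cx)$, i.e.\ along Bergman geodesics in $\CB_m$. The summand $L_m$ is affine along such orbits by elementary linear algebra. Convexity of $D_m$ along Bergman geodesics is the quantised analogue of Berndtsson's convexity of the Ding functional, and the equality case characterises Bergman rays tangent to the $\Aut_0(X)$-orbit. This identifies the flat directions of $\mathcal{F}_m$ with one-parameter subgroups of $GL(N_m, \cx)$ coming from the action of $\Aut_0(X)$ on $H^0(X, -mK_X)$, and hence with product test configurations of exponent $m$.

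The central technical step is the slope formula. For a very ample test configuration $(\CX, \CL)$ of exponent $m$, one obtains a one-parameter subgroup $\lambda$ of $GL(N_m, \cx)$ and a Bergman ray $h(t) = \lambda(e^t) \cdot h_0$; I would prove
\[
\lim_{t \to \infty} \frac{d}{dt} \mathcal{F}_m(h(t)) = \mathrm{Ding}(\CX, \CL) + \mathrm{Chow}_m(\CX, \CL),
\]
by computing the slopes of $L_m$ and $D_m$ separately. The $L_m$-slope reproduces $\mathrm{Chow}_m(\CX, \CL)$ by the Luo--Zhang computation adapted to the anticanonical polarisation, as already used in Saito--Takahashi. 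The $D_m$-slope reproduces $\mathrm{Ding}(\CX, \CL)$ via the identification of the Bergman ray $h(t)$ with the Phong--Sturm geodesic ray attached to $(\CX, \CL)$, combined with the standard computation that the slope of the Ding functional along this ray equals the non-Archimedean Ding invariant.

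With convexity and the slope formula in place, the theorem follows from the Kempf--Ness dichotomy applied to $\mathcal{F}_m$: existence of a critical point modulo $\Aut_0(X)$ is equivalent to coercivity of $\mathcal{F}_m$ on $\CB_m / \Aut_0(X)$, and coercivity is in turn equivalent, by convexity and the slope formula, to the inequality $\mathrm{Ding}(\CX, \CL) + \mathrm{Chow}_m(\CX, \CL) \ge 0$ holding for all such test configurations, with strict inequality outside the $\Aut_0(X)$-directions. The step I anticipate to be most delicate is the $\Aut_0(X)$-bookkeeping in the equality case: one must verify that every Bergman ray along which $\mathcal{F}_m$ is flat is generated by the action of $\Aut_0(X)$ on $H^0(X, -mK_X)$, so that the flat rays correspond bijectively to product test configurations of exponent $m$. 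This is also precisely the point where uniqueness of the balanced metric up to $\Aut_0(X)$ is extracted from the strict convexity of $\mathcal{F}_m$ transverse to the $\Aut_0(X)$-orbit.
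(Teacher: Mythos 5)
Your overall architecture is the same as the paper's: set up the quantised Ding functional $\mathscr{D}_m$ on $\CB_m$, observe that it is convex along Bergman geodesics (Berndtsson's convexity of $\mathscr{L}\circ\mathrm{FS}$ plus the affineness of $\mathscr{E}_m$), compute the asymptotic slope along a Bergman geodesic ray attached to a very ample test configuration as $\mathrm{Ding}+\mathrm{Chow}_m$, and then argue by a Kempf--Ness type dichotomy. Both directions are indeed treated symmetrically by this approach. However, there is one genuine gap which is precisely the technical heart of the paper's proof.

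Your slope formula is established only for rational directions in $\CB_m$, i.e.\ for Bergman geodesic rays generated by an $H_0$-hermitian $A\in\mathfrak{gl}(H^0(X,-mK_X))$ with \emph{rational} eigenvalues, since these are the rays that correspond to honest very ample test configurations via Proposition~\ref{pprt}. To pass from ``nonnegative (and off the $\mathrm{Aut}_0(X)$-orbit, strictly positive) slope along all test-configuration rays'' to ``coercivity of $\mathscr{D}_m$ modulo $\mathrm{Aut}_0(X)$ over $\CB_m$'', you must control the asymptotic slope along \emph{every} Bergman geodesic ray from a basepoint, including the uncountably many with irrational eigenvalues. Convexity alone does not close this loop: the slope, viewed as a function of the direction $A$, is not obviously continuous, since the flat limit $\CX_0$ of the $e^{-tA}$-orbit jumps discontinuously as $A$ varies. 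Your proposal does not address this at all, whereas the paper's Lemma~\ref{lmlmacb} and Proposition~\ref{lmasll} are devoted exactly to it: they show that the asymptotic slope of $\mathscr{L}(\mathrm{FS}(H_t))$ is $\frac{2}{m}\sum_i \lambda_i C_i(\CX_0)$ with coefficients $C_i(\CX_0)$ depending only on the flat limit $\CX_0$ (not on the particular $A$ generating it). This linearity of the slope in the eigenvalues, on each stratum where $\CX_0$ is fixed, is what makes the rational approximation argument from \cite[Lemmas 3.15 and 3.17]{hk18} go through (take $A_p\to A$ rational with the same flat limit for large $p$, and in the borderline case decompose $A=A_\alpha+A_\beta$ with $A_\alpha$ rational and all three generators producing the same flat limit). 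Without some substitute for Proposition~\ref{lmasll}, the Kempf--Ness dichotomy step in your proposal is not justified.

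Two smaller remarks. First, the decomposition $\mathcal{F}_m=L_m+D_m$ as you describe it is internally inconsistent: if $L_m$ is affine along Bergman rays then $L_m=-\mathscr{E}_m$, whose slope is $-\mathrm{tr}(A+A^*)/(mN_m)$, not $\mathrm{Chow}_m$; and $D_m=\mathscr{L}\circ\mathrm{FS}$ has slope $-1+\mathrm{lct}$, not $\mathrm{Ding}$. It is only after the two $(\bar{\CL}^{\nu})^{n+1}$ terms cancel in the sum that one obtains $\mathrm{Ding}+\mathrm{Chow}_m$; alternatively, one can decompose as in Saito--Takahashi with $L_m=\mathscr{E}\circ\mathrm{FS}-\mathscr{E}_m$ and $D_m=\mathscr{D}\circ\mathrm{FS}$, in which case the slopes are indeed $\mathrm{Chow}_m$ and $\mathrm{Ding}$ but $L_m$ is no longer affine and $D_m$ is no longer convex along Bergman rays. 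The argument is fine once one keeps track correctly, but your write-up conflates the two decompositions. Second, you invoke ``Berndtsson's convexity of the Ding functional'' for $D_m$, but what is actually needed is Berndtsson's convexity of $\mathscr{L}$ along psh rays (the paper's Theorem~\ref{thberndt}); the Ding functional $\mathscr{D}=\mathscr{L}-\mathscr{E}$ is a difference of two convex functionals along Bergman rays, so its convexity is not automatic. The correct convexity statement is for $\mathscr{D}_m=\mathscr{L}\circ\mathrm{FS}-\mathscr{E}_m$, where the affineness of $\mathscr{E}_m$ saves the day. These are bookkeeping issues rather than fatal errors, but they should be straightened out.
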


One direction of the above result, i.e.~the existence of anticanonically balanced metrics implying the stated stability condition, was proved by Saito--Takahashi \cite[Theorem 1.2]{st19}. Thus the main point of the above theorem is the stability implying the anticanonically balanced metrics, although the proof that we give in this paper easily establishes both directions.

The main application of the above theorem, combined with Rubinstein--Tian--Zhang \cite[Theorem 2.3]{rtz}, is the following result.

\begin{corollary} \label{mthstc}
	Suppose that a Fano manifold $X$ has no nontrivial holomorphic vector fields. For $m \in \mathbb{N}$ large enough such that $-mK_X$ is very ample, the $\delta_m$-invariant of Fujita--Odaka satisfies $\delta_m >1$ if and only if $\mathrm{Ding} ( \CX , \CL ) + \mathrm{Chow}_m ( \CX , \CL ) >0$ for any nontrivial very ample test configuration $( \CX , \CL )$ for $(X , -K_X)$ of exponent $m$.
\end{corollary}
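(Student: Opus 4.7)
The plan is to obtain the corollary by chaining Theorem~\ref{mthst} with the characterisation of $\delta_m > 1$ via anticanonically balanced metrics due to Rubinstein--Tian--Zhang \cite[Theorem 2.3]{rtz}, using the discrete-automorphism hypothesis to eliminate the ``equality iff product'' clause from the stability condition in Theorem~\ref{mthst}.

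First I would observe that under the hypothesis the identity component $\mathrm{Aut}_0(X)$ is trivial, so every holomorphic $\cx^*$-action on $X$ is trivial (any one-parameter subgroup of $\mathrm{Aut}(X)$ must factor through $\mathrm{Aut}_0(X)$). Hence the only product test configurations of $(X, -K_X)$ are trivial. Consequently, the stability condition of Theorem~\ref{mthst}---namely $\mathrm{Ding}(\CX,\CL) + \mathrm{Chow}_m(\CX,\CL) \geq 0$ with equality if and only if $(\CX,\CL)$ is product---is equivalent under this hypothesis to the strict inequality $\mathrm{Ding}(\CX,\CL) + \mathrm{Chow}_m(\CX,\CL) > 0$ for every nontrivial very ample test configuration of exponent $m$, noting that trivial test configurations contribute zero to both terms.

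Next I would apply Theorem~\ref{mthst} in this reformulated form, obtaining the equivalence between the stability condition stated in the corollary and the existence of an anticanonically balanced metric at level $m$ (which in this case is actually unique, since $\mathrm{Aut}_0(X)$ is trivial). Finally I would invoke \cite[Theorem 2.3]{rtz}, which identifies the existence of such a metric with $\delta_m > 1$ under the discrete-automorphism hypothesis, to close the chain of equivalences and conclude.

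The main obstacle is essentially nonexistent here, since the corollary is a direct assembly of two equivalences that are already in hand. The only subtle point is the identification of ``product'' with ``trivial'' test configurations under the no-nontrivial-holomorphic-vector-fields hypothesis, which is handled in the reduction above; everything else is a syntactic chaining of the two characterisations.
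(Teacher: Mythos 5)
Your proposal is correct and follows the paper's argument almost exactly: reduce ``product'' to ``trivial'' under trivial $\mathrm{Aut}_0(X)$, apply Theorem~\ref{mthst}, and chain with Rubinstein--Tian--Zhang \cite[Theorem 2.3]{rtz}. The one small point worth making explicit is that Theorem~\ref{mthst} asserts the equivalence of the stability condition with \emph{existence together with uniqueness up to $\mathrm{Aut}_0(X)$}; in the direction $\delta_m>1 \Rightarrow$ stability, RTZ only yields existence, so one must separately know that anticanonically balanced metrics are unique modulo $\mathrm{Aut}_0(X)$ (Proposition~\ref{ppuqqdfn}), which the paper invokes explicitly and which you gesture at but do not name when asserting that the metric ``is actually unique, since $\mathrm{Aut}_0(X)$ is trivial''.
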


While the above is a result in algebraic geometry, it seems that no purely algebro-geometric proof is known at the moment of writing this paper; the proof that we give in \S \ref{scpfmtmc} relies on Theorem \ref{mthst} and \cite[Theorem 2.3]{rtz} which concern the anticanonically balanced metrics in differential geometry.  Note also that, in the above corollary, the triviality of test configurations is defined in a way that is not commonly used recently; see Remark \ref{rmtrtc}.

For the definitions of the terminologies that arise in the above results, the reader is referred to \S \ref{scacbalm}, \S \ref{scdstcst}, and \S \ref{aoidm}. 

We also extend these results to the K\"ahler--Ricci $g$-solitons (Theorem \ref{thkrsbm} and Corollary \ref{crstdm}) and the coupled \ke metrics (Theorem \ref{thckebm} and Corollary \ref{crcpdm}). While the main argument of the proof carries over almost word by word for both cases, it turns out that we need to strengthen the stability condition for the coupled \ke case proposed by Hultgren--Witt Nystr\"om \cite{HWN19}. For this purpose, we first define the notion of a test configuration generated by the $\cx^*$-actions of multiple test configurations (Definition \ref{dftcgcp}). We then define a strengthened version of the coupled Ding invariant (Definition \ref{dfcpdginv}), and prove that it naturally arises as the asymptotic slope of the coupled Ding functional (Theorem \ref{ppcdgas}). It seems natural to expect that this result can be applied to give further results for the coupled \ke metrics, as pointed out at the end of \S \ref{sccdinv}, but we shall only consider its implications to the anticanonically balanced metrics in this paper. The case of K\"ahler--Ricci $g$-solitons is also treated similarly, but the analogue of Corollary \ref{mthstc} is weaker than the full characterisation of $\delta^g_m >1$ (see Corollary \ref{crstdm} and \S \ref{aoidmgs}) because of nontrivial holomorphic vector fields, which is analogous to the well-known phenomenon that varieties with nontrivial holomorphic vector fields are never (uniformly) Ding stable.

\begin{remark}
	We point out how Theorem \ref{mthst} relates to some known results for toric Fano manifolds. Yotsutani \cite{Yotsu2017} proved that the asymptotic Chow \textit{semi}stability implies the Ding \textit{poly}stability for toric Fano manifolds, where we note that there are various results (e.g.~\cite{Ono2011,Yotsu2016}) available for the Chow semistability of toric Fano manifolds. Combined with Theorem \ref{mthst}, we immediately find that asymptotically Chow semistable toric Fano manifolds admit infinitely many anticanonically balanced metrics, by noting that the Chow weight must be zero for product test configurations under such hypotheses (cf.~\cite[Proposition 5.4]{st19} or the proof of Lemma \ref{rmfstgs}); this is not a new result, however, as it also follows from \cite[Theorem 1.7]{BWN14} and \cite[Theorem 1.1]{WZ04} since the higher Futaki invariants \cite{Futaki04} vanish by the Chow semistability \cite[Theorem 5.5]{st19}. 
\end{remark}

It should also be possible to consider the twisted version of above results as in \cite[\S 4.3]{rtz}, but we decide not to treat such cases since Proposition \ref{lmasll}, which is the key estimate in the proof of Theorem \ref{mthst}, seems more complicated with the twist term.

\subsection{Organisation of the paper}

We start by reviewing the differential-geometric preliminaries in \S \ref{scdgprem}. While most of the materials in \S \ref{scdgprem} are well-known, Lemma \ref{lmrfmc} for the coupled Ding functional seems to be a new result that plays an important role later. Algebro-geometric preliminaries are recalled in \S \ref{scagprem}; again this is mostly a review of well-known results, but in \S \ref{sccdinv} we define the notion of a test configuration generated by the $\cx^*$-actions of multiple test configurations, and define the strengthened version of the coupled Ding invariant (Definition \ref{dfcpdginv}) which seems to be new. The relationship between these analytic and algebraic concepts are reviewed in \S \ref{scprcsf}, which is a summary of many well-established foundational results.

The proof of the main results is given in \S \ref{scpfmr}; we note that, almost as a by-product of the proof for the coupled version, we obtain a formula (Theorem \ref{ppcdgas}) for the asymptotic slope of the coupled Ding functional along a coupled psh ray that does not seem to have been considered before and seems appropriate for the study of the coupled K\"ahler--Einstein metrics. While the invariants $\delta$ and $\delta_m$ are important objects that appear in Corollary \ref{mthstc}, we only review them in Appendix \ref{appdmivfo} since the main body of this paper does not quite depend on these invariants.

\bigskip

\noindent \textbf{Acknowledgements.} The author thanks Kento Fujita, Tomoyuki Hisamoto, Eiji Inoue, Yuji Odaka, and Ryosuke Takahashi for helpful discussions. This work is partially supported by JSPS KAKENHI (Grant-in-Aid for Early-Career Scientists), Grant Number JP19K14524.

\section{Differential-geometric preliminaries} \label{scdgprem}

\subsection{K\"ahler--Einstein metrics and Ding functional} \label{sckemdf}

Let $(X , -K_X)$ be a Fano manifold of complex dimension $n$, polarised by the anticanonical bundle. Throughout this paper, we use the additive notation for the tensor product of line bundles and write $\mathrm{Vol}(X)$ for $\int_X c_1(-K_X)^n $. We work with the very ample line bundle $-mK_X$, by choosing $m \in \mathbb{N}$ to be sufficiently large; we shall further assume that $m$ is sufficiently divisible for the coupled \ke case in \S \ref{sccekbm}. We fix a reference hermitian metric $h_0$ on $-mK_X$, rather than $-K_X$, and write $\omega_0 \in c_1 (-K_X)$ for the associated K\"ahler metric re-scaled by $1/m$. This reference metric is assumed to satisfy various extra hypotheses according to the situation under consideration; we further assume $h_0$ to be a Fubini--Study metric defined in \S \ref{scacbalm} for convenience, and moreover to be $T$-invariant when we consider the K\"ahler--Ricci $g$-solitons in \S \ref{scqkrgs}. The coupled \ke case \S \ref{sccekbm} is slightly more complicated as there will be an auxiliary reference metric (\ref{dfrfchm}), but the one in (\ref{dfrfchm2}) is the relevant one which we denote by $h_0$. In any case, without loss of generality we may assume that $h_0$ satisfies all these required properties and fix the notation once and for all to streamline the notational convention.


We define
\begin{equation*}
	\CH := \{ \phi \in C^{\infty} (X , \rl) \mid \omega_0 + \ai \ddbar \phi / 2 \pi >0 \}
\end{equation*}
for the space of smooth K\"ahler metrics in $c_1 (-K_X)$. Note that a hermitian metric $e^{-m \phi} h_0$ on $-mK_X$, with $\phi \in \CH$, corresponds to the K\"ahler metric $\omega_{\phi} := \omega_0 + \ai \ddbar \phi / 2 \pi $.

An elementary yet important observation is that a hermitian metric on $-K_X$ naturally defines a volume form on $X$; more precisely, a hermitian metric on $-K_X$ is a smooth positive section of $\mathcal{H}om_{C^{\infty}_X} ((-K_X) \otimes \overline{(-K_X)} , \cx )$, which corresponds to the one of $K_X \otimes \overline{K_X}$, i.e.~a volume form. To clarify the notation, we shall write $d \mu_{\phi}$ for the volume form on $X$ defined by the hermitian metric $e^{- m \phi} h_0$ on $-mK_X$ with $\phi \in \CH$; note that we have
\begin{equation} \label{eqcmvf}
	d \mu_{\phi} = e^{- \phi } d \mu_0 .
\end{equation}
It is convenient to re-scale $h_0$ by a nonzero constant if necessary so that $\int_X d \mu_0 = 1$.

Another notational convention that we use is
\begin{equation*}
\fint_X f d \mu_{\phi} :=  \left( \int_X d \mu_{\phi} \right)^{-1} \int_{X} f d \mu_{\phi}
\end{equation*}
for an integrable function $f$ on $X$, noting that this is just a re-scaling so as to have $\fint_X d \mu_{\phi} = 1$.

We recall the following definition.

\begin{definition}
	The \textbf{Ding functional} $\mathscr{D} : \CH \to \rl$ is defined by
	\begin{equation*}
	\mathscr{D} (\phi):= \mathscr{L} (\phi) - \mathscr{E} (\phi),
	\end{equation*}
	where $\mathscr{L} , \mathscr{E} : \CH \to \rl$ are defined respectively as
	\begin{equation*}
	\mathscr{L} (\phi) := - \log \int_X d \mu_{\phi},
\end{equation*}
and
\begin{equation*}
	\mathscr{E}(\phi) := \frac{1}{(n+1) \mathrm{Vol} (X)} \sum_{j=0}^n \int_X \phi \omega_0^{n-j} \wedge \omega^j_{\phi} .
\end{equation*}
\end{definition}

A straightforward computation reveals that a critical point of $\mathscr{D}$ is precisely the metric that satisfies $\omega^n_{\phi} =  d \mu_{\phi}$, which is equivalent to the \textbf{K\"ahler--Einstein} equation 
\begin{equation*}
	\mathrm{Ric} (\omega_{\phi}) = \omega_{\phi}.
\end{equation*}
An important result due to Berndtsson \cite{Ber09a,ber15} is that $\mathscr{L}$ is convex along psh (plurisubharmonic) rays in $\CH$, which we recall later (Theorem \ref{thberndt}) in the form that is necessary for the proof of our main results. Note that Berndtsson's convexity implies that the critical point, i.e.~the K\"ahler--Einstein metric, is unique \cite[Theorems 1.2 and 5.1]{ber15}, giving an alternative proof of the result originally obtained by Bando--Mabuchi \cite{BanMab} that can be generalised to more singular situations.

Note finally that the convention $\int_X d \mu_0 = 1$ implies $\mathscr{L}( 0) = \mathscr{E} (0) = \mathscr{D} (0) = 0$.

\subsection{Anticanonically balanced metrics} \label{scacbalm}

An important subset of $\CH$ is the Fubini--Study metrics given in terms of the Kodaira embedding
\begin{equation*}
	\iota : X \inj \prj (H^0 (X , -mK_X)^{\vee} )
\end{equation*}
defined as follows. Suppose that we have a positive definite hermitian form $H$ on $H^0 (X , -mK_X)$. This naturally induces a hermitian metric on the hyperplane bundle $\CO(1)$ over $\prj (H^0 (X , -mK_X)^{\vee} )$, and hence on $-mK_X = \iota^* \CO (1)$ by pullback. By taking the $m$-th root, we get a hermitian metric on $-K_X$ and write $\mathrm{FS} (H)$ for the corresponding K\"ahler potential in $\CH$. Noting that we may write $H = e^{A^*} e^A$ for some $A \in \mathfrak{gl} (H^0 (X , -mK_X))$, the set $\CB_m$ of positive definite hermitian forms on $H^0 (X , -mK_X)$ can be identified with the right coset space
\begin{equation} \label{eqcmbrcs}
	\CB_m = U (N_m) \backslash GL(N_m , \cx ) ,
\end{equation}
where $N_m := \dim_{\cx} H^0 (X , -mK_X)$. Thus, the construction as above defines the \textbf{Fubini--Study map} \cite[\S 2]{donproj2}
\begin{equation*}
	\mathrm{FS} : \CB_m \to \CH,
\end{equation*}
which is injective \cite[Theorem 1.1]{Lempert2021}. We write $\CH_m := \mathrm{FS} (\CB_m)$ for its image, and the elements of $\CH_m$ are called the \textbf{Fubini--Study metrics}. We also note that we have the Hilbert map \cite[\S 2.2]{donnum09},
\begin{equation*}
	\mathrm{Hilb}_{\mu} : \CH \to \CB_m ,
\end{equation*}
defined by associating $\phi \in \CH$ to the (re-scaled) $L^2$-inner product $\frac{N_m}{\int_X d \mu_{\phi}} \int_X e^{-m \phi} h_0 ( \cdot , \cdot ) d \mu_{\phi}$ on $H^0 (X , -mK_X)$.

We can write down the Fubini--Study metric more explicitly as follows. We choose a reference hermitian form $H_0 \in \CB_m$ once and for all, and we define the reference metric $h_0$ on $-mK_X$ to be the Fubini--Study metric defined by $H_0$. More precisely, writing $\tilde{h}_0$ for the hermitian metric on the hyperplane bundle over $\prj (H^0 (X , -mK_X)^{\vee} )$ defined by $H_0$, we write
\begin{equation} \label{dfhtilde}
	h_0 := \iota^* \tilde{h}_0.
\end{equation}
With this reference metric chosen, the Fubini--Study metric $\mathrm{FS} (H) \in \CH_m$ defined by $H \in \CB_m$ can be written as
\begin{equation} \label{eqfs}
	\mathrm{FS} (H) = \frac{1}{m} \log \sum_{i=1}^{N_m} \left\Vert s^H_i \right\Vert^2_{h_0} \in \CH ,
\end{equation}
where $\{ s^H_i \}_{i=1}^{N_m}$ is any $H$-orthonormal basis for $H^0 (X , -mK_X)$. Note that $\mathrm{FS} (H)$ can be characterised as the unique element in $\CH$ such that $h_H := \exp( -m \mathrm{FS} (H))h_0$ satisfies $\sum_{i=1}^{N_m} \left\Vert s^H_i \right\Vert^2_{h_H} = 1$ over $X$.

\begin{remark} \label{rmrfbs}
	In what follows, $\{ s_i \}_{i=1}^{N_m}$ stands for a fixed $H_0$-orthonormal basis for $H^0 (X , -mK_X)$, which we shall refer to as the \textbf{reference basis}, and $H_0$ will often be regarded as the identity matrix with respect to it. For each $A \in \mathfrak{gl} (H^0 (X , -mK_X))$, we write $A_{ij}$ for the $(i,j)$-th entry of the matrix representing $A$ with respect to the reference basis, unless otherwise stated. The hermitian conjugate $A^*$ of $A$ will henceforth be assumed to be with respect to $H_0$; we write $A^{\dagger}$ for the hermitian conjugate with respect to another hermitian form specified in the context. 
\end{remark}

We observe that $\CB_m = U (N_m) \backslash GL(N_m , \cx )$ is a Riemannian symmetric space with respect to the natural bi-invariant metric. An important object is a one-parameter family in $\CH_m$ defined by the geodesic in $\CB_m$ with respect to the bi-invariant metric, defined more explicitly as follows.

\begin{definition}
	 The \textbf{Bergman geodesic ray} emanating from $H_0$ is a family $\{ H_t \}_{t \ge 0} \subset \CB_m$ of positive definite hermitian forms defined by
\begin{equation} \label{eqbggda}
	H_t := e^{-t A^*}e^{-tA},
\end{equation}
for an $H_0$-hermitian endomorphism $A \in \mathfrak{gl} (H^0 (X , -mK_X))$.
\end{definition}
By abuse of terminology, the resulting family $\{ \mathrm{FS} (H_t) \}_{t \ge 0} \subset \CH_m$ emanating from $h_0$ is also called the Bergman geodesic ray. The formula (\ref{eqfs}) immediately yields
\begin{equation} \label{eqfst}
	\mathrm{FS} (H_t) = \frac{1}{m} \log \sum_{i=1}^{N_m} \left\Vert e^{tA} s_i \right\Vert^2_{h_0},
\end{equation}
where $\{ s_i \}_{i=1}^{N_m}$ is any $H_0$-orthonormal basis for $H^0 (X , -mK_X)$, which we may of course take to be the reference basis, by noting that $\{ e^{tA} s_i \}_{i=1}^{N_m}$ is an $H_t$-orthonormal basis.

We recall the following functional defined by Berman--Witt Nystr\"om \cite[\S 4.2.2]{BWN14}, which ``quantises'' the Ding functional.

\begin{definition}
	The \textbf{quantised Ding functional} $\mathscr{D}_m : \CB_m \to \rl$ is defined as
	\begin{equation*}
		\mathscr{D}_m ( H ) := \mathscr{L} (\mathrm{FS} (H)) - \mathscr{E}_m (H)
	\end{equation*}
	where
	\begin{equation*}
		\mathscr{E}_m (H) := - \frac{1}{mN_m} \log \det (H H^{-1}_0).
	\end{equation*}
\end{definition}

The computation of the asymptotic slope of $\mathscr{D}_m$ along Bergman geodesic rays will be of crucial importance in what follows, and hence we record an elementary result for the derivative of $\mathscr{D}_m$.

\begin{lemma} \emph{(cf.~\cite[(7.3) and (7.5)]{BBGZ})} \label{lmdvffs}
Suppose that $\{ H_t \}_{t \ge 0} \subset \CB_m$ is a Bergman geodesic ray (\ref{eqbggda}). Then we have
	\begin{equation*}
		\frac{d}{dt} \mathscr{L} (\mathrm{FS} (H_t)) = \frac{1}{m}  \sum_{i,j=1}^{N_m} (A_{ij} + A^*_{ij}) \fint_X   \frac{ h_0(e^{tA} s_i , e^{tA}s_j )}{\sum_{l=1}^{N_m} \Vert e^{tA} s_l \Vert^2_{h_0}} d \mu_{\mathrm{FS} (H_t)}, 
	\end{equation*}
	and
	\begin{equation*}
		\frac{d}{dt} \mathscr{E}_m (H_t) = \frac{\mathrm{tr} (A^* +A)}{mN_m}.
	\end{equation*}
	\end{lemma}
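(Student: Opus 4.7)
The plan is to differentiate both expressions by direct computation, exploiting that the endomorphism $A$ commutes with the flow $e^{tA}$ along a Bergman geodesic ray; this lets me re-express $A v_i$ in the moving basis $v_j := e^{tA} s_j$ with the same matrix entries $A_{ji}$ as in the reference basis, thereby circumventing the messier expansion that would arise from working directly with the pointwise Gram matrix of $\{s_k\}$ under $h_0$.

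For $\mathscr{L}(\mathrm{FS}(H_t))$, the first step is to use (\ref{eqcmvf}) to write $\int_X d\mu_{\phi_t} = \int_X e^{-\phi_t} d\mu_0$ with $\phi_t := \mathrm{FS}(H_t)$; differentiating in $t$ then yields
\[
\frac{d}{dt}\mathscr{L}(\phi_t) = \fint_X \dot\phi_t\, d\mu_{\phi_t}.
\]
Next, setting $B(t) := \sum_i h_0(v_i, v_i)$, the explicit formula (\ref{eqfst}) gives $m\phi_t = \log B(t)$, hence $m \dot\phi_t = \dot B(t)/B(t)$ with $\dot B(t) = \sum_{i}\bigl[h_0(Av_i, v_i) + h_0(v_i, Av_i)\bigr]$. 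Using $[A, e^{tA}] = 0$ one obtains $A v_i = e^{tA}(A s_i) = \sum_j A_{ji}\, v_j$; expanding both inner products, using $\overline{A_{kj}} = A^*_{jk}$ (justified by Remark \ref{rmrfbs}, since the reference basis is $H_0$-orthonormal and $A^*$ is taken with respect to $H_0$), and relabeling indices, the sum collapses to $\sum_{i,j}(A_{ij} + A^*_{ij})\, h_0(v_i, v_j)$. Combining this with the expression for $\dot\phi_t$ yields the first identity.

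For the second identity, the plan is to write $H_t H_0^{-1} = e^{-tA^*}e^{-tA}$ (with $H_0$ represented by the identity in the reference basis, per Remark \ref{rmrfbs}) and use $\det(e^M) = e^{\mathrm{tr}(M)}$ to obtain
\[
\mathscr{E}_m(H_t) = -\frac{1}{mN_m}\log\det(H_t H_0^{-1}) = \frac{t\,\mathrm{tr}(A+A^*)}{mN_m};
\]
differentiating in $t$ immediately gives the stated formula. There is no real analytic obstacle here; the only care required is in the index bookkeeping, and the commutativity $[A, e^{tA}] = 0$ is essentially what makes the derivative depend cleanly on the entries of $A$ in the reference basis rather than on the moving frame.
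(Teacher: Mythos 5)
Your proof is correct and follows the same route as the paper's, which is simply to differentiate the explicit logarithmic formula (\ref{eqfst}) inside the integral and read off the coefficients. The paper leaves the chain-rule expansion $m\dot\phi_t = \dot B(t)/B(t)$ with $\dot B = \sum_{i,j}(A_{ij}+A^*_{ij})h_0(e^{tA}s_i,e^{tA}s_j)$ implicit as ``obvious''; you fill in exactly those details, using $[A,e^{tA}]=0$ and the index conventions of Remark \ref{rmrfbs}, and your computation for $\mathscr{E}_m$ via $\det(e^M)=e^{\mathrm{tr}M}$ is the intended one.
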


\begin{proof}
The first formula is obvious from
\begin{equation*}
		\frac{d}{dt} \mathscr{L} (\mathrm{FS} (H_t)) = \left( \int_X d \mu_{\mathrm{FS} (H_t)} \right)^{-1} \int_X \left( \frac{d}{dt} \mathrm{FS} (H_t) \right) d \mu_{\mathrm{FS} (H_t)}
\end{equation*}
and the equation (\ref{eqfst}). The second is also immediate from the definition.
\end{proof}

Note that the above lemma immediately implies
\begin{equation} \label{eqemqaff}
		\frac{d^2}{dt^2} \mathscr{E}_m (H_t) = 0.
\end{equation}

Recall that the \textbf{Bergman function} $\rho_m (\mathrm{FS}(H))
 \in C^{\infty} (X , \rl)$ (also called the distortion function in \cite[\S 7.3]{BBGZ}) is defined as follows: writing $\{ \sigma_i \}_{i=1}^{N_m}$ for an orthonormal basis for $H^0 (X , -mK_X)$ with respect to $\frac{\mathrm{Vol}(X)}{N_m}\mathrm{Hilb}_{\mu} \circ \mathrm{FS} (H)$, we define
\begin{equation} \label{eqdberg}
	\rho_m (\mathrm{FS}(H)):= \sum_{i=1}^{N_m} \Vert \sigma_i \Vert^2_{h_H},
\end{equation}
where $h_H := \exp( -m \mathrm{FS} (H)) h_0$. The following well-known result is contained in \cite[\S 7.3]{BBGZ}, and also stated slightly implicitly in  \cite{st19,tak19}.

\begin{proposition} \label{ppacbal}
The following are equivalent for $H \in \CB_m$.
\begin{enumerate}
\renewcommand{\labelenumi}{(\roman{enumi})}
		\item There exists an $H_0$-hermitian $A \in \mathfrak{gl} (H^0 (X , -mK_X))$ such that $H = e^{-A^*} e^{-A}$ and that for all $i,j=1, \dots , N_m$ we have
		\begin{equation*} 
			 \fint_{X} \frac{h_0 (e^A s_i , e^A s_j) }{\sum_{l=1}^{N_m} \Vert e^A s_l \Vert^2_{h_0}} d \mu_{\mathrm{FS}(H)} - \frac{1}{N_m} \delta_{ij} = 0,
		\end{equation*}
		where $\delta_{ij}$ is the Kronecker delta.
		\item The Bergman function $\rho_m (\mathrm{FS}(H))$ is constant over $X$.
		\item $\mathrm{Hilb}_{\mu} \circ \mathrm{FS} (H) = H$.
		\item $H \in \CB_m$ is a critical point of $\mathscr{D}_m$.
	\end{enumerate}
\end{proposition}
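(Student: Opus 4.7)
The plan is to establish the chain (i) $\Leftrightarrow$ (iii) $\Leftrightarrow$ (ii) by direct computation, and then to tie in (iv) via Lemma \ref{lmdvffs}. I begin with (i) $\Leftrightarrow$ (iii). The key observation is that when $H = e^{-A^*}e^{-A}$ the basis $\{e^A s_i\}_{i=1}^{N_m}$ is $H$-orthonormal. Substituting $h_{\mathrm{FS}(H)} = e^{-m\mathrm{FS}(H)} h_0$ together with the identity $e^{-m\mathrm{FS}(H)} = \bigl(\sum_l \Vert e^A s_l \Vert^2_{h_0}\bigr)^{-1}$ from (\ref{eqfs}) into $\mathrm{Hilb}_\mu \circ \mathrm{FS}(H) = \frac{N_m}{\int_X d\mu_{\mathrm{FS}(H)}}\int_X e^{-m\mathrm{FS}(H)}h_0(\cdot,\cdot)\,d\mu_{\mathrm{FS}(H)}$, the matrix entries of $\mathrm{Hilb}_\mu \circ \mathrm{FS}(H)$ in the $H$-orthonormal basis $\{e^A s_i\}$ are precisely $N_m \fint_X h_0(e^A s_i, e^A s_j)/\sum_l \Vert e^A s_l \Vert^2_{h_0}\,d\mu_{\mathrm{FS}(H)}$. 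The equation $\mathrm{Hilb}_\mu\circ\mathrm{FS}(H) = H$ then says this matrix equals $\delta_{ij}$, which is precisely the balance equation in (i).

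For (ii) $\Leftrightarrow$ (iii), I would use the classical reciprocity between the Bergman function and the Hilbert map. Expressing the basis $\{\sigma_i\}$ orthonormal for $\frac{\mathrm{Vol}(X)}{N_m}\mathrm{Hilb}_\mu\circ\mathrm{FS}(H)$ in terms of an $H$-orthonormal basis $\{s^H_i\}$ via $\sigma_i = \sum_j U_{ij} s^H_j$, a direct calculation gives $\rho_m(\mathrm{FS}(H))(x) = \mathrm{tr}(M(x) U^* U)$, where $M(x)_{kl} = h_{\mathrm{FS}(H)}(s^H_k, s^H_l)(x)$ and $U^* U$ is the inverse of the Gram matrix of $\{s^H_j\}$ under $\frac{\mathrm{Vol}(X)}{N_m}\mathrm{Hilb}_\mu\circ\mathrm{FS}(H)$. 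Using $\mathrm{tr}\,M(x) = \sum_j \Vert s^H_j(x)\Vert^2_{h_{\mathrm{FS}(H)}} = 1$, the direction (iii) $\Rightarrow$ (ii) is immediate with constant value $N_m/\mathrm{Vol}(X)$; for the converse, pointwise constancy of $\mathrm{tr}(M(x) U^*U)$ combined with the very ampleness of $-mK_X$ (so that the Kodaira image is sufficiently non-degenerate) forces $U^*U$ to be a scalar multiple of $I$, and integration pins down the scalar to yield (iii).

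Finally, for (i) $\Leftrightarrow$ (iv), I would combine Lemma \ref{lmdvffs} with (\ref{eqemqaff}) applied at $t=1$ to the Bergman geodesic $H_t = e^{-tA^*}e^{-tA}$ reaching $H$, which yields
\[
\frac{d}{dt}\mathscr{D}_m(H_t)\Big|_{t=1} = \frac{1}{m}\sum_{i,j=1}^{N_m}(A_{ij}+A^*_{ij})\left[\fint_X \frac{h_0(e^A s_i, e^A s_j)}{\sum_l \Vert e^A s_l \Vert^2_{h_0}}\,d\mu_{\mathrm{FS}(H)} - \frac{\delta_{ij}}{N_m}\right].
\]
The implication (i) $\Rightarrow$ (iv) is then immediate. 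For (iv) $\Rightarrow$ (i), the single choice of $A$ satisfying $H = e^{-A^*}e^{-A}$ only tests the derivative in one tangent direction, so some extra input is required: I would re-run the derivation of Lemma \ref{lmdvffs} with $H$ playing the role of the reference metric (legitimate since $\mathrm{FS}$ and $\mathrm{Hilb}_\mu$ are intrinsic), and apply the resulting formula to Bergman geodesics emanating from $H$ at $t=0$, where the initial tangent directions span all $H$-Hermitian matrices, so that vanishing of the derivative in every such direction forces the balance matrix to vanish entry-by-entry. This change-of-reference step is the main (mild) obstacle in the argument, but is essentially formal.
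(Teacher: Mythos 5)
Your decomposition (i) $\Leftrightarrow$ (iii) $\Leftrightarrow$ (ii) plus (i) $\Leftrightarrow$ (iv) via Lemma \ref{lmdvffs} is the same as the paper's, and the identification of the $H$-orthonormal basis $\{e^A s_i\}$ and the substitution into $\mathrm{Hilb}_\mu$ for (i) $\Leftrightarrow$ (iii) is exactly right. Two points are worth flagging. First, on (i) $\Leftrightarrow$ (iv): you correctly notice that evaluating Lemma \ref{lmdvffs} at $t=1$ along the single Bergman geodesic from $H_0$ through $H$ only probes one tangent direction at $H$, so a change-of-reference computation is genuinely needed for (iv) $\Rightarrow$ (i). But the same objection applies to your claim that ``(i) $\Rightarrow$ (iv) is then immediate'': vanishing of one directional derivative does not make $H$ a critical point. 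The fix you propose (rederiving the slope formula with $H$ as reference, so the tangent directions range over all $H$-Hermitian $B$ and the derivative takes the form $\frac{2}{m}\mathrm{tr}(\tilde{B}M)$ with the balance matrix $M$ independent of $B$) closes both implications at once; you should present it as covering both directions, not only (iv) $\Rightarrow$ (i). Second, your route for (ii) $\Rightarrow$ (iii) is different from the paper's: the paper deduces it essentially from the formula (\ref{eqfs}) and the injectivity of $\mathrm{FS}$ (Lempert), by noting that $\rho_m \equiv c$ forces $\mathrm{FS}(\mathrm{Hilb}_\mu \circ \mathrm{FS}(H)) = \mathrm{FS}(H) + \mathrm{const}$, hence $\mathrm{Hilb}_\mu \circ \mathrm{FS}(H) = e^\lambda H$, with the trace normalisation in $\mathrm{Hilb}_\mu$ then pinning $\lambda = 0$. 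You instead invoke the statement that constancy of $\mathrm{tr}(M(x)U^*U)$, together with linear non-degeneracy of the Kodaira image, forces $U^*U$ to be scalar. That fact (injectivity modulo constants of $\Fu(N_m) \ni \ai B \mapsto v(x)^* B\, v(x)/\Vert v(x)\Vert^2 \in C^\infty(X,\rl)$) is indeed true and classical in the balanced-metrics literature, but it is not a consequence of very ampleness alone in any obvious way; as written it is asserted, not proved, so you should either cite it or prove it, or switch to the injectivity-of-$\mathrm{FS}$ argument, which is cleaner and is what the paper has in mind.
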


\begin{proof}
	The proof of $\text{(i)} \Leftrightarrow \text{(iv)}$ is an obvious consequence of Lemma \ref{lmdvffs}, which gives
\begin{equation*}
	\frac{d}{dt} \mathscr{D}_m (H_t) = \frac{1}{m} \sum_{i,j=1}^{N_m} (A_{ij} + A^*_{ij}) \left(  \fint_{X} \frac{h_0 (e^{tA} s_i , e^{tA} s_j) }{\sum_{l=1}^{N_m} \Vert e^{tA} s_l \Vert^2_{h_0}} d \mu_{\mathrm{FS}(H_t)} - \frac{1}{N_m} \delta_{ij} \right).
\end{equation*}
$\text{(i)} \Leftrightarrow \text{(iii)}$ immediately follows from (\ref{eqfs}) and the definition of $\mathrm{Hilb}_{\mu}$, and $\text{(ii)} \Leftrightarrow \text{(iii)}$ is obvious from (\ref{eqfs}) and (\ref{eqdberg}).
\end{proof}

\begin{definition}
	A Fubini--Study metric $\mathrm{FS} (H) \in \CH_m$ is said to be \textbf{anticanonically balanced} at level $m$ if it satisfies one of the equivalent conditions in Proposition \ref{ppacbal}.
\end{definition}

The item (i) of Proposition \ref{ppacbal} can be regarded as the ``zero of the moment map'' condition, and $\mathscr{D}_m$ can be regarded as the Kempf--Ness type functional. It is thus natural to expect that the existence of the anticanonically balanced metric can be characterised by a Hilbert--Mumford type criterion in Geometric Invariant Theory. The appropriate criterion, as it turns out, is the $F$-stability defined by Saito--Takahashi \cite{st19} that we review in Definition \ref{dffstst}. For more details on the anticanonically balanced metrics, the reader is referred to \cite[\S 7]{BBGZ}.


\begin{remark} \label{rmdgtrinv}
	Note that the functionals $\mathscr{D}$ and $\mathscr{D}_m$ are translation invariant, in the sense that they satisfy $\mathscr{D} (\phi) = \mathscr{D} (\phi + c)$ and $\mathscr{D}_m (H) = \mathscr{D}_m (e^c H)$ for any $c \in \rl$, which follows immediately from their definitions.
\end{remark}

We finally note that Berndtsson's convexity (recalled later in Theorem \ref{thberndt}) and (\ref{eqemqaff}) immediately imply that a critical point of $\mathscr{D}_m$ must be the global minimum over $\CB_m$.

\subsection{K\"ahler--Ricci $g$-solitons and balanced metrics}  \label{scqkrgs}

We start with the preliminary materials on the automorphism group of Fano manifolds. We write $\mathrm{Aut}_0 (X)$ for the identity component of the automorphism group of $X$. First observe that $\mathrm{Aut}_0 (X) = \mathrm{Aut}_0 (X , -K_X)$ is a linear algebraic group, since the action $\mathrm{Aut}_0 (X) \actson X$ naturally lifts to the anticanonical bundle, which is ample since $X$ is Fano. This in turn implies that we have the action $\mathrm{Aut}_0 (X) \actson \prj (H^0 (X , -mK_X)^{\vee} )$ which preserves the image $\iota(X) \subset \prj (H^0 (X , -mK_X)^{\vee} )$ by the equivariant embedding theorem \cite{Kambayashi} (see also \cite[\S 5.1]{CG}), for any $m \in \mathbb{N}$ such that $-mK_X$ is very ample. More precisely, for $m$ large enough there exists a faithful rational representation
\begin{equation} \label{eqautffebd}
	\theta : \mathrm{Aut}_0 (X) \inj GL (H^0 (X , -mK_X)),
\end{equation}
which is unique up to the choice of the linearisation (i.e.~an overall constant multiple by $\cx^*$), such that it is equivariant with respect to $\iota$, i.e.~for any $x \in X$ and $a \in \mathrm{Aut}_0 (X) $ we have
\begin{equation*} 
	\iota (a \cdot x) = \theta (a) \cdot \iota (x).
\end{equation*}
We observe that an element of $GL (H^0 (X , -mK_X))$ which preserves $\iota(X) \subset \prj (H^0 (X , -mK_X)^{\vee} )$ must be contained in the image of $\theta$ above.

\begin{lemma} \label{lmautaobm}
The group action $\mathrm{Aut}_0 (X) \actson \CB_m$ given by the representation (\ref{eqautffebd}) as
\begin{equation} \label{eqautaobm}
	a \cdot H := \theta(a^{-1})^{\dagger} H \theta(a^{-1}),
\end{equation}
where $\theta(a^{-1})^{\dagger}$ is the hermitian conjugate of $\theta(a^{-1})$ with respect to $H$, is an isometry with respect to the bi-invariant metric on $\CB_m$ which is consistent with the natural action $\mathrm{Aut}_0 (X) \actson \CH$.
\end{lemma}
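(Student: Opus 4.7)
The plan is threefold: (1) interpret formula (\ref{eqautaobm}) as the pullback of hermitian forms and verify the group-action axiom; (2) establish the isometry claim by exploiting the explicit expression of the bi-invariant metric; and (3) match the action with the natural $\mathrm{Aut}_0(X)$-action on $\CH$ via the equivariance property of the Kodaira embedding built into (\ref{eqautffebd}).

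First, I would read the formula $\theta(a^{-1})^{\dagger} H \theta(a^{-1})$ as representing the hermitian form $(v, w) \mapsto H(\theta(a^{-1}) v, \theta(a^{-1}) w)$, i.e.~the pullback of $H$ by $\theta(a^{-1})$, with the $\dagger$ playing the role of the conjugate-linear slot of a sesquilinear form. Positive-definiteness and hermiticity of $a \cdot H$ are then immediate, and well-definedness on $\CB_m \cong U(N_m) \backslash GL(N_m, \cx)$ is automatic since we work directly with hermitian forms. The group-action axiom $a \cdot (b \cdot H) = (ab) \cdot H$ then follows in one line from the homomorphism property $\theta(b^{-1}) \theta(a^{-1}) = \theta((ab)^{-1})$.

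Second, the bi-invariant metric on $\CB_m$, at the base point $H$, is given on hermitian tangent vectors $X, Y$ by $\langle X, Y \rangle_H = \tr(H^{-1} X H^{-1} Y)$ (up to a positive constant). The tangent map of $H \mapsto g^* H g$ is $X \mapsto g^* X g$, and the cyclic invariance of the trace yields directly
\begin{equation*}
\langle g^* X g, g^* Y g \rangle_{g^* H g} = \tr\bigl((g^* H g)^{-1} g^* X g (g^* H g)^{-1} g^* Y g\bigr) = \tr(H^{-1} X H^{-1} Y) = \langle X, Y \rangle_H.
\end{equation*}
Setting $g = \theta(a^{-1})$ gives the isometry statement.

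Third, for consistency with the action on $\CH$: the natural $\mathrm{Aut}_0(X)$-action on hermitian metrics on $-mK_X$ is by pullback, $a \star h := (a^{-1})^* h$, which passes to K\"ahler potentials $\phi \in \CH$ as $a \cdot \phi = \phi \circ a^{-1} + \frac{1}{m} \log\bigl( (a^{-1})^* h_0 / h_0 \bigr)$ relative to the fixed reference $h_0$. The Kodaira embedding equivariance $\iota \circ a^{-1} = \theta(a^{-1}) \circ \iota$ built into (\ref{eqautffebd}) implies that the pullback by $\theta(a^{-1})$ of the hyperplane-bundle Fubini--Study metric $\tilde{h}_H$ on $\prj(H^0(X, -mK_X)^{\vee})$ is precisely $\tilde{h}_{a \cdot H}$, directly from the construction of the Fubini--Study metric from a hermitian form. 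Pulling down to $X$ via $\iota$ and comparing with the reference $h_0$ then yields $\mathrm{FS}(a \cdot H) = a \cdot \mathrm{FS}(H)$, as required.

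The main notational subtlety to be navigated is parsing (\ref{eqautaobm}) carefully so that one really does land in the space of hermitian forms; beyond that, every step is standard linear algebra or an application of the naturality of the Fubini--Study construction under projective linear automorphisms. A secondary bookkeeping item is that $h_0$ is not in general $\mathrm{Aut}_0(X)$-invariant, so the action on K\"ahler potentials involves an additive smooth-function correction, but this correction appears consistently on both sides of $\mathrm{FS}(a \cdot H) = a \cdot \mathrm{FS}(H)$ since the same $h_0$ is used throughout.
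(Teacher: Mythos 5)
Your proof is correct, and the mathematical content matches the paper's, but you take a more computational route on both halves. For the isometry, the paper simply observes that under the identification $H = e^{B^{\dagger}}e^{B}$ the formula (\ref{eqautaobm}) is nothing but right multiplication $e^B \mapsto e^B \theta(a^{-1})$ on the coset space $U(N_m)\backslash GL(N_m,\cx)$, and right translations are isometries of any bi-invariant metric by definition; you instead verify invariance directly from the explicit tensor formula $\langle X,Y\rangle_H = \tr(H^{-1}XH^{-1}Y)$, which is the same fact unwound. For consistency with the action on $\CH$, the paper cites \cite[Lemma 8]{yhextremal} for the identity $a^*\mathrm{FS}(H) = \mathrm{FS}(a\cdot H)$, whereas you rederive it from the equivariance $\iota\circ a^{-1} = \theta(a^{-1})\circ\iota$ together with the naturality of the Fubini--Study construction under $GL(H^0(X,-mK_X))$; your derivation is essentially the content of that cited lemma. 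Both routes are fine: the paper's is shorter, yours is self-contained. Two small bookkeeping remarks: your functorial reading of $\theta(a^{-1})^{\dagger}H\theta(a^{-1})$ as the pullback form $(v,w)\mapsto H(\theta(a^{-1})v,\theta(a^{-1})w)$ is indeed the intended meaning (the matrix form is $\theta(a^{-1})^*H\theta(a^{-1})$ with $*$ the $H_0$-conjugate, which is also what the paper's proof uses via the coset identification, so you navigated the slightly delicate notation correctly); and the sign in your additive correction term $\frac{1}{m}\log((a^{-1})^*h_0/h_0)$ should be negative with the conventions you set up, though this does not affect the argument since, as you note, the same correction appears on both sides.
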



\begin{proof}
	Recalling the isomorphism $\CB_m = U (N_m) \backslash GL(N_m , \cx )$ in (\ref{eqcmbrcs}) given by $H = e^{B^{\dagger}}  e^{B}$, (\ref{eqautaobm}) is exactly the action on $\CB_m$ given by the right multiplication $e^{B} \mapsto e^{B} \cdot  \theta(a^{-1}) $ and hence is an isometry with respect to the bi-invariant metric. It is consistent with the natural action $\mathrm{Aut}_0 (X) \actson \CH$ since $a^* \mathrm{FS}(H) = \mathrm{FS} (a \cdot H)$ by \cite[Lemma 8]{yhextremal}.
\end{proof}

In what follows, we shall mostly regard $\mathrm{Aut}_0 (X)$ as a closed subgroup of $GL (H^0 (X , -mK_X))$ by (\ref{eqautffebd}), and suppress $\theta$ in the notation. Likewise, its Lie algebra $\mathfrak{aut} (X)$ will be regarded as a Lie subalgebra of $\mathfrak{gl} (H^0 (X , -mK_X))$. We define a maximal compact subgroup $K_0 := \mathrm{Aut}_0 (X) \cap U (N_m)$ of $\mathrm{Aut}_0 (X)$, where the unitarity is defined with respect to the reference hermitian form $H_0 \in \CB_m$. Note that $K_0$ is the isometry group of the reference K\"ahler metric $\omega_0$ by Lemma \ref{lmautaobm} and \cite[Theorem 1.1]{Lempert2021}. We also define a reductive subgroup $\mathrm{Aut}_0 (X)_r := K_0^{\cx}$ of $\mathrm{Aut}_0 (X)$ by its complexification, and note that $\mathrm{Aut}_0 (X)$ can be written as a semidirect product of its unipotent radical and $\mathrm{Aut}_0(X)_r$ by the Jordan--Chevalley decomposition. We write $\mathfrak{aut}(X)_r$ for the Lie algebra of $\mathrm{Aut}_0(X)_r$, and note that if $A \in \mathfrak{aut}(X)$ is $H_0$-hermitian then it must be contained in $\mathfrak{aut}(X)_r$.

For any connected subgroup $K$ of $K_0$, we define
\begin{equation*}
	\CB_m^K := \{ H \in \CB_m \mid \text{$H$ commutes with all elements in $K$.} \},
\end{equation*}
where we identified $H \in \CB_m$ with a hermitian endomorphism with respect to the reference basis. Note that we have $u \cdot H = H$ for all $u \in K$ and $H \in \CB_m^K$, with the action $u \cdot H$ given by (\ref{eqautaobm}), since $u \cdot H = u^{\dagger} H u = u^{-1} H u$ for all $u \in K$ by $K \subset U(N_m)$, noting that $u^{\dagger}$ agrees with the $H_0$-hermitian conjugate $u^*$ since $u$ commutes with $H$. Defining $\CH^K$ for the set of K\"ahler potentials that are invariant under the $K$-action and setting $\CH_m^K := \CH^K \cap \CH_m$, we thus get $\mathrm{FS} (\CB_m^K) \subset \CH^K_m$ by Lemma \ref{lmautaobm}. Just as in (\ref{eqcmbrcs}), we can write $\CB^K_m$ as a right coset space
\begin{equation*}
	\CB_m^K = C(K) \backslash C (K)^{\cx} 
\end{equation*}
where $C(K)$ is the centraliser of $K$ in $U(N_m)$ and $C (K)^{\cx}$ is its complexification. In particular, $\CB^K_m$ is a Riemannian symmetric space with respect to the bi-invariant metric.

We now define the K\"ahler--Ricci $g$-solitons, following \cite[\S 2.2]{BWN14} (see also \cite[\S 6.2]{rtz}). Let $T$ be a (compact) real torus in $\mathrm{Aut}_0 (X)$, and suppose that $(X,-K_X)$ admits a Hamiltonian (with respect to $\omega_0$) $T$-action which is also holomorphic; note that this necessarily implies that $T$ is a subgroup of $K_0$. We write $T^{\cx}$ for the complexification of $T$, $\mathrm{Aut}_0 (X,T^{\cx})$ for the elements in $\mathrm{Aut}_0 (X)$ that commute with $T^{\cx}$, and $\mathfrak{aut} (X,T^{\cx})$ for its Lie algebra. Just as in Lemma \ref{lmautaobm}, we have a natural action of $\mathrm{Aut}_0 (X,T^{\cx} )$ on $\CB_m^T$ which is an isometry with respect to the bi-invariant metric and consistent with the natural action on $\CH^T$. As before, we define its reductive subgroup $\mathrm{Aut}_0 (X,T^{\cx} )_r$ by the complexification of  $\mathrm{Aut}_0 (X,T^{\cx} ) \cap U(N_m)$, whose Lie algebra is denoted by $\mathfrak{aut} (X , T^{\cx})_r$.

The action $T \actson (X,-K_X)$, with the K\"ahler form $\omega_{\phi}$ ($\phi \in \CH^T$), defines a \textbf{moment map}
\begin{equation*}
m_{\phi} : X \to \Ft^{\vee} \cong \rl^{\dim T} ,
\end{equation*}
and its image $P:= m_{\phi} (X)$ is a compact convex polytope in $\rl^{\dim T}$ known as the Delzant polytope. The \textbf{Duistermaat--Heckman measure} $\nu$ is the measure on $\rl^{\dim T}$, supported on $P$, defined by
\begin{equation*}
\nu := (m_{\phi} )_* \left( \frac{1}{\mathrm{Vol} (X)} \frac{\omega^n_{\phi}}{n!} \right)
\end{equation*}
which is known to be absolutely continuous and independent of $\phi$ \cite{DuiHec82}.

\begin{definition}
For a $T$-invariant K\"ahler potential $\phi \in \CH^T$ and a smooth function $g : P \to \rl_{>0}$, the \textbf{$g$-Monge--Amp\`ere measure} $\mathrm{MA}_g (\phi)$ is a smooth volume form on $X$ defined by
\begin{equation*}
\mathrm{MA}_g (\phi) := \frac{g(m_{\phi})}{\mathrm{Vol} (X)} \frac{\omega^n_{\phi}}{n!} .
\end{equation*}
\end{definition}

We can define $\mathrm{MA}_g (\phi)$ for a more general singular potential $\phi$ as in \cite[\S 2.2]{BWN14} but we only consider the smooth case. Note that the definition of $\nu$ means that we have the volume normalisation
\begin{equation*}
\int_X \mathrm{MA}_g (\phi) = \int_P g \nu .
\end{equation*}

\begin{definition}
Let $m_{\phi} : X \to \Ft^{\vee}$ be a moment map for the torus action with respect to the \kah form $\omega_{\phi}$, and $g : P \to \rl_{>0}$ be a smooth function. We say that $\phi \in \CH^T$ is a \textbf{K\"ahler--Ricci $g$-soliton} if it satisfies
\begin{equation*}
\mathrm{Ric} (\omega_{\phi}) = \omega_{\phi} + \ai \ddbar \log g (m_{\phi}) .
\end{equation*}
\end{definition}

An equivalent way of writing down the above equation is
\begin{equation*}
 \frac{\mathrm{MA}_g (\phi)}{\int_P g \nu} = e^{- \phi + f_{\phi} } \frac{d \mu_{\phi}}{\int_X d \mu_{\phi}} ,
\end{equation*}
where $f_{ \phi}$ is the Ricci potential, which is the unique ($T$-invariant) smooth function on $X$ which satisfies
\begin{equation*}
\mathrm{Ric} (\omega_{\phi}) = \omega_{\phi} + \ai \ddbar f_{ \phi}  \quad \text{and} \quad \fint_X e^{f_{ \phi}} d \mu_{\phi} = 1.
\end{equation*}
The above description in terms of the $g$-Monge--Amp\`ere measure can be easily extended to allow for more singular solutions, as explained in \cite[\S 2.2]{BWN14}.

\begin{remark}
The K\"ahler--Ricci $g$-soliton reduces to the usual K\"ahler--Ricci soliton for an appropriate choice of $g$ (see \cite[\S 2.3]{BWN14}, \cite[(2.35)]{Hisamotomab}) when we take $m_{\phi}$ to be the Hamiltonian function for the real part of the soliton vector field (i.e.~the holomorphy potential of the soliton vector field); recall that the soliton vector field is a holomorphic vector field uniquely determined by the volume minimisation, as proved by Tian--Zhu \cite[Lemma 2.2]{TianZhu2002}. We may assume that $T^{\cx}$ contains this soliton vector field, as it provides the most interesting examples, although it is not necessary for the proof of the results in this paper. Note also that the holomorphy potential is a real function as long as the K\"ahler metrics under consideration are $T$-invariant. The K\"ahler--Ricci $g$-solitons also include the Mabuchi soliton, which was defined by Mabuchi \cite{Mab2001} and seems to attract renewed attention after the work of Y.~Yao \cite{Yaomab}, by taking $g$ to be as in \cite[(2.32)]{Hisamotomab}.
\end{remark}


\begin{remark}
	Han--Li \cite{HanLi20} (and Hisamoto \cite{Hisamotomab} for the case of Mabuchi solitons) proved that the existence of K\"ahler--Ricci $g$-solitons is equivalent to a version of uniform stability.
\end{remark}

Following \cite[\S 2.6]{BWN14}, we define a functional $\mathscr{D}^{g} : \CH^T \to \rl$ by
\begin{equation*}
	\mathscr{D}^{g} (\phi) :=   \mathscr{L} (\phi) - \mathscr{E}^g (\phi),
\end{equation*}
where $\mathscr{E}^g$ is defined by its first variation as
\begin{equation} \label{eqgmaeg}
\delta \mathscr{E}^g |_{\phi} (\psi) := \int_X \psi \mathrm{MA}_g (\phi) =  \int_X \psi  \frac{g(m_{\phi})}{\mathrm{Vol} (X)}  \frac{\omega^n_{\phi}}{n!}.
\end{equation}
That $\mathscr{E}^g$ is well-defined is proved by Berman--Witt Nystr\"om \cite[Lemma 2.14]{BWN14}, where the case of the K\"ahler--Ricci solitons was originally proved by X.~Zhu \cite[Lemma 3.1]{Zhu2000}. It then follows that $\phi \in \CH^T$ is a critical point of $\mathscr{D}^g$ if and only if it is a K\"ahler--Ricci $g$-soliton. See \cite[\S 2.4]{BWN14} for more details.

The above functional can be ``quantised'', as proposed by \cite[\S 4]{BWN14}. First we write
\begin{equation*}
P_m := \{ \lambda \in \mathrm{Hom} (T^{\cx}, \cx^*) \mid \lambda \text{ appears as a weight of } T^{\cx} \actson H^0 (X , -mK_X) \}.
\end{equation*}
It is well-known that $\frac{1}{m} P_m \subset P = m_{\phi} (X)$, as in \cite[Lemma 13]{Lah19}. We have the weight decomposition
\begin{equation} \label{eqwtdct}
H^0 (X , -mK_X ) = \bigoplus_{\lambda \in P_m} R_{m ,\lambda}
\end{equation}
where $T^{\cx}$ acts with weight $\lambda \in P_m$ on $R_{m , \lambda}$. We also write
\begin{equation*}
N_{m , \lambda} := \dim_{\cx} R_{m , \lambda}
\end{equation*}
and
\begin{equation*}
\overline{g_m} := \frac{1}{N_m} \sum_{\lambda \in P_m} g ( \lambda / m) N_{m , \lambda}.
\end{equation*}

\begin{definition}
The \textbf{quantised Ding functional for the K\"ahler--Ricci $g$-soliton} is a map $\mathscr{D}^{g}_m : \CB_m^T \to \rl$ defined by 
\begin{equation*}
	\mathscr{D}^{g}_m (H) := \mathscr{L} (\mathrm{FS}(H)) - \mathscr{E}^g_m (H),
\end{equation*}
where $\mathscr{E}_m^g : \CB_m^T \to \rl$ is defined by
\begin{equation*}
\mathscr{E}^g_m (H) := -  \frac{1}{m N_m \overline{g_m}} \sum_{\lambda \in P_m} g ( \lambda / m) \log \det \left(HH_0^{-1} |_{R_{m , \lambda}} \right) .
\end{equation*}
An \textbf{anticanonically $g$-balanced metric} at level $m$ is a critical point of $\mathscr{D}^{g}_m$.
\end{definition}

Note that, for the Bergman geodesic ray $\{ H_t \}_{t \ge 0} \subset \CB^T_m$ as defined in (\ref{eqbggda}), we have
\begin{equation} \label{eqslegm}
	\frac{d}{dt} \mathscr{E}^g_m (H_t) =  \frac{1}{m N_m \overline{g_m}} \sum_{\lambda \in P_m} g ( \lambda / m) \mathrm{tr} \left( (A+A^*)|_{R_{m , \lambda}} \right) .
\end{equation}

Just as we saw in Proposition \ref{ppacbal}, we can characterise the critical point of $\mathscr{D}^g_m$ by the ``zero of the moment map'' condition. Suppose that we write $\left\{ s_{\alpha}^{(\lambda )} \right\}_{\alpha =1}^{N_{m , \lambda}}$ for an $H_0$-orthonormal basis for each $R_{m ,\lambda}$, noting that the weight subspaces are orthogonal to each other with respect to $H_0$ which is $T$-invariant. Then $H = e^{-A^*} e^{-A} \in \CB^T_m$ is a critical point of $\mathscr{D}^g_m$ if and only if
\begin{equation*} 
 \fint_{X} \frac{h_0 \left( e^A s^{(\lambda )}_{\alpha} , e^A s^{(\lambda )}_{\beta} \right) }{\sum_{\lambda' \in P_m} \sum_{\gamma = 1}^{N_{m , \lambda'}} \left\Vert e^A s^{(\lambda' )}_{\gamma} \right\Vert^2_{h_0}} d \mu_{\mathrm{FS}(H)} - \frac{g(\lambda / m)}{m N_m \overline{g_m}} \delta_{\alpha \beta} = 0,
\end{equation*}
for all $\alpha , \beta = 1 , \dots , N_{m , \lambda}$ and all $\lambda \in P_m$; note that $A$ preserves the weight decomposition (\ref{eqwtdct}) as $H \in \CB^T_m$. We can also characterise the above as the metric for which the $g$-Bergman function \cite[\S 4.2.1]{BWN14} is constant, by arguing exactly as in Proposition \ref{ppacbal}.

Just as we pointed out for $\mathscr{D}_m$ in \S \ref{scacbalm}, a critical point of $\mathscr{D}^{g}_m$ is necessarily the global minimum over $\CB^T_m$; in fact it attains the global minimum over $\CB_m$, as pointed out in \cite[Proof of Theorem 2.11]{rtz}.

\subsection{Coupled K\"ahler--Einstein metrics and balanced metrics} \label{sccekbm}

Suppose that we have a $k$-tuple of ample $\rtn$-line bundles $L_1 , \dots , L_k$ over a Fano manifold $X$ such that
\begin{equation*}
	- K_X =  L_1 + \cdots +  L_k .
\end{equation*}
We also write $(X , -K_X; L_1 , \dots , L_k)$ for the above data.

Coupled \ke metrics were introduced by Hultgren--Witt Nystr\"om \cite{HWN19} as a generalisation of the \ke metrics which is compatible with the above ``decomposition'' of $-K_X$, and have been actively studied recently. We recall here some basic results established in \cite{HWN19}, and also its relationship to the geometric quantisation as given by Takahashi \cite{tak19}; in fact we will complement it by adding some new materials that were not discussed in \cite{tak19}, which turns out to be important later.

We choose a positive integer $m$ to be sufficiently large and divisible so that each of $-mK_X$, $m L_1 , \dots , m L_k$ is a very ample line bundle and that the natural multiplication map
\begin{equation*}
	H^0 (X , m L_1 ) \otimes \cdots \otimes  H^0 (X , m L_k) \to H^0(X , m L_1 + \cdots + m L_k ) = H^0(X , -mK_X)
\end{equation*}
is surjective (see e.g.~\cite[Example 1.2.22]{LazarsfeldI}). An elementary observation, which follows from taking the dual of the above, is that we have a sequence of embeddings
\begin{equation} \label{mlmp}
	X \inj \prj(H^0 (X , -mK_X)^{\vee}) \inj \prj (H^0 (X , m L_1 )^{\vee} \otimes \cdots \otimes  H^0 (X , m L_k)^{\vee})
\end{equation}
which turns out to be important. We write $\iota_{\mathrm{coupled}}$ for the composition of the two embeddings above. Note that the hyperplane bundle over $\prj (H^0 (X , m L_1 )^{\vee} \otimes \cdots \otimes  H^0 (X , m L_k)^{\vee})$ is pulled back by $\iota_{\mathrm{coupled}}$ to $-mK_X$ since the second embedding is linear.

On the other hand, since $m L_1 , \dots , m L_k$ are very ample, for each $i=1, \dots , k$ we have the Kodaira embedding $\iota_i : X \inj \prj ( H^0 (X , m L_i)^{\vee})$. For each $i$ we fix a reference hermitian form $H_{i,0}$ for $H^0(X , mL_i )$, and define $\tilde{h}_{i,0}$ to be the Fubini--Study metric on $\prj ( H^0 (X , m L_i)^{\vee})$ with respect to $H_{i,0}$, which is pulled back to a hermitian metric $h_{i,0}:=\iota_i^* \tilde{h}_{i,0}$ on $mL_i$ by $\iota_i$, entirely analogously to (\ref{dfhtilde}) in \S \ref{scacbalm}. We write $\theta_i \in c_1 (L_i)$ for the associated K\"ahler metric, scaled by $1/m$ to be in $c_1 (L_i)$. As in (\ref{eqfs}), for a general positive definite hermitian form $H_i$ on $H^0 (X , mL_i)$ we define
\begin{equation} \label{eqfscpdi}
	\mathrm{FS}_i (H_i) := \frac{1}{m} \log \sum_{j=1}^{N_{i,m}} \left\Vert s^{H_i}_j \right\Vert^2_{h_{i,0}},
\end{equation}
where $\{ s^{H_i}_j \}_{j=1}^{N_{i,m}}$ is any $H_i$-orthonormal basis for $H^0 (X , mL_i)$ and we wrote
\begin{equation*}
	N_{i,m} := \dim_{\cx} H^0 (X , m L_i).
\end{equation*}

We pick an auxiliary reference hermitian metric on $-mK_X$ to be
\begin{equation} \label{dfrfchm}
	h'_0 := h_{1,0} \otimes \cdots \otimes h_{k,0},
\end{equation}
with the associated volume form $d \mu'_0$ which we may assume has unit volume over $X$ by re-scaling; a slightly subtle point is that, while this is a metric on $-mK_X$ that is naturally determined by the reference hermitian metrics on $mL_1 , \dots , mL_k$, we need another reference metric $h_0$ on $-K_X$, defined later in (\ref{dfrfchm2}).

With the reference metrics chosen as above, we define the space of ``coupled'' K\"ahler potentials defined as
\begin{equation*}
\bm{\mathcal{H}} := \CH_1 \times \cdots \times \CH_k,
\end{equation*}
where for each $i=1 , \dots , k$ we define
\begin{equation*}
	\CH_i := \{ \phi \in C^{\infty} (X , \rl) \mid \theta_i + \ai \ddbar \phi / 2 \pi >0 \}.
\end{equation*}

\begin{definition}
Suppose $- K_X =  L_1 + \cdots +  L_k$. A $k$-tuple of K\"ahler metrics $\omega_1 , \dots , \omega_k$, $\omega_i \in  c_1 ( L_i)$ for $i=1 , \dots , k$, is said to be \textbf{coupled K\"ahler--Einstein} if
\begin{equation*}
\mathrm{Ric} (\omega_1) = \cdots = \mathrm{Ric} (\omega_k) = \sum_{i=1}^k \omega_i .
\end{equation*}
\end{definition}

Hultgren--Witt Nystr\"om \cite[Proposition 2.8]{HWN19} proved (see also Remark \ref{rmrfmchwn} below) that the above metric is precisely the critical point of the following functional.

\begin{definition}
The \textbf{coupled Ding functional} is a map $\mathscr{D}^{\mathrm{coupled}} : \bm{\CH} \to \rl$ defined by
\begin{equation*}
\mathscr{D}^{\mathrm{coupled}} (\phi_1 , \dots , \phi_k) := \mathscr{L}^{\mathrm{coupled}} (\phi_1 , \dots , \phi_k) - \sum_{i=1}^k \mathscr{E} (\phi_i),
\end{equation*}
where
\begin{equation*}
\mathscr{L}^{\mathrm{coupled}} (\phi_1 , \dots , \phi_k ) := -\log \int_X \exp \left( - \sum_{i=1}^k \phi_i \right) d \mu'_0.
\end{equation*}
\end{definition}
 
Note that Pingali \cite{Pin18} reduced the existence of coupled \ke metrics to a priori $C^0$-estimates.

\begin{remark} \label{rmrfmchwn}
	The choice of the reference metric is in fact a slightly subtle point, as it is used to show that the Euler--Lagrange equation for $\mathscr{D}^{\mathrm{coupled}}$ is exactly the coupled \ke equation \cite[Lemma 2.1 and Proposition 2.8]{HWN19}. In \cite[\S 2.1]{HWN19} the reference hermitian metric on $-K_X$ is chosen to be the volume form $\eta^n$ (normalised to have unit volume over $X$) given by a K\"ahler metric $\eta$ satisfying
	\begin{equation*}
	\mathrm{Ric}(\eta) = \sum_{i=1}^k  \theta_i,
	\end{equation*}
	which exists by Yau's theorem \cite{Yau78}. This is in fact the same as our choice of the reference (\ref{dfrfchm}) since $\eta^n = d \mu'_0$; note that the K\"ahler metric associated to $h'_0$, re-scaled to be in $c_1 (-K_X)$, satisfies
	\begin{equation*}
		- \frac{1}{m} \ai \ddbar \log h'_0 = - \frac{1}{m}  \ai \ddbar \log h_{1,0} \otimes \cdots \otimes h_{k,0} = \sum_{i=1}^k \theta_i,
	\end{equation*}
	which immediately implies $\eta^n = d \mu'_0$ since $\mathrm{Ric}(\eta) = \sum_{i=1}^k  \theta_i$ and both volume forms have unit volume over $X$.
\end{remark}


Just as we did in \S \ref{scacbalm}, for each $i = 1 , \dots , k$ we define the right coset space
\begin{equation*}
\CB_{i,m} :=  U(N_{i,m}) \backslash GL(N_{i,m} , \cx ),
\end{equation*}
which is the set of positive definite hermitian forms on $H^0 (X , m L_i)$. We further set
\begin{equation*}
\bm{\CB}_{m} := \CB_{1, m} \times \cdots \times \CB_{k,m} ,
\end{equation*}
which is a Riemannian symmetric space with the bi-invariant metric, and define $\bm{\CH}_m \subset \bm{\CH}$ by
\begin{equation*}
	\bm{\CH}_m := \mathrm{FS}_1 (\CB_{1,m}) \times \cdots \times \mathrm{FS}_k (\CB_{k,m}) .
\end{equation*}

So far we mostly considered $k$-tuples of Fubini--Study metrics associated to the very ample line bundles $mL_1 , \dots , m L_k$. On the other hand, we have the embedding
\begin{equation*}
	\iota_{\mathrm{coupled}} : X  \inj \prj (H^0 (X , m L_1 )^{\vee} \otimes \cdots \otimes  H^0 (X , m L_k)^{\vee})
\end{equation*}
given by (\ref{mlmp}). Recalling that we fixed a reference hermitian form $H_{i,0} \in \CB_{i,m}$ for each $i=1 , \dots , k$, we can define a reference hermitian form
\begin{equation*}
	\bm{H}_0 := H_{1,0} \otimes \cdots \otimes H_{k,0}
\end{equation*}
on $H^0 (X , m L_1 ) \otimes \cdots \otimes  H^0 (X , m L_k)$. Just as in Remark \ref{rmrfbs}, we fix a reference basis for $H^0 (X , m L_1 ) \otimes \cdots \otimes  H^0 (X , m L_k)$ as an $\bm{H}_0$-orthonormal basis which we write as $\{ \bm{s}_{\bm{j}} \}_{\bm{j}}$; the multi-index $\bm{j}$ here is such that
\begin{equation*}
	\bm{j}:=(j_1 , \dots , j_k) , \quad j_i \in \{ 1 , \dots , N_{i,m}\} \text{ for each } i=1,  \dots , k ,
\end{equation*}
and
\begin{equation} \label{eqcprbsb}
	\bm{s}_{\bm{j}} := s_{j_1} \otimes \cdots \otimes s_{j_k} ,
\end{equation}
where $\{ s_{j_i} \}_{j_i=1}^{N_{i,m}}$ is an $H_{i,0}$-orthonormal basis for $H^0 (X , mL_i)$. Recalling also that the hyperplane bundle is pulled back to $-mK_X$ by $\iota_{\mathrm{coupled}}$, we see that $\bm{H}_0$ defines a hermitian metric $\tilde{h}_0$ on the hyperplane bundle over $\prj (H^0 (X , m L_1 )^{\vee} \otimes \cdots \otimes  H^0 (X , m L_k)^{\vee})$ which defines a hermitian metric
\begin{equation} \label{dfrfchm2}
	h_0 := \iota^*_{\mathrm{coupled}} \tilde{h}_0
\end{equation}
on $-mK_X$, with the associated volume form $d \mu_0$. We take this $h_0$ to be the reference hermitian metric on $-mK_X$. Writing $\omega_0 \in c_1 (-K_X)$ for the associated K\"ahler metric (scaled by $1/m$), we take $\omega_0$ to be the reference K\"ahler metric for the K\"ahler class $c_1 (-K_X)$, and write
\begin{equation*}
	\CH = \{ \phi \in C^{\infty} (X , \rl) \mid \omega_0 + \ai \ddbar \phi / 2 \pi >0 \} ,
\end{equation*}
continuing with the notation so far.

For a general element $(H_1 , \dots , H_k) \in \bm{\CB_m}$, we define $\bm{H} := H_1 \otimes \cdots \otimes H_k$ for the positive definite hermitian form on $H^0 (X , m L_1 ) \otimes \cdots \otimes  H^0 (X , m L_k)$. We also write $\left\{ \bm{s}^{\bm{H}}_{\bm{j}} \right\}_{\bm{j}}$ for the associated orthonormal basis, where
\begin{equation*}
	\bm{s}^{\bm{H}}_{\bm{j}} := s^{H_1}_{j_1} \otimes \cdots \otimes s^{H_k}_{j_k}
\end{equation*}
and $\left\{ s^{H_i}_{j_i} \right\}_{j_i=1}^{N_{i,m}}$ is an $H_{i}$-orthonormal basis for $H^0 (X , mL_i)$. The Fubini--Study metric defined by $\bm{H}$ and $\iota_{\mathrm{coupled}}$ is given by 
\begin{equation} \label{eqfscpd}
	\mathrm{FS} (\bm{H}) = \frac{1}{m} \log \left( \sum_{i=1}^k \sum_{j_i=1}^{N_{i,m}} \left\Vert \bm{s}^{\bm{H}}_{\bm{j}} \right\Vert^2_{h_0} \right) \in \CH ,
\end{equation}
just as in the usual case (\ref{eqfs}).

Thus, we find that a $k$-tuple of hermitian forms $(H_1 , \dots , H_k) \in \bm{\CB_m}$ gives rise to a $k$-tuple of K\"ahler potentials $(\mathrm{FS}_1 (H_1) , \dots , \mathrm{FS}_k (H_k)) \in \bm{\CH}_m$ and also an additional K\"ahler potential $\mathrm{FS} (\bm{H}) \in \CH$. An intriguing relationship satisfied by these metrics is the following lemma, which turns out to be useful later.

\begin{lemma} \label{lmrfmc}
	With the notation as above, we have
	\begin{equation*}
		\exp \left( - {\sum_{i=1}^k \mathrm{FS}_i (H_i)} \right) d \mu'_0 = d \mu_{\mathrm{FS} (\bm{H})}.
	\end{equation*}
	In particular,
	\begin{equation*}
		\mathscr{L}^{\mathrm{coupled}} (\mathrm{FS}_1 (H_1) , \dots , \mathrm{FS}_k (H_k)) = \mathscr{L} (\mathrm{FS} (\bm{H})).
	\end{equation*}
\end{lemma}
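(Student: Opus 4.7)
The plan is to reduce the identity to two elementary statements: that the two a priori distinct reference hermitian metrics $h_0$ and $h'_0$ on $-mK_X$ actually coincide, and that the Fubini--Study potential factors as $\mathrm{FS}(\bm{H}) = \sum_{i=1}^k \mathrm{FS}_i(H_i)$. Granting these, the desired volume-form identity follows at once from (\ref{eqcmvf}) applied at $\phi = \mathrm{FS}(\bm{H})$, and the $\mathscr{L}^{\mathrm{coupled}}$ identity follows by taking $-\log \int_X$ of both sides.

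The heart of the argument is the multiplicative behaviour of orthonormal bases under tensor products. If $\{s_{j_i}^{H_i}\}_{j_i=1}^{N_{i,m}}$ is an $H_i$-orthonormal basis of $H^0(X, mL_i)$ for each $i$, then by construction the tensor products $\bm{s}_{\bm{j}}^{\bm{H}} = s_{j_1}^{H_1} \otimes \cdots \otimes s_{j_k}^{H_k}$ form an $\bm{H}$-orthonormal basis of $H^0(X, mL_1) \otimes \cdots \otimes H^0(X, mL_k)$. For each $i$ the Fubini--Study metric $\exp(-m \mathrm{FS}_i(H_i)) h_{i,0}$ on $mL_i$ is characterised by $\sum_{j_i} \|s_{j_i}^{H_i}\|^2 \equiv 1$ on $X$, so the tensor product metric $\exp(-m \sum_{i=1}^k \mathrm{FS}_i(H_i)) h'_0$ on $-mK_X$ satisfies
\begin{equation*}
\sum_{\bm{j}} \|\bm{s}_{\bm{j}}^{\bm{H}}\|^2 = \prod_{i=1}^k \sum_{j_i} \|s_{j_i}^{H_i}\|^2 \equiv 1,
\end{equation*}
which is precisely the defining property of the pulled-back Fubini--Study metric $\exp(-m\mathrm{FS}(\bm{H})) h_0$. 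By the uniqueness of hermitian metrics on a line bundle satisfying this Bergman-type normalisation, the two must agree.

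Specialising this equality of metrics to $\bm{H} = \bm{H}_0$, where $\mathrm{FS}(\bm{H}_0) = 0$ and each $\mathrm{FS}_i(H_{i,0}) = 0$, immediately yields $h_0 = h'_0$ and hence $d\mu_0 = d\mu'_0$. Feeding this back for arbitrary $\bm{H}$ and taking logarithms then gives $\mathrm{FS}(\bm{H}) = \sum_{i=1}^k \mathrm{FS}_i(H_i)$, from which the lemma follows at once via (\ref{eqcmvf}). The one conceptual point that requires care---and what I expect to be the main source of confusion rather than genuine difficulty---is the coincidence of the reference metrics $h_0$ and $h'_0$, which the notation of the paper treats as a priori distinct; this coincidence is forced simply by the fact that both are Fubini--Study metrics associated to the same hermitian form $\bm{H}_0$, assembled from $\{H_{i,0}\}_{i=1}^k$ either through the coupled embedding $\iota_{\mathrm{coupled}}$ or through the tensor product of the individual embeddings $\iota_i$.
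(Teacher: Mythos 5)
Your proof is correct and rests on the same fundamental observation as the paper's---that the Bergman kernel multiplies under tensor products, $\sum_{\bm{j}} \Vert \bm{s}^{\bm{H}}_{\bm{j}}\Vert^2 = \prod_i \sum_{j_i}\Vert s^{H_i}_{j_i}\Vert^2$---but you frame it differently. The paper writes down $\varphi \in C^{\infty}(X,\rl)$ with $e^{-\varphi}h_0 = h'_0$ (deliberately not committing to $\varphi = 0$), expands both sides via the explicit formulae (\ref{eqfscpdi}) and (\ref{eqfscpd}), and observes that $\varphi$ cancels; you instead invoke the characterisation of $\exp(-m\,\mathrm{FS}(\bm{H}))h_0$ as the unique hermitian metric on $-mK_X$ normalising $\sum_{\bm{j}}\Vert \bm{s}^{\bm{H}}_{\bm{j}}\Vert^2 \equiv 1$ (stated after (\ref{eqfs})), which gives the equality of metrics $\exp(-m\sum_i\mathrm{FS}_i(H_i))h'_0 = \exp(-m\,\mathrm{FS}(\bm{H}))h_0$ in one stroke. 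Specialising to $\bm{H}_0$ then does genuinely establish $h_0 = h'_0$ (equivalently $\varphi \equiv 0$), which is a correct sharpening the paper's proof does not record: it is forced by $\iota_{\mathrm{coupled}}$ factoring through the Segre embedding and the compatibility of Fubini--Study metrics with tensor products. In effect the two proofs compute the same thing, but yours makes explicit the identity $\mathrm{FS}(\bm{H}) = \sum_i\mathrm{FS}_i(H_i)$ and the coincidence of the reference metrics, whereas the paper's is agnostic about $\varphi$ and lets the cancellation do the work---a slightly more defensive but equally valid route.
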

\begin{proof}
First note that there exists $\varphi \in C^{\infty} (X , \rl)$ such that $e^{- \varphi } h_0 = h'_0$, or equivalently $e^{- \varphi /m} d \mu_0 = d \mu'_0$ in terms of the volume form. We then get the claim by the following straightforward computation that follows from the definitions (\ref{eqfscpdi}) and (\ref{eqfscpd}):
\begin{align*}
		\exp \left( - \sum_{i=1}^k \mathrm{FS}_i (H_i) \right) d \mu'_0 &=  \prod_{i=1}^k \left( \sum_{j=1}^{N_{i,m}} \left\Vert  s^{H_i}_{j_i} \right\Vert^2_{h_{i,0}} \right)^{-1/m}  d \mu'_0 \\
		&= \left( \sum_{j_{1} =1}^{N_{1,m}} \cdots \sum_{j_k =1}^{N_{k,m}} \left\Vert  s^{H_i}_{j_{1}} \otimes \cdots \otimes  s^{H_k}_{j_k} \right\Vert^2_{h_{1,0} \otimes \cdots \otimes h_{k,0}} \right)^{-1/m} e^{- \varphi / m } d \mu_0 \\
		&=  \left( \sum_{i=1}^k \sum_{j_{i}=1}^{N_{i,m}} \left\Vert  \bm{s}^{\bm{H}}_{\bm{j}} \right\Vert^2_{h_0} \right)^{-1/m} d \mu_0 \\
		&= \exp \left( - \mathrm{FS} (\bm{H}) \right) d \mu_0 = d \mu_{\mathrm{FS} (\bm{H})} , 
\end{align*}
where in the third line we used $e^{\varphi} h'_0 = e^{\varphi} h_{1,0} \otimes \cdots \otimes h_{k,0} = h_0$ and the last equality is the definitional (\ref{eqcmvf}).
\end{proof}

We can now define the following ``quantised'' functional.

\begin{definition}
The \textbf{quantised coupled Ding functional} $\mathscr{D}^{\mathrm{coupled}}_m : \bm{\CB}_m \to \rl$ is defined for $(H_1 , \dots , H_k) \in \bm{\CB}_{m}$ as
\begin{equation*}
\mathscr{D}^{\mathrm{coupled}}_m (H_1 , \dots , H_k) := \mathscr{L} (\mathrm{FS}(\bm{H})) - \sum_{i=1}^k \mathscr{E}_{i,m} (H_i),
\end{equation*}
where
\begin{equation*}
	\mathscr{E}_{i,m} (H_i) := -  \frac{1}{m N_{i,m}} \log \det (H_i \cdot H^{-1}_{i, 0}) .
\end{equation*}
A \textbf{coupled anticanonically balanced metric} at level $m$ is a critical point of $\mathscr{D}^{\mathrm{coupled}}_m$.
\end{definition}

The original definition given by Takahashi \cite[\S 3.1.2]{tak19} was 
\begin{equation*}
\mathscr{D}^{\mathrm{coupled}}_m (H_1 , \dots , H_k) := \mathscr{L}^{\mathrm{coupled}} (\mathrm{FS}_1 (H_1) , \dots , \mathrm{FS}_k (H_k)) - \sum_{i=1}^k \mathscr{E}_{i,m} (H_i),
\end{equation*}
but Lemma \ref{lmrfmc} ensures that it is exactly the same as the one above. We can also write down the ``zero of the moment map'' definition for the coupled anticanonically balanced metric, but for the detailed discussions we refer the reader to \cite[Proof of Proposition 3.4 and also Definition 3.1]{tak19}; the condition for the Bergman function, analogously to Proposition \ref{ppacbal}, can be established equally easily (cf.~\cite[\S 4.1]{tak19}).

Again as pointed out for $\mathscr{D}_m$ in \S \ref{scacbalm}, a critical point of $\mathscr{D}^{\mathrm{coupled}}_m$ is necessarily the global minimum over $\bm{\CB}_m$.

\section{Algebro-geometric preliminaries} \label{scagprem}

\subsection{Test configurations} \label{sctstcf}

We recall the basics of test configurations that will be used later.

\begin{definition}
	A \textbf{very ample test configuration $(\CX , \CL)$ of exponent $m$} for a Fano manifold $(X , -K_X)$ is a scheme $\CX$ endowed with a flat projective morphism $\pi : \CX \to \cx$, which is $\cx^*$-equivariant with respect to the natural $\cx^*$-action on $\cx$, with a relatively very ample Cartier divisor $\CL$ to which the action $\cx^* \actson \CX$ linearises, such that $\pi^{-1} (1) \cong X $ and $\CL |_{\pi^{-1} (1)} \cong -mK_X$. The preimage of $0 \in \cx$, written $\CX_0 := \pi^{-1} (0)$, is called the \textbf{central fibre}.
	
	We say that $(\CX , \CL)$ is \textbf{product} if $\CX$ is isomorphic to $X \times \cx$, and \textbf{trivial} if it is $\cx^*$-equivariantly isomorphic to $X \times \cx$ (i.e.~$\CX \isom X \times \cx$ and $\cx^*$ acts trivially on $X$).
\end{definition}

\begin{remark}
	It is not common in the literature to assume $\CL$ to be relatively very ample, and often important to consider the case when $\CL$ is merely semiample on $\CX$. In this paper, however, we restrict to very ample test configurations unless otherwise stated, since all test configurations of interest arise as a subscheme of a fixed projective space. The notion of the trivial test configuration as stated above is also rather uncommon recently, but the one above turns out to be the appropriate definition for this paper; see Remark \ref{rmtrtc}.
\end{remark}

The reason for considering very ample test configurations is that we can write down its ``matrix generator'', which is defined as the generator of the one-parameter subgroup associated to the $\cx^*$-action; throughout the text, we decree that a one-parameter subgroup is always algebraic, i.e.~a morphism between algebraic groups. More precisely, we have the following result by Ross--Thomas \cite{rt07} which is important later in connecting differential-geometric and algebro-geometric arguments.

\begin{proposition} \emph{(see \cite[Proposition 3.7]{rt07} and also \cite[\S 2.3]{bhj1})} \label{pprt}
	Let $(\CX , \CL)$ be a very ample test configuration of exponent $m$. Then there exists a one-parameter subgroup $\cx^* \to GL(H^0(X , -mK_X))$, with the generator $A \in \mathfrak{gl} (H^0 (X , -mK_X))$, such that $(\CX , \CL)$ can be realised as the Zariski closure of the orbit of $\iota(X) \subset \prj (H^0 (X , -mK_X)^{\vee} )$ under the $\cx^*$-action given by $A$; in other words $\CX$ is isomorphic to
	\begin{equation} \label{dftca}
		\CX_A := \overline{\{ ( \tau^{A} \cdot \iota (x) , \tau ) \mid x \in X \}_{\tau \in \cx^*}} \subset \prj (H^0 (X , -mK_X)^{\vee} ) \times \cx
	\end{equation}
	where the bar denotes the Zariski closure, $\pi: \CX_A \to \cx$ is the second projection, and the polarisation (denoted by $\CL_A$) is given by the restriction of the hyperplane bundle.
	
	Conversely, the Zariski closure $\CX_A$ as above, with the polarisation $\CL_A$, defines a very ample test configuration of exponent $m$ if $A \in \mathfrak{gl} (H^0 (X , -mK_X))$ is diagonalisable with integral eigenvalues.
\end{proposition}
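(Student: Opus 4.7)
The plan is to extract the one-parameter subgroup from the pushforward $\pi_{*} \CL$. Since $\pi : \CX \to \cx$ is flat and projective and $\CL$ is relatively very ample, cohomology and base change gives that $V := \pi_{*} \CL$ is a locally free $\CO_{\cx}$-module of rank $N_m = h^0(X,-mK_X)$; since $\cx$ is affine, $V$ is in fact free. The $\cx^{*}$-linearisation on $\CL$ and the equivariance of $\pi$ induce a $\cx^{*}$-equivariant structure on $V$ (with $\cx^{*}$ acting on $\cx$ in the standard way). By the standard classification of equivariant locally free sheaves on $\cx$, this is equivalent to an algebraic representation $\rho : \cx^{*} \to GL(V_1) = GL(H^{0}(X, -mK_X))$ on the fibre over $1 \in \cx^{*}$, where the identification of $V_1$ with $H^{0}(X,-mK_X)$ uses the assumed isomorphisms $\pi^{-1}(1) \cong X$ and $\CL|_{\pi^{-1}(1)} \cong -mK_X$. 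Let $A \in \mathfrak{gl}(H^{0}(X,-mK_X))$ be the generator of $\rho$; diagonalisability with integral eigenvalues is automatic since $\rho$ is an algebraic representation of $\cx^{*}$.

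Next I would use $\CL$ to produce the embedding. Because $\CL$ is relatively very ample and $\pi_{*} \CL = V$, the relative evaluation map $\pi^{*} V \twoheadrightarrow \CL$ yields a closed embedding
\begin{equation*}
\CX \hookrightarrow \prj(V^{\vee}) = \prj(H^{0}(X,-mK_X)^{\vee}) \times \cx
\end{equation*}
which, by construction, is $\cx^{*}$-equivariant with respect to $\rho$ on the projective factor and the standard action on $\cx$. Restricting to the fibre over $\tau = 1$ recovers the Kodaira embedding $\iota : X \hookrightarrow \prj(H^{0}(X,-mK_X)^{\vee})$, so by equivariance the fibre over a general $\tau \in \cx^{*}$ is $\tau^{A} \cdot \iota(X)$. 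Since $\CX$ is flat over $\cx$ and the generic fibre is determined, $\CX$ coincides with the Zariski closure of these orbits, giving an isomorphism $\CX \cong \CX_A$ compatible with polarisations.

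For the converse, starting from a diagonalisable $A \in \mathfrak{gl}(H^{0}(X,-mK_X))$ with integral eigenvalues, the $\cx^{*}$-action $\tau \mapsto \tau^{A}$ is well-defined and algebraic, so $\CX_A$ defined in (\ref{dftca}) is a closed subscheme of $\prj(H^{0}(X,-mK_X)^{\vee}) \times \cx$, irreducible and reduced as the closure of the orbit of the irreducible reduced variety $\iota(X) \times \{1\}$. The second projection $\pi : \CX_A \to \cx$ is dominant with all fibres away from $0$ isomorphic to $X$; since $\cx$ is a smooth one-dimensional regular base and $\CX_A$ has no embedded components, $\pi$ is flat. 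The polarisation $\CL_A$ is the pullback of the hyperplane bundle on $\prj(H^{0}(X,-mK_X)^{\vee})$, which is relatively very ample and carries the obvious $\cx^{*}$-linearisation induced by $\rho$, and $\pi^{-1}(1) \cong X$ with $\CL_A|_{\pi^{-1}(1)} \cong -mK_X$ by construction.

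I expect the main technical point to be the forward direction: verifying that the $\cx^{*}$-action on $(\CX,\CL)$ really does descend to an algebraic one-parameter subgroup on $H^{0}(X,-mK_X)$ in a way compatible with the linearisation, and then checking that the resulting equivariant Kodaira-type embedding identifies $\CX$ with the closure $\CX_A$. The converse is essentially a direct verification, with flatness over the one-dimensional regular base $\cx$ being the only point that requires comment; everything else (relative very ampleness, the isomorphism of the fibre over $1$, the linearisation) is immediate from the construction.
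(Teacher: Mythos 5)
Your proof is essentially correct and reproduces the approach of Ross--Thomas \cite[Proposition 3.7]{rt07}, which is exactly the argument the paper defers to by citation (the paper does not give its own proof of this proposition). Your outline --- pushing forward $\CL$ to a free, $\cx^*$-equivariant sheaf $V = \pi_*\CL$ on $\cx$, extracting an algebraic one-parameter subgroup from the equivariant structure, and using the relative Kodaira embedding $\CX \hookrightarrow \prj(V^\vee) \cong \prj(H^0(X,-mK_X)^\vee)\times\cx$ to identify $\CX$ with the orbit closure $\CX_A$ --- is the standard one. Two small imprecisions worth noting. First, the identification $\prj(V^\vee)\cong\prj(H^0(X,-mK_X)^\vee)\times\cx$ must be made via an \emph{equivariant} trivialisation (a graded basis of $\Gamma(\cx,V)$ as a $\cx[\tau]$-module) so that the $\cx^*$-action becomes the product of the linear representation $\rho$ and the scaling on $\cx$; you flag this as the delicate point, and indeed it is the content of the Rees-construction step. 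Second, in the converse direction, the justification for flatness should be that $\CX_A$ is \emph{integral} and dominates $\cx$ (a dominant morphism from an integral scheme to a Dedekind scheme is flat, being torsion-free over the local DVRs); ``no embedded components'' alone is not quite the right criterion in general, although it is harmless here because $\CX_A$ is irreducible by construction.
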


Note that the endomorphism $A \in \mathfrak{gl} (H^0 (X , -mK_X))$ in (\ref{dftca}) is exactly the generator of the $\cx^*$-action on $H^0 (\CX_0 , \CL|_{\CX_0})$, as we can see from the proof of \cite[Proposition 3.7]{rt07} (see also \cite[Proposition 1.3 and \S 2.3]{bhj1}). Note also that $A$ is required to be diagonalisable with integral eigenvalues so that the one-parameter subgroup $\cx^* \ni \tau \mapsto \tau^{A} \in GL( H^0 (X , -mK_X)^{\vee})$ is a morphism of algebraic groups (rather than complex Lie groups), which is necessary for us to stay in the category of varieties and schemes.

Finally, we note that we can compactify a test configuration to form a family over $\prj^1$ (see also \cite[\S 2.2]{bhj1}).

\begin{definition}
	Let $(\CX , \CL)$ be a very ample test configuration for a Fano manifold $(X , -K_X)$. The \textbf{compactification} $\bar{\CX}$ of $\CX$ is defined by gluing together $\CX$ and $X \times (\prj^1 \setminus \{ 0 \})$ along their respective open subsets $\CX \setminus \CX_0$ and $X \times (\mathbb{C} \setminus \{ 0 \} )$, identified by the canonical $\cx^*$-equivariant isomorphism $\CX \setminus \CX_0 \isom X \times (\mathbb{C} \setminus \{ 0 \} )$. The line bundle $\bar{\CL}$ is a natural line bundle over $\bar{\CX}$ constructed from $\CL$ and the procedure for the compactification as above.
\end{definition}

\subsection{Ding invariant and Chow weight} \label{scdstcst}

We collect some definitions that are standard in the literature; see \cite[Definition 3.1]{Fujita18}, \cite[Definition 2.3]{Fujita2019}, and also \cite[\S 3]{Berman16} for more details on the following.


\begin{definition} \label{dfding}
	Let $(\CX , \CL)$ be a very ample test configuration for $(X , -K_X)$ of exponent $m$, and $\nu : \CX^{\nu} \to \CX$ be its normalisation with $\CL^{\nu} := \nu^* \CL$, whose compactification over $\prj^1$ is written as $(\bar{\CX}^{\nu} , \bar{\CL}^{\nu})$. Let $D_{(\CX^{\nu} , \CL^{\nu})}$ be a $\rtn$-divisor on $\CX^{\nu}$, whose support is contained in $\CX^{\nu}_0 := (\nu \circ \pi )^{-1} (0)$, such that $-m (K_{\bar{\CX}^{\nu} / \prj^1 } + D_{(\CX^{\nu} , \CL^{\nu})})$ is a Cartier divisor corresponding to $\bar{\CL}^{\nu}$; it is well-known that such a $\rtn$-divisor $D_{(\CX^{\nu} , \CL^{\nu})}$ exists uniquely. The \textbf{Ding invariant} of $(\CX , \CL)$ is a real number defined by
	 \begin{equation*}
	 	\mathrm{Ding} (\CX , \CL) := - \frac{(\bar{\CL}^{\nu})^{n+1}}{(n+1) m^{n+1} \mathrm{Vol}(X) } - 1 + \mathrm{lct} (\CX^{\nu} , D_{(\CX^{\nu} , \CL^{\nu})} ; \CX^{\nu}_0),
	 \end{equation*}
	 where $(\bar{\CL}^{\nu})^{n+1}$ stands for the intersection product over $\bar{\CX}^{\nu}$, and $\mathrm{lct} (\CX^{\nu} , D_{(\CX^{\nu} , \CL^{\nu})} ; \CX^{\nu}_0)$ is the log canonical threshold of $\CX^{\nu}_0$ with respect to $(\CX^{\nu} , D_{(\CX^{\nu} , \CL^{\nu})})$, as in Definition \ref{dflct}, noting that $\CX_0^{\nu}$ is an effective Cartier divisor on $\CX^{\nu}$ by the flatness of $\pi$.
\end{definition}

Note that there are several alternative definitions for the Ding invariant, including the version which is given in terms of the infimum over valuations; see \cite{BBJ,bhj1,bhj2} for more details.

\begin{definition} \label{dfchowwt}
	Let $(\CX , \CL)$ be a very ample test configuration for $(X , -K_X)$ of exponent $m$, which we realise as the Zariski closure of the orbit of $\iota(X) \subset \prj (H^0 (X , -mK_X)^{\vee} )$ under the $\cx^*$-action generated by $A \in \mathfrak{gl} (H^0 (X , -mK_X))$ by Proposition \ref{pprt}. The \textbf{Chow weight} is a rational number defined by
	\begin{equation*}
		\mathrm{Chow}_m (\CX , \CL) :=  \frac{(\bar{\CL}^{\nu})^{n+1}}{(n+1) m^{n+1} \mathrm{Vol}(X)} - \frac{\mathrm{tr} (A+A^*)}{m h^0 (X , -mK_X)} .
	\end{equation*}
\end{definition}

Ding polystability and Chow polystability can be defined by the nonnegativity of the Ding invariant and the Chow weight respectively, with equality if and only if the test configuration is product. We decide not to elaborate on the details since they will not be used in what follows.

\begin{remark} \label{rmdfdgch}
	Both these invariants are tightly connected to the Donaldson--Futaki invariant \cite[\S 2.1]{dontoric}. Indeed, Berman \cite[\S 3.1 and Corollary 3.9]{Berman16} proved that the Ding invariant agrees with the Donaldson--Futaki invariant for special test configurations. It is also well-known (see \cite{dontoric,rt07}) that the limit of the Chow weight is the Donaldson--Futaki invariant in the sense that
\begin{equation*}
	\mathrm{DF} (\CX , \CL) = \lim_{k \to \infty} mk \cdot \mathrm{Chow}_{mk} (\CX , k \CL) , 
\end{equation*}
where we note that $(\CX , k \CL)$ is a very ample test configuration for $(X , -K_X)$ of exponent $mk$, if $(\CX , \CL)$ is of exponent $m$.
\end{remark}


We recall the following stability condition defined by Saito--Takahashi \cite[Definition 3.4]{st19}.

\begin{definition} \label{dffstst}
A Fano manifold $(X , -K_X)$ is said to be \textbf{$F$-semistable} at level $m$ if for any very ample test configuration $(\CX , \CL)$ of exponent $m$ for $(X,-K_X)$ we have
\begin{equation*}
\mathrm{Ding} (\CX , \CL ) + \mathrm{Chow}_m  (\CX , \CL) \ge 0.
\end{equation*}
$(X, -K_X)$ is \textbf{$F$-stable} if it is $F$-semistable and the equality holds if and only if $(\CX , \CL)$ is trivial, and \textbf{$F$-polystable} if it is $F$-semistable with equality if and only if $(\CX , \CL)$ is product.
\end{definition}

\begin{remark} \label{rmtrtc}
	In the above, we recall that a test configuration $( \CX , \CL)$ was defined to be trivial if it is $\cx^*$-equivariantly isomorphic to $(X, -K_X) \times \cx$; note that, in terms of matrix generators, this amounts to saying that $(\CX , \CL) = ( \CX_A , \CL_A)$ with $A$ being a constant multiple of the identity matrix.
	
	Recall that this is not exactly the same as requiring $J^{\mathrm{NA}} ( \CX , \CL ) = 0$ (or equivalently the minimum norm being zero), which is another widely used definition for the trivial test configurations, because of the phenomenon first observed by Li--Xu \cite{lx14}; see \cite{bhj1,dertwisted} for more details.
\end{remark}

Definition \ref{dffstst} is slightly different from the one defined in \cite[Definition 3.4]{st19}, where the Donaldson--Futaki invariant, instead of the Ding invariant, was used. Note also that the invariant $\mathrm{Ding} (\CX , \CL ) + \mathrm{Chow}_m  (\CX , \CL)$ agrees with the quantised Futaki invariant introduced by Berman--Witt Nystr\"om \cite[\S 4.4]{BWN14} if $(\CX, \CL)$ is a special test configuration; this is proved in \cite[Lemma 3.2]{st19}, where we recall \cite{Berman16} that the Ding invariant agrees with the Donaldson--Futaki invariant for the special test configurations (Remark \ref{rmdfdgch}). Finally, while the Ding invariant is more inherently defined for the non-Archimedean metrics \cite[\S 7.7]{bhj1}, and hence its value unchanged under the normalisation, the Chow weight and the Donaldson--Futaki invariant are not. It is well-known, however, that these invariants decrease under the normalisation; see \cite[Proposition 5.1]{rt07} for the Chow weight and \cite[Proposition 3.15]{bhj1} for the Donaldson--Futaki invariant. This point is also mentioned in \cite[Lemma 3.5]{st19}.

\begin{remark} \label{rmtsliv}
	All the invariants above are \textit{translation invariant}, in the sense that they remain unchanged when the linearisation of the $\cx^*$-action on $\CL$ is twisted by the character $\tau^c$ for some $c \in \itg$; note that the twist of the test configuration $(\CX , \CL)$ by $\tau^c$ is $(\CX , \CL + c \CX_0)$ \cite[page 763]{bhj1}. In the formalism of Proposition \ref{pprt}, this amounts to saying that the invariants for $(\CX_A , \CL_A)$ remain unchanged when we replace $A$ by $A + c \cdot \mathrm{id}_{N_m}$.
\end{remark}

\subsection{K\"ahler--Ricci $g$-solitons case}

Stability conditions for the K\"ahler--Ricci $g$-solitons must be modified appropriately, as we describe below. Let $T^{\cx}$ be an algebraic torus of automorphisms. We first need the definition below, following \cite[Definition 2.1]{sze2007}.

\begin{definition}
	Let $T^{\cx}$ be an algebraic torus contained in $\mathrm{Aut}_0 (X)$. A very ample test configuration $(\CX , \CL)$ for a Fano manifold $(X , -K_X)$ is said to be \textbf{$T^{\cx}$-equivariant} if the action $T^{\cx} \actson \pi^{-1} (1) = (X , -K_X)$ extends to $(\CX , \CL)$ in such a way that it commutes with the $\cx^*$-action of $(\CX , \CL)$.
\end{definition}

We observe the following, by noting that $\iota (X)$ is not contained in any proper linear subspace of $\prj (H^0 (X , -mK_X)^{\vee})$.

\begin{lemma} \label{lmtccmtrs}
	Let $(\CX , \CL)$ be a very ample test configuration for $(X , -K_X)$ of exponent $m$ which we can write as $(\CX_A , \CL_A)$ with some $A \in \mathfrak{gl} (H^0 (X , -mK_X))$ as in Proposition \ref{pprt}. Suppose that we identify $T^{\cx} \subset \mathrm{Aut}_0 (X)$ with a subgroup of $GL(H^0 (X , -mK_X))$ by (\ref{eqautffebd}). Then, $(\CX , \CL)$ is $T^{\cx}$-equivariant if and only if $A$ commutes with all elements in $T^{\cx}$.
\end{lemma}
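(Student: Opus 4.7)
The plan is to treat the two directions separately. For the sufficient direction, assume $A \theta(g) = \theta(g) A$ for every $g \in T^{\cx}$, so that $\theta(g)$ commutes with $\tau^A$ for every $\tau \in \cx^*$. One may then equip $\prj(H^0(X,-mK_X)^{\vee}) \times \cx$ with a commuting joint action of $T^{\cx} \times \cx^*$, where $T^{\cx}$ acts on the first factor via $\theta$ and trivially on the second, and $\cx^*$ acts via $(v,s) \mapsto (\tau^A v, \tau s)$ as in (\ref{dftca}). The equivariance $\theta(g) \iota(x) = \iota(g \cdot x)$ shows that $\iota(X) \times \{ 1 \}$ is $T^{\cx}$-stable, and commutation propagates this to the $\cx^*$-orbit $\{ (\tau^A \iota(x), \tau) \mid x \in X \}_{\tau \in \cx^*}$; taking Zariski closure, $\CX_A$ is $T^{\cx}$-equivariant, and the induced action on $\pi^{-1}(1)$ recovers the original one on $X$.

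For the converse, assume $(\CX_A, \CL_A)$ is $T^{\cx}$-equivariant. Both the $T^{\cx}$- and $\cx^*$-actions preserve the central fibre $\CX_0$ (the former because $T^{\cx}$ acts fibrewise, the latter because $0 \in \cx$ is $\cx^*$-fixed), and linearise on $\CL_A|_{\CX_0}$ to produce commuting representations on $H^0(\CX_0, \CL_A|_{\CX_0})$. Using flatness of $\pi_* \bar{\CL}_A$ together with the linear non-degeneracy of $\iota(X) \subset \prj(H^0(X,-mK_X)^{\vee})$, which makes the restriction $H^0(X,-mK_X) \to H^0(\CX_0, \CL_A|_{\CX_0})$ a canonical isomorphism, one obtains commuting representations on $H^0(X,-mK_X)$: the $\cx^*$-one is generated by $A$ by Proposition \ref{pprt}, while the $T^{\cx}$-one coincides with $\theta$ up to a character of $T^{\cx}$, because $T^{\cx}$ acts trivially on the base $\cx$ and therefore the fibrewise $T^{\cx}$-representation on $\pi_* \bar{\CL}_A$ is constant in $\tau$, matching $\theta$ on the fibre over $1 \in \cx$.

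The commutation of these actions on projective space lifts to a scalar relation in $GL(H^0(X,-mK_X))$:
\begin{equation*}
\theta(g)\, \tau^A\, \theta(g)^{-1} = c(g, \tau)\, \tau^A
\end{equation*}
for some $c(g, \tau) \in \cx^*$, where twisting $\theta$ by a character of $T^{\cx}$ has no effect. Taking determinants yields $c(g, \tau)^{N_m} = 1$; since $c(g, \cdot): \cx^* \to \cx^*$ is continuous with values in the finite group of $N_m$-th roots of unity, and $c(g, 1) = 1$, it is identically $1$. Hence $\theta(g)$ commutes with $\tau^A$ for every $\tau \in \cx^*$, and differentiating at $\tau = 1$ gives $[\theta(g), A] = 0$, as required. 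The main delicate point in this plan is the canonical identification of the central-fibre representations with $\theta$ and $A$, which is precisely where the non-degeneracy of $\iota(X)$ enters.
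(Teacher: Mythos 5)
Your forward implication is correct and efficient: once $A$ commutes with $\theta(T^{\cx})$, the joint $T^{\cx}\times\cx^*$-action on $\prj(H^0(X,-mK_X)^{\vee})\times\cx$ preserves the $\cx^*$-orbit closure $\CX_A$ and supplies the linearisation on $\CL_A$ via $\theta$.

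For the converse there is a genuine gap at the claim that the fibrewise $T^{\cx}$-representation on $\pi_*\CL_A$ is constant in $\tau$. Triviality of the $T^{\cx}$-action on the base $\cx$ does not imply this. Under the canonical trivialisation $\pi_*\CL_A\cong H^0(X,-mK_X)\otimes\CO_{\cx}$ given by the restriction maps --- the isomorphism you correctly single out using flatness and non-degeneracy of $\iota(X)$ --- the $T^{\cx}$-representation on the fibre over $\tau\neq 0$ works out to be $\rho_\tau(g)=\tau^{A}\theta(g)\tau^{-A}$, a $\tau$-dependent conjugate of $\theta$, not $\theta$ itself. What reductivity of $T^{\cx}$ together with triviality of the base action does give is that the \emph{character} of $\rho_\tau$ is independent of $\tau$, i.e.\ the weight multiplicities are constant; it does not pin down the weight sub-bundles, which in general rotate with $\tau$. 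Requiring the action to extend over $\tau=0$ only asks that $\lim_{\tau\to 0}\tau^{A}\theta(g)\tau^{-A}$ exist for each $g$, which is equivalent to $\theta(T^{\cx})$ lying in the parabolic of $GL(H^0(X,-mK_X))$ attached to the eigenspace filtration of $A$; that is a priori strictly weaker than $\theta(T^{\cx})$ lying in the Levi, i.e.\ commuting with $A$. Your closing step --- the scalar relation $\theta(g)\tau^{A}\theta(g)^{-1}=c(g,\tau)\tau^{A}$, the determinant, and the roots-of-unity trick --- is valid, but it already presupposes that the $T^{\cx}$-action on the ambient $\prj(H^0(X,-mK_X)^{\vee})\times\cx$ preserving $\CX_A$ is the $\tau$-independent action $\theta^{\vee}$. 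That presupposition is precisely the content of the missing ``constant representation'' step, so the argument becomes circular at that point. The paper only gestures at the non-degeneracy of $\iota(X)$ and does not spell out a proof, so a direct comparison is difficult; but as written, your converse direction is not established, and closing it would require an argument ruling out the parabolic-but-not-Levi configurations, not merely the observation that $T^{\cx}$ acts trivially on the base.
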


The version of $F$-stability for the K\"ahler--Ricci $g$-solitons is as follows.

\begin{definition}
	Let $(\CX , \CL)$ be a $T^{\cx}$-equivariant very ample test configuration for $(X , -K_X)$ of exponent $m$, which we realise as the Zariski closure of the $\cx^*$-orbit of $\iota(X) \subset \prj (H^0 (X , -mK_X)^{\vee})$ generated by $A \in \mathfrak{gl} (H^0 (X , -mK_X))$ that commutes with $T^{\cx}$ as in the above lemma. We define an invariant
	\begin{equation*}
		\mathscr{D}^{g , \mathrm{NA}}_m (\CX , \CL) := - 1 + \mathrm{lct} (\CX^{\nu} , D_{(\CX^{\nu} , \CL^{\nu})} ; \CX^{\nu}_0) - \frac{1}{m N_m \overline{g_m}} \sum_{\lambda \in P_m} g ( \lambda / m) \mathrm{tr} \left( (A+A^*)|_{R_{m , \lambda}} \right)  .
	\end{equation*}
	We say that $(X,-K_X)$ is \textbf{$F$-semistable in the sense of $g$-solitons} at level $m$ if for any $T^{\cx}$-equivariant very ample test configuration $(\CX , \CL)$ of exponent $m$ for $(X,-K_X)$ we have
\begin{equation*}
\mathscr{D}^{g , \mathrm{NA}}_m (\CX , \CL) \ge 0.
\end{equation*}
$F$-stability and $F$-polystability in the sense of $g$-solitons can be defined entirely analogously to Definition \ref{dffstst}.
\end{definition}

Note that the second term of $\mathscr{D}^{g , \mathrm{NA}}_m$ is precisely the slope of $-\mathscr{E}^g_m$ along Bergman geodesic rays by (\ref{eqslegm}).

\begin{remark}
	The following point was communicated to the author by Ryosuke Takahashi. Han--Li \cite[Definition 5.3]{HanLi20} defined an invariant $\mathscr{E}^{g , \mathrm{NA}} (\CX , \CL)$ for any very ample test configuration (more precisely they defined it for any positive non-Archimedean metric). Using this invariant and defining
	\begin{equation*}
		\mathrm{Ding}^g (\CX , \CL) :=  - 1 + \mathrm{lct} (\CX^{\nu} , D_{(\CX^{\nu} , \CL^{\nu})} ; \CX^{\nu}_0) - \mathscr{E}^{g , \mathrm{NA}} (\CX , \CL),
	\end{equation*}
	and
	\begin{equation*}
		\mathrm{Chow}^g_m (\CX , \CL) := \mathscr{E}^{g , \mathrm{NA}} (\CX , \CL) - \frac{1}{m N_m \overline{g_m}} \sum_{\lambda \in P_m} g ( \lambda / m) \mathrm{tr} \left( (A+A^*)|_{R_{m , \lambda}} \right),
	\end{equation*}
	for a very ample test configuration $(\CX , \CL)$ of exponent $m$, just as we defined the Chow weight in Definition \ref{dfchowwt}, we may write
	\begin{equation*}
		\mathscr{D}^{g , \mathrm{NA}}_m (\CX , \CL) = \mathrm{Ding}^g (\CX , \CL) + \mathrm{Chow}^g_m (\CX , \CL)
	\end{equation*}
	analogously to the strategy in \cite{st19}. This could be a more natural invariant since $\mathscr{E}^g_m$ is related to the ``quantisation'' of $\mathscr{E}^g$ (defined in (\ref{eqgmaeg})) by \cite[Proposition 4.5]{BWN14}, and $\mathscr{E}^{g , \mathrm{NA}}$ can be identified with the asymptotic slope of the functional $\mathscr{E}^g$ as
	\begin{equation*}
		\lim_{t \to + \infty} \frac{\mathscr{E}^g (\mathrm{FS (H_t)})}{t} = \mathscr{E}^{g , \mathrm{NA}} (\CX_A , \CL_A)
	\end{equation*}
	for the Bergman geodesic ray, by \cite[Proposition 5.8]{HanLi20}, analogously to the results in Theorem \ref{thasfed} which we review later.
\end{remark}

\subsection{Coupled Ding invariant} \label{sccdinv}

For the coupled \ke metrics, the relevant stability condition was introduced by Hultgren--Witt Nystr\"om \cite[Definition 1.14]{HWN19}. It was adapted to the balanced metrics by Takahashi \cite[Definition 3.10]{tak19} who defined the coupled version of the $F$-polystability. Takahashi's definition, however, does not seem strong enough to be equivalent to the existence of coupled anticanonically balanced metrics, as suggested by the argument in \S \ref{sceckem}. Thus we define a more stringent version of stability, which turns out to involve modifying the ``coupled'' test configurations defined by Hultgren--Witt Nystr\"om \cite[Definition 1.11]{HWN19}; see Remark \ref{rmctcod} for more details on the comparison with their original definition.

\begin{definition} \label{dftcgcp}
	Let $(\CX_i , \CL_i)$ be a very ample test configuration for $(X , L_i)$ of exponent $m$, for $i=1 , \dots , k$, defined as the Zariski closure of $\iota_i (X) \subset \prj(H^0 (X , m L_i)^{\vee})$ under the one-parameter subgroup $\tau^{A_i }$ generated by $A_i \in \mathfrak{gl} (H^0 (X, m L_i))$, as in Proposition \ref{pprt}. Recall also that we have the embedding
	\begin{equation*}
		\iota_{\mathrm{coupled}} : X \inj \prj (H^0 (X , m L_1 )^{\vee} \otimes \cdots \otimes  H^0 (X , m L_k)^{\vee}) =: \prj
	\end{equation*}
	by (\ref{mlmp}). We say that a very ample test configuration $(\CY , \CL_{\CY})$ is \textbf{generated by the $\cx^*$-actions of $(\CX_i , \CL_i)_{i=1}^k$}, if $\CY$ is defined as the Zariski closure of $\iota_{\mathrm{coupled}} (X)$ in $\prj$, with the reduced scheme structure, under the natural (dual) tensor product action of the one-parameter subgroup
	\begin{equation*}
	\tau^{A_1} \otimes \cdots \otimes \tau^{A_k }
	\end{equation*}
	on $H^0 (X , m L_1 )^{\vee} \otimes \cdots \otimes  H^0 (X , m L_k)^{\vee}$, and $\CL_{\CY} := \CO_{\prj} (1)|_{\CY}$. Note that $(\CY , \CL_{\CY})$ is a very ample test configuration for $(X , -K_X)$ of exponent $m$ since $- m K_X = \iota^*_{\mathrm{coupled}} \CO_{\prj} (1)$.
\end{definition}

The author believes that there should be a better way of formulating the above test configuration $(\CY , \CL_{\CY})$, hopefully in terms of non-Archimedean metrics. For example, it would be good if we can define what it means for a non-Archimedean metric $\phi^{\mathrm{NA}}_{\CY}$ on $-K_X$ to be generated by the $\cx^*$-actions of non-Archimedean metrics $\phi^{\mathrm{NA}}_1 , \dots , \phi^{\mathrm{NA}}_k$ on $L_1 , \dots , L_k$. This does not seem obvious at all from the above definition, since a priori $\CY$ depends on a particular choice of very ample test configurations and even their exponents: $\CY$ may change when we change $(\CX_i , \CL_i)$ to $(\CX_i , m' \CL_i)$ for $m' \in \mathbb{N}$ and for each $i=1 , \dots , k$. Moreover, $ (\CX_1 , \CL_1) , \dots , (\CX_k , \CL_k )$ all being product does not seem to necessarily imply that $(\CY , \CL_{\CY})$ is product unless these test configurations are all generated by the same holomorphic vector field, while it is easy to see that $(\CY , \CL_{\CY})$ is trivial if and only if $ (\CX_1 , \CL_1) , \dots , (\CX_k , \CL_k )$ are all trivial (in the sense of Remark \ref{rmtrtc}). Still, Definition \ref{dftcgcp} suffices for the purposes in this paper in which all test configurations arise as a subscheme of a fixed projective space.

\begin{definition} \label{dfcpdginv}
	Suppose that $(\CX_1 , \CL_1), \dots , (\CX_k , \CL_k )$ is a $k$-tuple of very ample test configurations, respectively for $(X, L_1) , \dots , (X , L_k)$, each of exponent $m$. Let $(\CY , \CL_{\CY})$ be a very ample test configuration generated by the $\cx^*$-actions of $(\CX_i , \CL_i)_{i=1}^k$. Let $\nu : \CY^{\nu} \to \CY$ be the normalisation of $\CY$ and $\CL_{\CY^{\nu}} := \nu^* \CL_{\CY}$. We define the \textbf{coupled Ding invariant} by
	\begin{equation*}
		\mathrm{Ding} \left( (\CX_i , \CL_i)_{i=1}^k \right) := - \sum_{i=1}^k \frac{(\bar{\CL}_i)^{n+1}}{(n+1) m^{n+1} \int_X c_1 (L_i)^n} - 1 + \mathrm{lct} (\CY^{\nu} , D_{\CY^{\nu} , \CL_{\CY^{\nu}}} ; \CY^{\nu}_0),
	\end{equation*}
	where $\mathrm{lct} (\CY^{\nu} , D_{(\CY^{\nu} , \CL_{\CY^{\nu}})} ; \CY^{\nu}_0)$ is the log canonical threshold of $\CY^{\nu}_0$ with respect to $(\CY^{\nu} , D_{(\CY^{\nu} , \CL_{\CY^{\nu}})})$, which is defined exactly as in Definitions \ref{dfding} and \ref{dflct}.
\end{definition}

It turns out that this invariant arises as the asymptotic slope of the coupled Ding functional (Theorem \ref{ppcdgas}). It is natural to define a stability condition as follows.

\begin{definition} \label{dfuncds}
	A Fano manifold $(X, -K_X)$ with the decomposition $-K_X = L_1 + \cdots + L_k$ is said to be \textbf{coupled Ding stable} if for any $m \in \mathbb{N}$ and any $k$-tuple of very ample test configurations $(\CX_1 , \CL_1), \dots , (\CX_k , \CL_k )$ respectively for $(X, L_1) , \dots , (X , L_k)$, each of exponent $m$, we have
	\begin{equation*}
		\mathrm{Ding} \left( (\CX_i , \CL_i)_{i=1}^k \right) \ge 0
	\end{equation*}
	with equality if and only if $(\CX_i , \CL_i)$ is trivial for each $i=1 , \dots , k$.
\end{definition}

The appropriate condition for the equality case could be $J^{\mathrm{NA}} (\CX_i , \CL_i) = 0$ for all $i=1 , \dots , k$ (cf.~Remark \ref{rmtrtc}), or may even be $J^{\mathrm{NA}} (\CY , \CL_{\CY}) = 0$, but we decide not to discuss this stability condition any further as it will not be used in the rest of this paper. The one that we use in this paper is its $F$-stability version, which will be stated later in Theorem \ref{thckebm} and Corollary \ref{crcpdm}.

\begin{remark} \label{rmctcod}
	In the original definition \cite[Definition 1.10]{HWN19} by Hultgren--Witt Nystr\"om (and also its $F$-stability version of Takahashi \cite[Definition 3.10]{tak19}), they further assume $\CY = \CX_1 = \cdots = \CX_k$ and $\CL_{\CY} = \sum_{i=1}^k \CL_i$ as a definition of the coupled test configurations, which may be too stringent (hence leading to a weaker stability condition) as suggested by the computation for the balanced metrics that we provide in \S \ref{sceckem}. They also assume that $\CY = \CX_1 = \cdots = \CX_k$ is a normal $\rtn$-Gorenstein variety and consider the $K$-stability as opposed to the Ding stability, but this seems to be a minor difference. While the author believes that the stability condition in Definition \ref{dfuncds}, or its appropriately modified version, is a useful one in studying the coupled \ke metrics, it is important to note that none of the results in \cite{DelHul,FutZha2019,FutZha2020,Hultgren,HWN19,Nakamura,tak19} seems to be affected when we adopt Definition \ref{dfuncds} as the relevant notion of stability. The works \cite{DelHul,FutZha2019,FutZha2020,Hultgren,Nakamura}, from the point of view of stability, essentially consider the case when $(\CX_1 , \CL_1),  \dots , (\CX_k , \CL_k )$ are all product and generated by the same holomorphic vector field, and hence we have $\CY = \CX_1 = \cdots = \CX_k = X \times \cx$; \cite{DelHul,Hultgren} also consider multiple holomorphic vector fields but in this case the equation in question seems different. Definition \ref{dfuncds} does not affect the result in \cite{HWN19} (resp.~\cite{tak19}) which proves that the existence of coupled \ke metrics implies their weaker version of coupled $K$-stability (resp.~its $F$-stability version \cite[\S 3.2]{tak19}). It also seems interesting to compare the above definition with the version for the constant scalar curvature K\"ahler metrics in \cite{DatPin}.
\end{remark}

Given the above definition for the coupled Ding invariant, it is natural to consider the uniform coupled Ding stability, following the definition of \cite{bhj1,dertwisted} (and also \cite{szethesis}), and the $G$-uniform coupled Ding stability when the automorphism group is nondiscrete \cite{Hisamotosl,Li19G}. Moreover, given recent results for the K\"ahler--Einstein metrics and the K\"ahler--Ricci $g$-solitons \cite{BBJ,HanLi20,Hisamotomab,Li19G}, it seems natural to expect that the $G$-uniform coupled Ding stability is equivalent to the existence of coupled K\"ahler--Einstein metrics. We decide not to discuss these problems any further in this paper; related results may appear elsewhere.

\section{Psh rays, convexity, and the slope formulae} \label{scprcsf}

We now recall several foundational results that connect the asymptotic slope of the functionals $\mathscr{D}, \mathscr{L}, \mathscr{E}$ to the algebro-geometric invariants defined above. We start by recalling the following facts that are surely well-known to the experts.

\begin{lemma} \label{lmbgnam}
The following hold for the Bergman geodesic ray (\ref{eqfst}).
\begin{enumerate}
\renewcommand{\labelenumi}{(\roman{enumi})}
	\item The Bergman geodesic ray is a psh (plurisubharmonic) ray of linear growth.
	\item Suppose that $A \in \mathfrak{gl} (H^0 (X , -mK_X))$ in (\ref{eqfst}) is $H_0$-hermitian with integral eigenvalues. Writing $\lambda_{\mathrm{min}} \in \mathbb{Z}$ for the minimum eigenvalue of $A$, the Bergman geodesic ray defined by $A' := A - \lambda_{\mathrm{min}} \cdot \mathrm{id}_{N_m}$ admits $\phi_{A'}^{\mathrm{NA}}$ as non-Archimedean limit, where $\phi_{A'}^{\mathrm{NA}} \in \CH^{\mathrm{NA}}$ is the positive non-Archimedean metric on $-K_X$ represented by the very ample test configuration $(\CX_{A'} , \CL_{A'})$ of exponent $m$.
\end{enumerate}
\end{lemma}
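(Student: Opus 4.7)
The plan for (i) is to use that $A$ is $H_0$-Hermitian to choose the reference basis $\{s_i\}_{i=1}^{N_m}$ to consist of $H_0$-orthonormal eigenvectors of $A$ with real eigenvalues $\lambda_1, \ldots, \lambda_{N_m}$, simplifying (\ref{eqfst}) to
\begin{equation*}
\mathrm{FS}(H_t)(x) = \frac{1}{m} \log \sum_{i=1}^{N_m} e^{2t\lambda_i} \|s_i\|^2_{h_0}(x).
\end{equation*}
Linear growth then follows by bracketing each summand: $\frac{2t\lambda_{\min}}{m} + O(1) \le \mathrm{FS}(H_t) \le \frac{2t\lambda_{\max}}{m} + O(1)$ uniformly in $x \in X$. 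For the psh property, I would realise $\Phi(x,\tau) := \mathrm{FS}(H_{-\log|\tau|})(x)$ as the pullback to $X \times \cx^*$ of the smooth psh Fubini--Study metric on $\CO(1) \to \prj(H^0(X, -mK_X)^\vee)$ along $(x,\tau) \mapsto \tau^A \cdot \iota(x)$; in the integer-eigenvalue setting of (ii) this map is holomorphic and the psh property is manifest (the summands become $\|\tau^{-\lambda_i} s_i(x)\|^2_{h_0}$ with $\tau^{-\lambda_i} s_i$ a meromorphic section), and the general real-eigenvalue case reduces to this by approximating $A$ by Hermitian operators with rational eigenvalues and passing to a uniform limit.

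For (ii), the integrality hypothesis makes $\tau \mapsto \tau^{A'}$ a genuine morphism of algebraic groups into $GL(H^0(X,-mK_X))$, so Proposition \ref{pprt} directly produces the very ample test configuration $(\CX_{A'}, \CL_{A'})$ of exponent $m$. The plan is to identify the Bergman ray for $A'$ with the $\cx^*$-equivariant fiberwise restriction of the continuous psh metric on $\CL_{A'}$ obtained by pulling back the Fubini--Study metric on $\CO(1)$ via the embedding $\CX_{A'} \hookrightarrow \prj(H^0(X,-mK_X)^\vee) \times \cx$. The role of the shift by $\lambda_{\min}$ is precisely to normalise the minimum weight of the $\cx^*$-action on $H^0(\CX_{A',0}, \CL_{A'}|_{\CX_{A',0}})$ to zero, which is the condition ensuring that the pulled-back psh metric extends as a bounded continuous psh metric across the central fiber of the compactification $\bar\CX_{A'} \to \prj^1$, rather than acquiring a logarithmic singularity in $|\tau|$. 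The identification of the non-Archimedean limit of this psh ray with the positive non-Archimedean metric $\phi^{\mathrm{NA}}_{A'} \in \CH^{\mathrm{NA}}$ associated to $(\CX_{A'}, \CL_{A'})$ then follows from the standard correspondence between bounded test-configuration psh metrics and positive non-Archimedean metrics (cf.~\cite[\S 6--7]{bhj1}, \cite[\S 2]{BBJ}).

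The main delicacy I anticipate is the last step of (ii): the shift $A \mapsto A'$ changes the analytic psh ray by the linear term $-2\lambda_{\min} t/m$, and one must verify that this corresponds on the algebraic side to the twist $\CL_A \mapsto \CL_{A'} = \CL_A + \lambda_{\min} \CX_0$ (cf.~Remark \ref{rmtsliv}) in such a way that $\phi^{\mathrm{NA}}_{A'}$ lands in $\CH^{\mathrm{NA}}$ with the intended positivity normalisation, rather than merely being bounded below. Modulo this convention check, the lemma reduces to well-established material in non-Archimedean pluripotential theory, and I expect the actual proof to amount to brief appeals to the references above.
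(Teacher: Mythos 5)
Your proposal is correct and takes essentially the same route as the paper: diagonalise $A$, verify linear growth by elementary estimation of $\frac{1}{m}\log\sum_i e^{2t\lambda_i}\Vert s_i\Vert^2_{h_0}$, realise the ray as the pullback of the Fubini--Study metric along $(x,\tau)\mapsto\tau^A\cdot\iota(x)$ for the psh property, and for (ii) observe that the shift by $\lambda_{\min}$ makes all exponents nonnegative so that the fibrewise metric is precisely $\tilde h_0|_{\CX_{A'}}$, which extends smoothly (though possibly degenerately) over the central fibre, after which the appeal to \cite[\S3.1]{bhj2} is immediate.

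One small remark on economy: in (i) your extra approximation step for irrational eigenvalues is not needed — plurisubharmonicity of $(x,\tau)\mapsto\frac{1}{m}\log\sum_i|\tau|^{-2\lambda_i}\Vert s_i(x)\Vert^2_{h_0}$ is a local statement on $X\times\cx^*$ and $\tau^{-\lambda_i}$ is locally holomorphic for any real $\lambda_i$ upon choosing a branch of $\log\tau$, so the Donaldson inequality $(p_1^*\omega_0+\partial\bar\partial\,\mathrm{FS}(H_t))^{n+1}\ge 0$ that the paper cites already covers the general case directly. Also, your two-sided bound produces slope $\frac{2\lambda_{\max}}{m}$, which is the correct constant; the paper's displayed bound drops the factor of two, a harmless typo that does not affect the linear-growth conclusion. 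The convention check you flag at the end (compatibility of the additive shift $-2\lambda_{\min}t/m$ with the twist $\CL_A\mapsto\CL_A+\lambda_{\min}\CX_0$) is real but routine, and is exactly the content of Remark \ref{rmtsliv}.
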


In the above, a psh ray is as defined in \cite[Definition 1.3]{BBJ}, which is also called a ``subgeodesic'' in \cite[\S 3.1]{bhj2} and many other papers. Recall also that a positive non-Archimedean metric on $-K_X$ is a certain equivalence class of semiample test configurations for $(X , -K_X)$ (see \cite[Definition 6.2]{bhj1}), and the notions of psh rays and non-Archimedean limits are as defined in \cite[Definition 3.1]{bhj2}. We refer the reader to \cite{bhj1,bhj2} for more details on non-Archimedean metrics and non-Archimedean limits, and do not give a detailed review of them in this paper as it is rather technical. The only remark here is that the re-scaling $A \mapsto A - \lambda_{\mathrm{max}} \cdot \mathrm{id}_{N_m}$ in (ii) does not change the total space of the test configuration and only results in a difference in linearising the $\cx^*$-action on the polarisation, as we saw in Remark \ref{rmtsliv}.

\begin{proof}
	Ascertaining both claims is an easy exercise in checking the definitions given in \cite[Definition 1.3 and (4.1)]{BBJ} for the first item, and \cite[\S 3.1]{bhj2} for the second. Writing $p_1 : X \times \cx^* \to X$ for the natural projection, the required semipositivity for the first item is a consequence of the well-known inequality $(p_1^*\omega_0 + \partial_{X \times \cx^*} \bar{\partial}_{X \times \cx^*} \mathrm{FS} (H_t))^{n+1} \ge 0$ which follows from e.g.~\cite[Proposition 3]{donsymm} and \cite[page 351]{donproj2}. To see that it is of linear growth, we only need to note, from (\ref{eqfst}),
	\begin{equation*}
	\lim_{t \to + \infty} \frac{1}{t} \sup_X \mathrm{FS} (H_t) = \lim_{t \to + \infty} \frac{1}{m} \log \left( \sup_X \sum_{i=1}^{N_m} \left\Vert e^{tA} s_i \right\Vert^2_{h_0} \right)^{1/t} \le \frac{\lambda_{\mathrm{max}}}{m}
	\end{equation*}
	where $\lambda_{\mathrm{max}}$ is the largest modulus of the eigenvalues of $A$.

For the second item, by replacing the reference basis $\{ s_i \}_{i=1}^{N_m}$ by an $H_0$-unitarily equivalent basis if necessary, we may diagonalise $A' = \mathrm{diag}(\lambda'_1 ,  \dots , \lambda'_{N_m} )$, $\lambda'_1 \ge  \cdots \ge \lambda'_{N_m} \ge 0$, and hence, for $H'_t := e^{-t(A')^*} e^{-tA'}$, we have
	\begin{equation*}
		\mathrm{FS} (H'_t) = \frac{1}{m} \log \left( \sum_{i=1}^{N_m} \left\Vert e^{\lambda'_i t} s_i \right\Vert^2_{h_0} \right) .
	\end{equation*}
	Then the metric $\exp (-m \mathrm{FS} (H'_t)) h_0^m$ indeed extends to a smooth metric on $\CL_{A'}$ over $\cx$ as $\tilde{h}_0 |_{\CX_{A'}}$, where we recall our notational convention (\ref{dfhtilde}); note that this metric may be degenerate on the central fibre of $\CX_{A'}$. Thus we get the statement \cite[Definition 3.1]{bhj2} that we needed to prove.
\end{proof}

We recall the convexity result due to Berndtsson \cite{ber15} which plays a very important role in the proof, which we can apply to the Bergman geodesic rays by Lemma \ref{lmbgnam} (see also \cite[Lemmas 6.5 and 7.2]{BBGZ}).

\begin{theorem} \emph{(Berndtsson \cite[Theorems 1.1 and 1.2]{ber15})} \label{thberndt}
	$\mathscr{L}$ is convex along the Bergman geodesic rays. More precisely, we have
	\begin{equation*}
		\frac{d^2}{dt^2} \mathscr{L} (\mathrm{FS} (H_t)) \ge 0
	\end{equation*}
	for the Bergman geodesic ray (\ref{eqfst}), and the equality holds only if there exists a holomorphic vector field on $X$ with the flow $\psi_t$ such that
	\begin{equation*}
	\psi^*_t (\ddbar \mathrm{FS} (H_t)) = \ddbar \mathrm{FS} (H_0) .
	\end{equation*}
\end{theorem}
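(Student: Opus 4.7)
The plan is to reduce to Berndtsson's theorem on the plurisubharmonic variation of Bergman kernels by exhibiting the Bergman geodesic ray as an $S^1$-invariant psh weight on a trivial family over the disk, and to appeal to the rigidity half of that theorem for the equality case.

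Take $\mathbb{D}^* := \{ \tau \in \cx \mid 0 < |\tau| < 1 \}$ and extend (\ref{eqfst}) to
\begin{equation*}
\Phi(\tau, x) := \frac{1}{m} \log \sum_{i=1}^{N_m} \left\Vert \tau^{-A} s_i \right\Vert^2_{h_0}(x)
\end{equation*}
on $X \times \mathbb{D}^*$, so that $\Phi(\tau, x) = \mathrm{FS}(H_t)(x)$ whenever $|\tau| = e^{-t}$. The $H_0$-hermiticity of $A$ renders $\Phi$ an $S^1$-invariant function of $\tau$, and the Fubini--Study computation already cited in Lemma \ref{lmbgnam}(i) shows that $p_X^* \omega_0 + \frac{\ai}{2 \pi} \ddbar \Phi \geq 0$ as a current on $X \times \mathbb{D}^*$, where $p_X : X \times \mathbb{D}^* \to X$ is the natural projection.

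Berndtsson's positivity theorem for direct images, applied to this family with the relative weight $\Phi$, then yields that
\begin{equation*}
F(\tau) := - \log \int_X e^{-\Phi(\tau, \cdot)} d \mu_0 = \mathscr{L}\bigl( \mathrm{FS} (H_{- \log |\tau|}) \bigr)
\end{equation*}
is subharmonic on $\mathbb{D}^*$. By the $S^1$-invariance of $\Phi$, the function $F$ depends only on $t = -\log|\tau|$, and for such a function subharmonicity on $\mathbb{D}^*$ is equivalent to convexity in $t$. This proves the asserted inequality.

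For the equality case, assume $\frac{d^2}{dt^2} \mathscr{L}(\mathrm{FS}(H_t)) \equiv 0$ on an open interval, so that $F$ is harmonic on the corresponding annulus. The rigidity half of Berndtsson's theorem then forces the $(0,1)$-part of the canonical horizontal lift of $\partial_\tau$ to vanish fiberwise, so that this lift descends to a genuine holomorphic vector field on $X$; its time-$t$ flow $\psi_t$ then satisfies the claimed identity $\psi_t^*(\ddbar \mathrm{FS}(H_t)) = \ddbar \mathrm{FS}(H_0)$. I expect this rigidity step to be the main obstacle: the positivity direction is a relatively clean consequence of Berndtsson's curvature formula combined with H\"ormander-style $L^2$-estimates, but extracting a genuine holomorphic vector field from the pointwise vanishing of a curvature quantity requires the more delicate $\bar{\partial}$-analysis on $X \times \mathbb{D}^*$ that forms the technical heart of \cite{ber15}.
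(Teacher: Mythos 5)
Your proposal reconstructs exactly the reduction the paper has in mind: Theorem~\ref{thberndt} is a cited result, and the paper's only contribution is to note (via Lemma~\ref{lmbgnam}(i)) that Bergman geodesic rays are $S^1$-invariant psh rays so that Berndtsson's subharmonicity and rigidity theorems from \cite{ber15} apply directly. Your setup of the weight $\Phi$ on $X\times\mathbb{D}^*$, the $S^1$-invariance argument reducing subharmonicity to convexity in $t$, and the deferral of the rigidity step to the $\bar\partial$-analysis in \cite{ber15} match the intended argument; the one minor imprecision is that what is being invoked is really \cite[Theorems~1.1--1.2]{ber15} (subharmonicity of $-\log\int_X e^{-\phi}$ and its equality case for Fano manifolds) rather than the general direct-image curvature theorem, though the two are of course closely related.
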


We further recall the following foundational theorem, which is a collection of the results proved in \cite[Theorem 5.4]{BBJ}, \cite[Proposition 3.8]{Berman16}, \cite[Proposition 7.29]{bhj1}, and \cite[Theorems 3.6 and 3.7]{bhj2}, summarised for our purposes in this paper.


\begin{theorem} \emph{(see \cite{Berman16,BBJ,bhj1,bhj2})} \label{thasfed}
	Suppose that an $H_0$-hermitian matrix $A \in \mathfrak{gl} (H^0 (X , -mK_X))$ has rational eigenvalues, and choose $c \in \mathbb{N}$ so that $cA$ has integral eigenvalues. Writing $ H_t := e^{-tA^*} e^{tA}$ for the Bergman geodesic ray and $(\CX_{cA} , \CL_{cA})$ for the test configuration associated to $cA$ as in Proposition \ref{pprt}, we have
	\begin{equation*}
		\lim_{t \to + \infty} \frac{d}{dt} \mathscr{L} (\mathrm{FS} (H_t)) = \lim_{t \to + \infty} \frac{\mathscr{L} (\mathrm{FS} (H_t))}{t} = \frac{1}{c} \left( -1 + \mathrm{lct} (\CX_{cA}^{\nu} , D_{(\CX_{cA}^{\nu} , \CL_{cA}^{\nu})} ; \CX_{cA,0}^{\nu}) \right),
	\end{equation*}
	and
	\begin{equation*}
		\lim_{t \to + \infty} \frac{d}{dt} \mathscr{E} (\mathrm{FS} (H_t)) =  \lim_{t \to + \infty} \frac{\mathscr{E} (\mathrm{FS} (H_t))}{t} = \frac{1}{c} \frac{(\bar{\CL}_{cA}^{\nu})^{n+1}}{(n+1) m^{n+1} \mathrm{Vol}(X) } .
	\end{equation*}
	In particular,
	\begin{equation*}
		\lim_{t \to + \infty} \frac{d}{dt} \mathscr{D} (\mathrm{FS} (H_t)) = \lim_{t \to + \infty} \frac{\mathscr{D} (\mathrm{FS} (H_t))}{t} = \frac{1}{c} \mathrm{Ding} (\CX_{cA} , \CL_{cA}).
	\end{equation*}
\end{theorem}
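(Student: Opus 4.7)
The plan is to derive the three asymptotic slope formulas by combining non-Archimedean pluripotential theory with the convexity/concavity properties of $\mathscr{L}$ and $\mathscr{E}$ along psh rays; the formula for $\mathscr{D}$ then follows by linearity and the definition of the Ding invariant in Definition \ref{dfding}. The first step is a reduction to $c=1$: since $A$ is $H_0$-hermitian we have $H_t = e^{-2tA}$ and $H^{(cA)}_t = e^{-2ctA} = H^{(A)}_{ct}$, so $\mathrm{FS}(H^{(cA)}_t) = \mathrm{FS}(H^{(A)}_{ct})$, which explains the $1/c$ factor relating the slopes along the $A$-ray to those of the $cA$-ray. From now on we may assume the eigenvalues of $A$ are integers.

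Next, Lemma \ref{lmbgnam}(i) tells us the Bergman ray is a psh ray of linear growth, and Lemma \ref{lmbgnam}(ii) identifies its non-Archimedean limit, after the translation $A \mapsto A' := A - \lambda_{\min} \cdot \mathrm{id}$, with the positive non-Archimedean metric $\phi^{\mathrm{NA}}_{A'}$ represented by $(\CX_{A'}, \CL_{A'})$. I would then invoke the asymptotic slope formulas for $\mathscr{E}$ and $\mathscr{L}$ along psh rays admitting non-Archimedean limits, namely \cite[Proposition 7.29]{bhj1} for $\mathscr{E}$ and \cite[Proposition 3.8]{Berman16} together with \cite[Theorems 3.6--3.7]{bhj2} for $\mathscr{L}$, to obtain
\[
\lim_{t \to +\infty} \frac{\mathscr{E}(\mathrm{FS}(H'_t))}{t} = \frac{(\bar{\CL}^{\nu}_{A'})^{n+1}}{(n+1) m^{n+1} \mathrm{Vol}(X)}, \qquad \lim_{t \to +\infty} \frac{\mathscr{L}(\mathrm{FS}(H'_t))}{t} = -1 + \mathrm{lct}(\CX^{\nu}_{A'}, D_{(\CX^{\nu}_{A'}, \CL^{\nu}_{A'})}; \CX^{\nu}_{A',0}).
\]
The translation $A \mapsto A'$ changes $\mathrm{FS}(H_t)$ by a term affine in $t$ (proportional to $t\lambda_{\min}/m$), which shifts both the Archimedean slopes and the corresponding non-Archimedean invariants by compensating amounts; by the translation invariance recorded in Remark \ref{rmtsliv} these cancel, and one recovers the formulas stated for the original $A$.

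To upgrade the ratio limits to derivative limits, I would use Berndtsson's convexity (Theorem \ref{thberndt}): the function $t \mapsto \mathscr{L}(\mathrm{FS}(H_t))$ is convex, hence its derivative is non-decreasing and the standard lemma for convex functions of linear growth gives $\lim_t (d/dt)\mathscr{L}(\mathrm{FS}(H_t)) = \lim_t \mathscr{L}(\mathrm{FS}(H_t))/t$. For $\mathscr{E}$ the same conclusion follows from its concavity along psh rays (a classical consequence of the pluripotential-theoretic properties of the Monge--Amp\`ere energy), and taking the difference gives the corresponding statement for $\mathscr{D}$. Combining the formulas for $\mathscr{L}$ and $\mathscr{E}$ then matches Definition \ref{dfding} verbatim.

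The main obstacle is the non-Archimedean slope formulas invoked in the second paragraph, which depend on a delicate analysis of Monge--Amp\`ere measures at non-Archimedean metrics and on the identification of the log canonical threshold with the asymptotic behaviour of $\mathscr{L}$; in our setting these are imported directly from \cite{BBJ, bhj1, bhj2, Berman16}. The remaining work is essentially bookkeeping: matching the conventions for linearisations and central-fibre shifts between the Archimedean and non-Archimedean sides so that the slopes line up precisely with the intersection numbers and log canonical thresholds appearing in the statement.
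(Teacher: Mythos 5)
Your proposal follows essentially the same route as the paper's own justification of this theorem: the theorem is stated as a summary of results imported from \cite{Berman16,BBJ,bhj1,bhj2}, and the paper's own remarks after the statement consist of exactly the three ingredients you identify — (1) the $1/c$ reduction by reparametrising the Bergman ray for $A$ as the ray for $cA$, (2) the non-Archimedean slope formulae from the cited references applied to a psh ray of linear growth whose non-Archimedean limit is identified by Lemma \ref{lmbgnam}, and (3) the upgrade from ratio limits to derivative limits via convexity along the ray.

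Two small inaccuracies worth flagging. First, you assert that $\mathscr{E}$ is \emph{concave} along psh rays; the paper instead invokes the fact that $\mathscr{E}$ is \emph{convex} along Bergman geodesic rays, citing \cite[Proposition 1]{donproj2}. The conclusion you need (that the derivative of a monotone-derivative function of linear growth converges to the asymptotic slope) holds under either sign, so your argument is not broken, but the sign is wrong. Second, your appeal to Remark \ref{rmtsliv} to justify the translation $A\mapsto A'=A-\lambda_{\min}\mathrm{id}$ is slightly off: that remark records the translation invariance of $\mathrm{Ding}$ and $\mathrm{Chow}_m$, whereas $\mathscr{L}$ and $\mathscr{E}$ individually are \emph{not} translation invariant (as the paper itself emphasises right after the theorem). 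What actually saves the individual formulae is that under $A\mapsto A+c\,\mathrm{id}$ the Archimedean slope of $\mathscr{L}$ (resp.\ $\mathscr{E}$) and the corresponding algebro-geometric invariant $-1+\mathrm{lct}$ (resp.\ the intersection number) each shift by the same constant, so the equality in each slope formula is preserved; only for $\mathscr{D}$ do the shifts cancel.
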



The equality $\lim_{t \to + \infty} \frac{d}{dt} \mathscr{L} (\mathrm{FS} (H_t)) = \lim_{t \to + \infty} \mathscr{L} (\mathrm{FS} (H_t)) / t$ is a direct consequence of Berndtsson's convexity, and the corresponding statement for $\mathscr{E}$ follows from the well-known fact that $\mathscr{E}$ is convex along the Bergman geodesic rays (see \cite[Proposition 1]{donproj2}). We also observe
\begin{equation*}
	\lim_{t \to + \infty} \frac{\mathscr{L} (\mathrm{FS} (H_t))}{t} = \lim_{ct \to + \infty} \frac{1}{c} \frac{\mathscr{L} (\mathrm{FS} (H_{ct}))}{t} = \frac{1}{c} \left( -1 + \mathrm{lct} (\CX_{cA}^{\nu} , D_{(\CX_{cA}^{\nu} , \CL_{cA}^{\nu})} ; \CX_{cA,0}^{\nu}) \right)
\end{equation*}
for $c \in \mathbb{N}$, and similarly for $\mathscr{E}$ and $\mathscr{D}$. To get the same result, we can also consider the pullback by the base change $\cx \ni \tau \mapsto \tau^c \in \cx$ ($c \in \mathbb{N}$) as in \cite[\S 6.3]{bhj1}, combined with \cite[Proposition 3.5 and Theorem 3.7]{bhj2}. Note also that $\mathscr{L}$ and $\mathscr{E}$ are not translation invariant, but their difference $\mathscr{D}$ (and also $\mathscr{D}_m$) is (cf.~Remark \ref{rmdgtrinv}).


\begin{remark} \label{rmlenawdf}
	For an $H_0$-hermitian $A \in \mathfrak{gl} (H^0 (X , -mK_X))$, not necessarily of rational eigenvalues, we can show that the asymptotic slopes of the functionals $\mathscr{L}, \mathscr{E}$ are well-defined real numbers since the Bergman geodesic ray is a psh ray of linear growth by Lemma \ref{lmbgnam}; see \cite[Theorem 5.4]{BBJ} for the proof for $\mathscr{L}$, and e.g.~\cite[Proposition 4.1]{BBJ}, \cite[Proposition 4.1]{BDL17} for $\mathscr{E}$. In particular, the asymptotic slopes for $\mathscr{D}, \mathscr{D}_m$ are well-defined real numbers even when $A \in \mathfrak{gl} (H^0 (X , -mK_X))$ does not have rational eigenvalues.
\end{remark}

\section{Proof of the main results} \label{scpfmr}

\subsection{Proof of Theorem \ref{mthst} and Corollary \ref{mthstc}} \label{scpfmtmc}

We pick a basis for $H^0 (X , -mK_X)^{\vee}$ that is dual to the reference basis $\{ s_i \}_{i=1}^{N_m}$ in Remark \ref{rmrfbs}, and identify $\prj ( H^0 (X , -mK_X)^{\vee})$ with $\prj^{N_m -1}$ in what follows. We write $[Z_1 : \cdots : Z_{N_m}]$ for the homogeneous coordinates for $\prj^{N_m-1}$. The Fano manifold $X$ is considered to be embedded in $\prj^{N_m -1}$ by $\iota$, and we observe that the embedded variety $\iota (X)$ is not contained in any proper linear subspace of $\prj^{N_m-1}$.

Recall first that for any $A \in \mathfrak{gl} (H^0 (X , -mK_X))$, which will be assumed to be $H_0$-hermitian in what follows, and the embedded variety $\iota(X) \subset \prj^{N_m -1} $, the flat limit (or the limit in the Hilbert scheme)
\begin{equation} \label{dffllm}
	\CX_0 := \lim_{t \to + \infty} e^{-tA} \cdot \iota( X ) \subset \prj^{N_m -1}
\end{equation}
is a well-defined projective scheme in $\prj^{N_m -1}$ defined by, in a down-to-earth terminology, the homogeneous ideal generated by the limit of the defining homogeneous polynomials for $\iota ( X) \subset \prj^{N_m -1} $ (see e.g.~\cite[Definition 6.3]{szebook}). We write $\CX_{0, \mathrm{red}}$ for the reduced part of $\CX_0$, which, again in a down-to-earth terminology, is equal to $\CX_0$ as an algebraic set in $\prj^{N_m -1}$ but has the reduced scheme structure (note that $\CX_0$ can have multiple components). We also write $\CX^{\mathrm{reg}}_{0, \mathrm{red}}$ for the regular locus of the reduced part of $\CX_0$. 

\begin{remark}
	The family $\pi : \CX_{A} \to \cx$ defined as in (\ref{dftca}) above is not a test configuration if $A$ has non-integral eigenvalues, but is still flat (see \cite[\S 6.2]{szebook} for more details). In fact we do not need the flatness later, and we only need $\dim_{\cx} \CX^{\mathrm{reg}}_{0, \mathrm{red}} = n$ for the later argument. This result is also a consequence of \cite[proof of Lemma 3.15]{hk18} which constructs a test configuration that has the flat limit (\ref{dffllm}) as its central fibre.
\end{remark}

Recalling also the variable $\tau$ in \S \ref{sctstcf}, we have a change of variables
\begin{equation} \label{eqcvtaut}
	\tau = e^{-t} ,
\end{equation}
so that the limit of $\tau^{A} $ as $\tau \to 0$ corresponds to $t \to + \infty$; a subtlety may be that $\tau \in \cx^*$ while $t \in \rl$, but this does not affect the description of the flat limit up to a difference in unitarily rotating the embedding inside $\prj^{N_m -1}$, since the unitary group is compact (see also \cite[Lemma 2]{donlb}). This correspondence allows us to compare the asymptotic behaviour of the (differential-geometric) energy functionals and algebro-geometric invariants given in terms of the central fibre of test configurations.

For the $H_0$-hermitian endomorphism $A \in \mathfrak{gl} (H^0 (X , -mK_X))$ we perform a unitary change of basis for $H^0 (X , -mK_X)$, i.e.~replace the reference basis $\{ s_i \}_{i=1}^{N_m}$ (resp.~homogeneous coordinates $[Z_1 : \cdots : Z_{N_m}]$) by an $H_0$-unitarily equivalent one which is still written as $\{ s_i \}_{i=1}^{N_m}$ (resp.~$[Z_1 : \cdots : Z_{N_m}]$) by abuse of notation, so that $A = \mathrm{diag} (\lambda_1 , \dots , \lambda_{N_m})$ with $\lambda_1 \ge \cdots \ge \lambda_{N_m}$, with respect to this new basis. We first prove the following lemma.

\begin{lemma} \label{lmlmacb}
	Suppose that we write $A' := A - \lambda_{N_m} \cdot \mathrm{id}_{N_m}$ and consider the flat family $\pi : \CX_{A'} \to \cx$ as in (\ref{dftca}), whose central fibre is the flat limit $\CX_0$ defined by $A'$ as in (\ref{dffllm}). Writing $\CO (1)$ for the hyperplane bundle over $\prj^{N_m-1}$ which $\CX_0$ is embedded in, and $\tilde{h}_0$ for the Fubini--Study metric on $\CO (1)$ defined by the reference hermitian form $H_0 \in \CB_m$, the fibrewise $m$-th root $\tilde{h}_0^{1/m}$ defines a volume form on $\CX^{\mathrm{reg}}_{0, \mathrm{red}}$ which may be degenerate.
\end{lemma}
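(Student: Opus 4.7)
\emph{Proof plan.} The geometric idea is that $\tilde{h}_0^{1/m}$ plays the role of a hermitian metric on the fractional line bundle $\CO(1)^{1/m}$, and on the generic fibre $\iota(X) \subset \prj^{N_m - 1}$ this fractional line bundle is canonically isomorphic to $-K_X$, so that $\tilde{h}_0^{1/m}$ becomes a volume form on $X$; the content of the lemma is that this structure persists, possibly with zeros, after transporting it to the regular locus of the reduced central fibre $\CX^{\mathrm{reg}}_{0, \mathrm{red}}$ through the flat degeneration.

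First I would verify that $\dim_{\cx} \CX^{\mathrm{reg}}_{0, \mathrm{red}} = n$. The family $\pi : \CX_{A'} \to \cx$ is flat, either by direct construction inside the Hilbert scheme or via the argument of \cite[proof of Lemma 3.15]{hk18}, so every irreducible component of $\CX_0$ has dimension $n$; the regular locus of $\CX_{0 , \mathrm{red}}$ is therefore a complex manifold of dimension $n$, on which it makes sense to speak of volume forms.

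Next I would construct the volume form through a limiting argument. Over $\cx^*$, the $\cx^*$-action gives a trivialisation $\CX_{A'} \setminus \CX_0 \cong X \times \cx^*$ via $(x , \tau) \mapsto (\tau^{A'} \cdot \iota (x) , \tau)$, under which $\CO(1)|_{\pi^{-1} (\tau)}$ pulls back to $-mK_X$ and $\tilde{h}_0|_{\pi^{-1} (\tau)}$ pulls back to a smooth positive hermitian metric on $-mK_X$. Its fibrewise $m$-th root is therefore, for each $\tau \in \cx^*$, a smooth positive volume form $\omega_X (\tau)$ on $\pi^{-1} (\tau) \cong X$. After diagonalising $A' = \mathrm{diag} (\lambda'_1 , \ldots , \lambda'_{N_m})$ with $\lambda'_i \ge 0$ and $\lambda'_{N_m} = 0$, the explicit formula coming from (\ref{eqfst}) lets one write down $\omega_X (\tau)$ in terms of the reference basis; one then checks that as $\tau \to 0$, the restriction to suitable local charts of $\CX^{\mathrm{reg}}_{0, \mathrm{red}}$ converges to a continuous nonnegative $(n,n)$-form, smooth where positive. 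The locus on which it vanishes, which is the source of the ``possibly degenerate'' qualification in the statement, reflects precisely the effective $\rtn$-divisor $D_{(\CX^{\nu} , \CL^{\nu})}$ appearing in Definition \ref{dfding}.

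The main obstacle is to identify the fractional-line-bundle viewpoint on the generic fibre with an \emph{actual} volume form on the central fibre, given that $\CO(1)|_{\CX^{\mathrm{reg}}_{0, \mathrm{red}}}$ need not equal $-m K_{\CX^{\mathrm{reg}}_{0, \mathrm{red}}}$ and that $\CX_0$ may be non-normal. I expect it will suffice to work in local analytic charts where $\CX^{\mathrm{reg}}_{0, \mathrm{red}}$ is cut out by explicit equations inherited from the defining equations of $\iota (X)$ after applying the one-parameter subgroup $\tau^{A'}$, and to take pointwise limits of $\omega_X (\tau)$ in these charts, invoking flatness to ensure that no unexpected singular contribution appears along the regular reduced locus.
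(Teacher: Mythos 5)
Your proposal follows the same overall route as the paper: both start from the trivialisation over $\cx^*$, use the identification $\CO(1)|_{\pi^{-1}(\tau)} \cong -mK_X$ to turn $\tilde{h}_0^{1/m}$ into a volume form on noncentral fibres, pass to the limit $\tau \to 0$ in local charts near a point of $\CX^{\mathrm{reg}}_{0,\mathrm{red}}$, and invoke flatness to get $\dim_\cx \CX^{\mathrm{reg}}_{0,\mathrm{red}} = n$. The diagonalisation of $A'$ with nonnegative eigenvalues that you propose is also exactly the normalisation the paper uses.

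The one place where you write an expectation rather than an argument is the crux: why does $\omega_X(\tau)$ actually converge to a continuous semipositive $(n,n)$-form on $\CX^{\mathrm{reg}}_{0,\mathrm{red}}$ rather than blow up or oscillate? The paper closes this gap by two concrete ingredients that you should make explicit. First, Lemma \ref{lmbgnam}(ii) shows that with the normalisation $A' = A - \lambda_{N_m}\cdot\mathrm{id}$, the metric $\tilde{h}_0|_{\CX_{A'}}$ extends to a \emph{smooth} hermitian metric on $\CL_{A'}$ over the whole total space $\CX_{A'}$, including $\tau = 0$ (it may be zero but cannot be unbounded); this is precisely what prevents a singular contribution near the central fibre and is where the sign of the eigenvalues is actually used. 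Second, to make sense of the pointwise limit of the $(n,n)$-forms in a common coordinate frame, the paper uses a parameter-dependent implicit function theorem to produce local holomorphic coordinate systems on $\CX^{\mathrm{reg}}_{0,\mathrm{red}}$ near a regular point $p$ that perturb continuously to coordinate systems on nearby fibres $e^{-tA'}\cdot\iota(X)$. With both of these in hand, the boundedness of $\tilde{h}_0|_{\CX_{A'}}$ immediately gives the continuous semipositive extension, without having to track defining polynomials of $\CX_0$ explicitly. Your remark relating the zero locus of the resulting volume form to $D_{(\CX^\nu,\CL^\nu)}$ is correct and a useful guiding intuition, though the paper's proof of this lemma does not use it.
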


\begin{proof}
	Recalling the second item of Lemma \ref{lmbgnam}, we find that $\tilde{h}_0 |_{\CX_{A'}}$ is a smooth hermitian metric on $\CL_{A'} = \CO(1) |_{\CX_{A'}} $ which is allowed to be degenerate on $\CX_{0, \mathrm{red}}$. For any point $p \in \CX^{\mathrm{reg}}_{0, \mathrm{red}}$ we pick an open set $\CU \subset \CX_{A'}$ (in the Euclidean topology) with $(p,0) \in \CU \subset \prj^{N_m -1} \times \cx$.
	
	We observe that there exists a holomorphic coordinate system of $\CX^{\mathrm{reg}}_{0, \mathrm{red}}$ around $p$ which can be perturbed to give a holomorphic coordinate system of a point in a nearby noncentral fibre, as follows. We first pick a set of homogeneous polynomials defining $\CX^{\mathrm{reg}}_{0, \mathrm{red}}$ near $p$ and choose smooth coordinates by the implicit function theorem. By the definition of the flat limit we can pick a point $q	$ in $\pi^{-1} (t) \cong e^{-tA'} \cdot \iota (X)$ such that $t \neq 0$ and $q$ is close to $p$ in $\prj^{N_m -1} \times \cx$ in the Euclidean topology, and we can choose $t$ to be sufficiently close to $0$ so that the set of defining homogeneous polynomials remain nondegenerate to satisfy the hypothesis of the implicit function theorem. Then the parameter-dependent implicit function theorem (see e.g. \cite[Theorem 2.3]{Gloeckner}) implies that the holomorphic coordinates near $q$ thus chosen converge to the ones near $p$ at least continuously with respect to the Euclidean topology of $\prj^{N_m -1} \times \cx$.
		
	Writing $\CV := \CU \setminus \CX_{0, \mathrm{red}}$, we find that $\tilde{h}^{1/m}_0 |_{\CV}$ defines a smooth family of $2n$-forms on $\CV$, since at each point $(q, e^{-t}) \in \CV$ we have $\CO(1)|_{(q,e^{-t})} = -mK_{e^{-tA'} \cdot \iota(X)} |_q$ for the anticanonically embedded Fano manifold $e^{-tA'} \cdot \iota(X) \subset \prj^{N_m-1}$, by recalling the correspondence $\mathcal{H}om_{C^{\infty}_X} ((-K_X) \otimes \overline{(-K_X)} , \cx ) = K_X \otimes \overline{K_X}$ explained in \S \ref{sckemdf}. Since $\tilde{h}_0 |_{\CX_{A'}}$ is a smooth hermitian metric over $\CX_{A'}$ we may extend the positive $2n$-form $\tilde{h}^{1/m}_0 |_{\CV}$ to a semipositive $2n$-form on $\CU$ by continuity, noting that the resulting form on $\CU \cap \CX^{\mathrm{reg}}_{0, \mathrm{red}}$ can be written explicitly in terms of the homogeneous coordinates $[Z_1 : \cdots : Z_{N_m}]$ and the homogeneous ideal defining $\CX^{\mathrm{reg}}_{0, \mathrm{red}}$, or equivalently the homogeneous coordinate system that can be perturbed to the one for the nearby point, as discussed above. Finally, we have $\dim_{\cx} \CX^{\mathrm{reg}}_{0, \mathrm{red}} = n$ by the flatness of $\pi : \CX_{A'} \to \cx$, and hence a semipositive $2n$-form on $\CX^{\mathrm{reg}}_{0, \mathrm{red}}$ defines a volume form which may be degenerate.
\end{proof}


The key proposition, which we prove by adapting the strategy of \cite[\S 4]{donlb}, is the following.

\begin{proposition} \label{lmasll}
	Let $A \in \mathfrak{gl} (H^0 (X , -mK_X))$ be $H_0$-hermitian with eigenvalues $\lambda_1 \ge \cdots \ge \lambda_{N_m}$. We have
	\begin{equation*}
		\lim_{t \to + \infty} \frac{d}{dt} \mathscr{L} (\mathrm{FS}(H_t)) = \frac{2}{m} \sum_{i=1}^{N_m} \lambda_i C_i (\CX_0),
	\end{equation*}
	for some real numbers $C_1 (\CX_0) , \dots , C_{N_m} (\CX_0)$, with $\sum_{i=1}^{N_m} C_i (\CX_0) = 1$, which depends only on the flat limit $\CX_0$ as defined in (\ref{dffllm}) and the $H_0$-orthonormal basis $\{ s_i \}_{i=1}^{N_m}$ that diagonalises $A$.
\end{proposition}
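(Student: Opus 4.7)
My plan is to diagonalise $A$ by a unitary change of the reference basis $\{ s_i \}_{i=1}^{N_m}$ and apply Lemma \ref{lmdvffs} directly. Writing $A = \mathrm{diag}(\lambda_1, \ldots, \lambda_{N_m})$, so that $A^* = A$ and $\|e^{tA} s_l\|^2_{h_0} = e^{2t\lambda_l}\|s_l\|^2_{h_0}$, the lemma immediately gives
\begin{equation*}
    \frac{d}{dt}\mathscr{L}(\mathrm{FS}(H_t)) = \frac{2}{m}\sum_{i=1}^{N_m} \lambda_i C_i(t), \qquad C_i(t) := \fint_X \frac{e^{2t\lambda_i}\|s_i\|^2_{h_0}}{\sum_l e^{2t\lambda_l}\|s_l\|^2_{h_0}} \, d\mu_{\mathrm{FS}(H_t)},
\end{equation*}
together with the obvious identity $\sum_i C_i(t) = 1$. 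The task thus reduces to showing that each $C_i(t)$ converges as $t \to +\infty$ to some $C_i(\CX_0) \in \rl$ depending only on $\CX_0$ and the basis. Observing that $C_i(t)$ is invariant under the shift $A \mapsto A - \lambda_{N_m}\mathrm{id}$ (both numerator and denominator rescale by the same factor $e^{-2t\lambda_{N_m}}$), and that this shift does not change $\CX_0$ as a projective subscheme, I may assume throughout that $\lambda_i \ge 0$ and $\lambda_{N_m} = 0$.

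My next step is to reinterpret $C_i(t)$ as a ratio of fibre integrals on the flat family $\pi : \CX_A \to \cx$ via the change of variables $\tau = e^{-t}$. Concretely, under the identification $\pi^{-1}(\tau) \cong X$ underlying the proof of Lemma \ref{lmbgnam}(ii), the measure $d\mu_{\mathrm{FS}(H_t)}$ is matched with the fibrewise volume form $\tilde{h}_0^{1/m}|_{\pi^{-1}(\tau)}$, while the weights $\frac{e^{2t\lambda_i}\|s_i\|^2_{h_0}}{\sum_l e^{2t\lambda_l}\|s_l\|^2_{h_0}}$ correspond, after the appropriate $\tau$-twist, to the squared Fubini--Study norm $\|Z_i\|^2_{\tilde{h}_0}$ of the $i$-th coordinate section restricted to the fibre. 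This turns $C_i(t)$ into
\begin{equation*}
    C_i(t) = \frac{\int_{\pi^{-1}(\tau)} \|Z_i\|^2_{\tilde{h}_0} \, \tilde{h}_0^{1/m}}{\int_{\pi^{-1}(\tau)} \tilde{h}_0^{1/m}}.
\end{equation*}
Since $\|Z_i\|^2_{\tilde{h}_0}$ is globally defined, continuous, and bounded by $1$ on $\prj^{N_m-1}$, and Lemma \ref{lmlmacb} asserts that $\tilde{h}_0^{1/m}$ extends to a (possibly degenerate) volume form on $\CX^{\mathrm{reg}}_{0, \mathrm{red}}$, I plan to push $\tau \to 0$ via a dominated-convergence argument on the total space $\CX_A$ to obtain
\begin{equation*}
    C_i(\CX_0) = \frac{\int_{\CX^{\mathrm{reg}}_{0, \mathrm{red}}} \|Z_i\|^2_{\tilde{h}_0} \, \tilde{h}_0^{1/m}}{\int_{\CX^{\mathrm{reg}}_{0, \mathrm{red}}} \tilde{h}_0^{1/m}},
\end{equation*}
which manifestly depends only on $\CX_0$ and the basis. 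The normalisation $\sum_i C_i(\CX_0) = 1$ then follows at once from $\sum_i \|Z_i\|^2_{\tilde{h}_0} \equiv 1$ on $\prj^{N_m-1}$.

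The main obstacle I anticipate is justifying the passage to the limit at $\tau = 0$ in the presence of a singular locus, embedded components, or non-reduced structure in the central fibre, since Lemma \ref{lmlmacb} only produces the limiting measure on $\CX^{\mathrm{reg}}_{0, \mathrm{red}}$. I plan to handle this by working locally on $\CX_A$ along the lines of Donaldson's argument in \cite[\S 4]{donlb}: because $\tilde{h}_0$ is smooth on all of $\prj^{N_m-1}$ and the family $\pi : \CX_A \to \cx$ is equidimensional with $\dim_{\cx} \CX^{\mathrm{reg}}_{0, \mathrm{red}} = n$, all degeneracy is confined to the restriction onto fibres and can be controlled by standard arguments on a cover adapted to the flatness of the family, giving the required uniform bounds for dominated convergence.
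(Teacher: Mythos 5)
Your approach follows the paper's closely: diagonalise $A$, use Lemma \ref{lmdvffs} to write the derivative as a weighted sum of the eigenvalues, normalise via the shift $A \mapsto A - \lambda_{N_m}\mathrm{id}_{N_m}$, reinterpret the weights as Fubini--Study norms on the moving fibres $e^{-tA}\cdot\iota(X)$, and pass to the limit using Lemma \ref{lmlmacb} along the flat family. The translation of the integrand, the invariance under the shift, and the identity $\sum_i C_i(t) = 1$ are all correct.

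The genuine gap is that your final formula for $C_i(\CX_0)$ drops the multiplicities of the central fibre. The weak limit of the integration currents $[e^{-tA'}\cdot\iota(X)]$ as $t\to +\infty$ is the algebraic \emph{cycle} $|\CX_0| = \sum_j \nu_j |V_j|$, with $\nu_j$ the multiplicity of the $j$-th irreducible component; this is exactly the content of Donaldson's argument in \cite[\S 4]{donlb} that you invoke. Consequently the limit of your $C_i(t)$ is the ratio
\begin{equation*}
C_i(\CX_0) = \left(\sum_j \nu_j \int_{V_j \cap \CX^{\mathrm{reg}}_{0,\mathrm{red}}} \tilde{h}_0^{1/m}\right)^{-1} \sum_j \nu_j \int_{V_j \cap \CX^{\mathrm{reg}}_{0,\mathrm{red}}} \frac{\Vert Z_i \Vert^2_{\tilde{h}_0}}{\sum_{l}\Vert Z_l\Vert^2_{\tilde{h}_0}}\, \tilde{h}_0^{1/m},
\end{equation*}
and not the unweighted integral over $\CX^{\mathrm{reg}}_{0,\mathrm{red}}$ that you wrote; these multiplicities do \emph{not} cancel in the ratio when $\CX_0$ has a non-reduced component. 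The conclusion of the proposition (existence and dependence only on $\CX_0$ and the basis) still holds once the formula is corrected, but the formula as you stated it is wrong.

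Two secondary points. First, ``dominated convergence on the total space'' is not the right mechanism: the crux is identifying the weak limit of the fibre measures as the cycle measure with multiplicities, which is a statement about convergence of integral currents and is what the appeal to \cite[\S 4]{donlb} must supply; the boundedness of the integrand alone does not tell you what the limit is. Second, you do not address why the denominator is nonzero, which is not automatic: after the shift $\lambda_{N_m} = 0$ one must check that the volume form $\tilde{h}_0^{1/m}$ is nondegenerate somewhere on $\CX^{\mathrm{reg}}_{0,\mathrm{red}}$. The paper does this by observing that the coordinate $Z_{\mathrm{min}}$ associated to the least eigenvalue does not vanish identically on $\iota(X)$ (since $\iota(X)$ spans), and hence, by continuity of the flat limit, is nonvanishing at some regular point of $\CX_{0,\mathrm{red}}$ where the volume form is strictly positive.
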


Recall that by Remark \ref{rmlenawdf} we already know that the above asymptotic slope is a well-defined real number.

\begin{proof}
	We consider the embedded variety $e^{-t A} \cdot \iota (X) \subset \prj^{N_m -1}$ as before. As in the previous lemma, $\tilde{h}_0 |_{e^{-t A} \cdot \iota (X)}$ defines a hermitian metric on the line bundle $-m K_{e^{-t A} \cdot \iota (X)}$ and hence a volume form $ \tilde{h}_0^{1/m} |_{e^{-t A} \cdot \iota (X)}$ on $e^{-tA} \cdot \iota (X)$. With this understood, we can re-write Lemma \ref{lmdvffs} as
\begin{align*}
		\frac{d}{dt} \mathscr{L} (\mathrm{FS}(H_t)) &= \frac{2}{m} \sum_{i=1}^{N_m} \lambda_i  \left(  \int_{e^{-tA} \cdot \iota (X)}  \tilde{h}^{1/m}_0  \right)^{-1} \int_{e^{-tA} \cdot \iota (X)}  \frac{ \Vert Z_i \Vert^2_{\tilde{h}_0}}{\sum_{l=1}^{N_m} \Vert Z_l \Vert^2_{\tilde{h}_0}} \tilde{h}^{1/m}_0 \\
		&= \frac{2}{m} \sum_{i=1}^{N_m} \lambda_i  \left(  \int_{e^{-tA'} \cdot \iota (X)}  \tilde{h}^{1/m}_0  \right)^{-1} \int_{e^{-tA'} \cdot \iota (X)}  \frac{ \Vert Z_i \Vert^2_{\tilde{h}_0}}{\sum_{l=1}^{N_m} \Vert Z_l \Vert^2_{\tilde{h}_0}} \tilde{h}^{1/m}_0,
\end{align*}
where $A':= A - \lambda_{\mathrm{min}} \cdot \mathrm{id}_{N_m}$ for the least eigenvalue $\lambda_{\mathrm{min}} \in \rl$ of $A$. While the effect of re-scaling cancels out between the denominator and the numerator, it is chosen so that the hermitian metric $\exp (- m \mathrm{FS} (H'_t) ) h^m_0 = \tilde{h}_0 |_{e^{-tA'} \cdot \iota (X)}$ extends to a smooth metric over $\CX_{A'}$ which may be degenerate along the central fibre, as we saw in the proof of Lemmas \ref{lmbgnam} and \ref{lmlmacb}.

We take the limit of the domain of the integration, in the sense of the integral currents, as in \cite[\S 4]{donlb}. We write $|\CX_0|$ for the algebraic cycle defined by $\CX_0$, which we may write as $|\CX_0| = \sum_{j} \nu_j |V_j|$, using a finite $\itg$-linear combination of algebraic cycles defined by the reduced and irreducible components of $\CX_0$ ($\nu_j >0$). By Lemma \ref{lmlmacb}, $\tilde{h}_0^{1/m}$ defines a $2n$-form (and hence a volume form) on $\CX^{\mathrm{reg}}_{0, \mathrm{red}}$, which naturally defines an absolutely continuous measure on each reduced and irreducible component of $\CX_0$, since $\tilde{h}_0$ is a smooth (in particular bounded) metric over $\CX_{A'}$ and hence has no singular part that is supported on a Zariski closed subset. Thus, the definition of the flat limit (\ref{dffllm}) implies that we can write the limit $t \to + \infty$ of the above integral as
\begin{equation*}
		\lim_{t \to + \infty} \frac{d}{dt} \mathscr{L} (\mathrm{FS}(H_t)) = \frac{2}{m} \sum_{i=1}^{N_m} \lambda_i  \left( \sum_j \nu_j \int_{V_j \cap \CX^{\mathrm{reg}}_{0, \mathrm{red}}}  \tilde{h}^{1/m}_0  \right)^{-1} \left( \sum_{j} \nu_j \int_{V_j \cap \CX^{\mathrm{reg}}_{0, \mathrm{red}}}  \frac{ \Vert Z_i \Vert^2_{\tilde{h}_0}}{\sum_{l=1}^{N_m} \Vert Z_l \Vert^2_{\tilde{h}_0}} \tilde{h}^{1/m}_0 \right) .
\end{equation*}

We claim that each summand on the right hand side is a well-defined real number which depends only on the flat limit $\CX_0$ and the homogeneous coordinates $[Z_0 : \cdots : Z_{N_m} ]$. We first claim that the denominator is a well-defined nonzero real number. To prove this claim, pick the homogeneous coordinate $Z_{\mathrm{min}}$ corresponding to the eigenvector associated to the least eigenvalue $\lambda_{\mathrm{min}}$ of $A$. Since $\iota (X)$ is not contained in any proper linear subspace of $\prj^{N_m -1}$, there exists at least one point $p_1 \in \iota(X)$ where $Z_{\mathrm{min}}$ is nonzero at $p_1$. Taking the limit $\lim_{t \to + \infty} e^{-tA'} \cdot p_1 =:p_0$, which is a (well-defined) point in $\CX_{0, \mathrm{red}}$ that is fixed by the $\cx^*$-action induced by $A'$ (by the definition of the flat limit (\ref{dffllm})), there exists at least one point $p \in \CX^{\mathrm{reg}}_{0, \mathrm{red}}$ where $Z_{\mathrm{min}}$ is nonzero at $p$ by continuity. With the scaling of $A'$ as above, we find that the volume form $\tilde{h}_0^{1/m} |_{\CU}$ on some Euclidean open set $\CU$ in $\CX^{\mathrm{reg}}_{0, \mathrm{red}}$ containing $p$, given by Lemma \ref{lmlmacb}, is nondegenerate. Outside this Euclidean open set, $\tilde{h}_0^{1/m}$ remains bounded over the whole of $\CX^{\mathrm{reg}}_{0, \mathrm{red}}$ (an important point being that we allow it to be zero), again by the scaling that we chose for $A'$. By extending $\tilde{h}_0^{1/m}$ as an absolutely continuous measure over $\CX_{0, \mathrm{red}}$, we thus find that the total volume over $V_j \cap \CX^{\mathrm{reg}}_{0, \mathrm{red}}$ is finite for all $j$ and nonzero for some $j$, since $\CX_{0, \mathrm{red}}$ is compact in the Euclidean topology. It is immediate from the discussions so far that each summand depends only on the flat limit $\CX_0$, and not on the particular generator $A \in \mathfrak{gl} (H^0 (X , -mK_X))$ which defines $\CX_0$ (see also Remark \ref{rmtsliv}).

Thus we find that for each $i = 1 , \dots , N_m$,
\begin{equation*}
	C_i (\CX_0) := \left( \sum_j \nu_j \int_{V_j \cap \CX^{\mathrm{reg}}_{0, \mathrm{red}}}  \tilde{h}^{1/m}_0  \right)^{-1} \left( \sum_{j} \nu_j \int_{V_j \cap \CX^{\mathrm{reg}}_{0, \mathrm{red}}}  \frac{ \Vert Z_i \Vert^2_{\tilde{h}_0}}{\sum_{l=1}^{N_m} \Vert Z_l \Vert^2_{\tilde{h}_0}} \tilde{h}^{1/m}_0 \right)
\end{equation*}
is a well-defined real number depending only on the flat limit $\CX_0$ and the homogeneous coordinates $[Z_1 : \cdots : Z_{N_m}]$ which correspond to the reference basis $\{ s_i \}_{i=1}^{N_m}$, as required; it is obvious from the above that we have $\sum_{i=1}^{N_m} C_i (\CX_0) = 1$.
\end{proof}

We now begin the proof of Theorem \ref{mthst}. Lemma \ref{lmlmacb} and Proposition \ref{lmasll} give us the necessary ingredients to pursue the strategy introduced by Keller and the author for the $J$-balanced metrics in \cite[\S 3.2]{hk18}, while some additional arguments are necessary to deal with the nontrivial holomorphic vector fields.

\begin{proof}[Proof of Theorem \ref{mthst}]
Suppose that $(X , -K_X)$ is $F$-polystable at level $m$. We first show that
\begin{equation} \label{eqpfmthst1}
	\lim_{t \to + \infty} \frac{d}{dt} \mathscr{D}_m (H_t)  \ge 0 
\end{equation}
for all Bergman geodesic rays $\{ H_t \}_{t \ge 0} \subset \CB_m$, $H_t = e^{-tA^*} e^{-tA}$ as defined in (\ref{eqbggda}), with equality if and only if $A \in \mathfrak{aut} (X)_r$; recall that $\mathscr{D}_m$ is convex along $\{ H_t \}_{t \ge 0}$ by (\ref{eqemqaff}) and Theorem \ref{thberndt}.

Suppose that $A$ is $H_0$-hermitian and has rational eigenvalues. Following the idea of Saito--Takahashi \cite{st19}, we write $\mathscr{D}_m (H_t) = \mathscr{D} (\mathrm{FS} (H_t)) + \mathscr{E} (\mathrm{FS} (H_t)) - \mathscr{E}_m (H_t)$. Then Lemma \ref{lmdvffs}, Definition \ref{dfchowwt}, and Theorem \ref{thasfed} imply that, for some $c \in \mathbb{N}$ such that $cA$ has integral eigenvalues, we have
\begin{equation} \label{errtsd}
		\lim_{t \to + \infty} \frac{d}{dt} \mathscr{D}_m (H_t) = \frac{1}{c} \left(  \mathrm{Ding} ( \CX_{cA} , \CL_{cA} ) + \mathrm{Chow}_m ( \CX_{cA} , \CL_{cA} ) \right)
\end{equation}
where $( \CX_{cA} , \CL_{cA} )$ is a test configuration as defined in Proposition \ref{pprt}. The above quantity is nonnegative, and zero if and only if $( \CX_{cA} , \CL_{cA} )$ is product, by the $F$-polystability.

	For a general $H_0$-hermitian $A \in \mathfrak{gl} (H^0 (X , -mK_X))$, by passing to a basis that is $H_0$-unitarily equivalent to the reference basis if necessary, we write
	\begin{equation*}
		A = \mathrm{diag} (\lambda_1 , \dots , \lambda_{N_m})
	\end{equation*}
	with $\lambda_1 \ge \cdots \ge \lambda_{N_m}$. We then have
	\begin{equation*}
		\lim_{t \to + \infty} \frac{d}{dt} \mathscr{D}_m ( H_t) = \frac{2}{m} \sum_{i=1}^{N_m} \lambda_i \left( C_i (\CX_0) - \frac{1}{N_m} \right),
	\end{equation*}
	by Proposition \ref{lmasll}, where $\CX_0$ is the flat limit (\ref{dffllm}) defined by $A$. We now take an approximation $\{ A_p \}_{p=1}^{\infty} \subset \mathfrak{gl} (H^0 (X , -mK_X))$ of $A$ by $H_0$-hermitian matrices with rational eigenvalues, as in \cite[Lemma 3.15]{hk18}, so that the following hold: $A_p \to A$ as $p \to \infty$ (say in the Hilbert--Schmidt norm), and the flat limit (\ref{dffllm}) defined by $A_p$ is equal to $\CX_0$ (i.e.~the one defined by $A$) for all large enough $p$. Writing $H_{t,p} := \exp (-tA_p^*) \exp (-tA_p)$, and $\lambda_{1,p} , \dots , \lambda_{N_m , p}$ for the eigenvalues of $A_p$, again by Proposition \ref{lmasll} we find
	\begin{equation*}
		\lim_{t \to + \infty} \frac{d}{dt} \mathscr{D}_m ( H_{t,p}) = \frac{2}{m} \sum_{i=1}^{N_m} \lambda_{i,p} \left( C_i (\CX_0) - \frac{1}{N_m} \right),
	\end{equation*}
	where an important point is that the flat limit $\CX_0$ remains unchanged for $A_p$ when $p$ is large enough. Since $A_p \to A$ as $p \to \infty$, we get
	\begin{equation} \label{ernntsd}
		\lim_{t \to + \infty} \frac{d}{dt} \mathscr{D}_m (H_t)  = \lim_{p \to \infty} \lim_{t \to + \infty} \frac{d}{dt} \mathscr{D}_m (H_{t,p}) \ge 0.
	\end{equation}
	for any $H_0$-hermitian $A \in \mathfrak{gl} (H^0 (X , -mK_X))$.

	We prove that the inequality (\ref{ernntsd}) is strict unless $A \in \mathfrak{aut} (X)_r$. Suppose for contradiction that there exists $A \not\in \mathfrak{aut} (X)_r$, $H_0$-hermitian, such that
	\begin{equation} \label{eqzrctd}
		\lim_{t \to + \infty} \frac{d}{dt} \mathscr{D}_m ( H_{t}) = 0
	\end{equation}
	holds for the Bergman geodesic ray $\{ H_t \}_{t \ge 0}$ as defined in (\ref{eqbggda}). By applying the argument in \cite[proof of Lemma 3.17]{hk18}, we can write $A$ as a sum 
	\begin{equation} \label{eqalbtrti}
		A = A_{\alpha} + A_{\beta}
	\end{equation}
	of two $H_0$-hermitian matrices such that, in the diagonalising basis for $A$, we have
	\begin{enumerate}
	\renewcommand{\labelenumi}{(\roman{enumi})}
		\item $A_{\alpha} = \mathrm{diag} (\alpha_1 , \dots , \alpha_{N_m})$ with $\alpha_i \in \rtn$ for all $i$ and $\alpha_1 \ge \cdots \ge \alpha_{N_m}$,
		\item $A_{\beta} = \mathrm{diag} (\beta_1 , \dots , \beta_{N_m})$ with $\beta_i \in \rl$ for all $i$ and $\beta_1 \ge \cdots \ge \beta_{N_m}$,
		\item the flat limit (\ref{dffllm}) defined by $A$, $A_{\alpha}$, and $A_{\beta}$, are all equal.
	\end{enumerate}
	We define $H_{t,\alpha} := \exp (-tA_{\alpha}^*) \exp (-tA_{\alpha})$ and $H_{t,\beta} := \exp (-tA_{\beta}^*) \exp (-tA_{\beta})$. The properties above, together with Proposition \ref{lmasll}, imply that we have
	\begin{equation} \label{eqdcdmrir}
		\lim_{t \to + \infty} \frac{d}{dt} \mathscr{D}_m (H_{t}) = \lim_{t \to + \infty} \frac{d}{dt} \mathscr{D}_m (H_{t,\alpha}) + \lim_{t \to + \infty} \frac{d}{dt} \mathscr{D}_m (H_{t,\beta}) .
	\end{equation}
	Now (\ref{eqzrctd}), combined with (\ref{ernntsd}) and (\ref{eqdcdmrir}), necessarily implies
	\begin{equation*}
		\lim_{t \to + \infty} \frac{d}{dt} \mathscr{D}_m (H_{t,\alpha}) = \lim_{t \to + \infty} \frac{d}{dt} \mathscr{D}_m (H_{t,\beta}) = 0.
	\end{equation*}
	We again take $c \in \mathbb{N}$ so that $cA_{\alpha}$ has integral eigenvalues, and write $(\CX_{c A_{\alpha}} , \CL_{c A_{\alpha}})$ for the test configuration associated to $c A_{\alpha}$. We further note
	\begin{equation*}
		\lim_{t \to + \infty} \frac{d}{dt} \mathscr{D}_m (H_{t,\alpha}) = \frac{1}{c} \left( \mathrm{Ding} (\CX_{c A_{\alpha}} , \CL_{c A_{\alpha}}) + \mathrm{Chow}_m (\CX_{c A_{\alpha}} , \CL_{c A_{\alpha}}) \right) = 0,
	\end{equation*}
	which implies that $(\CX_{c A_{\alpha}} , \CL_{c A_{\alpha}})$ must be product by the $F$-polystability. Thus $\CX_{c A_{\alpha}}$ is isomorphic to $\iota (X) \times \cx \subset \prj (H^0 (X , -mK_X)^{\vee} ) \times \cx $ and its central fibre $\CX_0$, which is the flat limit (\ref{dffllm}) defined by $A_{\alpha}$, must be isomorphic to $\iota (X)$. Noting that $\CX_{c A_{\alpha}} \isom \iota (X) \times \cx$ must be invariant under the action of $e^{-tA_{\alpha}}$, we find $A_{\alpha} \in \mathfrak{aut}(X)$ by the equivariant embedding theorem (\ref{eqautffebd}) and hence $A_{\alpha} \in \mathfrak{aut} (X)_r$ since $A_{\alpha}$ is $H_0$-hermitian. Recall that $A_{\beta} \in \mathfrak{gl}(H^0 (X , -mK_X))$ was defined in such a way that its flat limit agrees with that of $A_{\alpha}$, which is $\CX_0 = \iota (X)$. Noting that $e^{-tA_{\beta}}$ acts on $\CX_0 = \iota (X)$ we find $A_{\beta} \in \mathfrak{aut} (X)_r$, again by (\ref{eqautffebd}), but this implies $A = A_{\alpha} + A_{\beta} \in \mathfrak{aut} (X)_r$ and contradicts our original hypothesis $A \not\in \mathfrak{aut} (X)_r$. Thus, we need to have
	\begin{equation*}
		\lim_{t \to + \infty} \frac{d}{dt} \mathscr{D}_m (H_{t}) > 0,
	\end{equation*}
	for all $H_0$-hermitian endomorphisms $A$ that are not contained in $\mathfrak{aut} (X)_r$.


We now prove that $\mathscr{D}_m$ is invariant under the $\mathrm{Aut}_0 (X)_r$-action. Recalling $\mathrm{Aut}_0 (X)_r = K_0^{\cx}$ in the notation of \S \ref{scqkrgs}, we first show that for any $H \in \CB_m$ and any $A \in \ai \mathrm{Lie} (K_0)$ we have $\frac{d}{dt} \mathscr{D}_m (H_t) = 0$ for the family $H_t := e^{t A} \cdot H$ defined by the action (\ref{eqautaobm}). Noting that $A$ is diagonalisable, if $A$ has integral eigenvalues we find that the family $\{ \mathrm{FS} (H_t) \}_{t \ge 0}$ (which may not be associated to a geodesic emanating from $H$ with respect to the bi-invariant metric, unless $A$ is hermitian with respect to $H$) admits $( \CX_A , \CL_A )$ as its non-Archimedean limit (cf.~Lemma \ref{lmbgnam}) by Lemma \ref{lmautaobm}, up to scaling if necessary, where $( \CX_A , \CL_A )$ is the product test configuration generated by $A$. We thus find, recalling Remark \ref{rmdgtrinv}, that
\begin{equation*}
		\lim_{t \to + \infty} \frac{d}{dt} \mathscr{D}_m (H_{t}) = \mathrm{Ding} (\CX_{A} , \CL_{A}) + \mathrm{Chow}_m (\CX_{A} , \CL_{A}) = 0
\end{equation*}
by the $F$-polystability; Theorem \ref{thasfed} is stated only for the Bergman geodesic rays but the result above holds since $\{ \mathrm{FS} (H_t) \}_{t \ge 0}$ is a psh ray (which is in fact a geodesic in $\CH$ induced by the holomorphic vector field $A$ by Lemma \ref{lmautaobm}) that admits $( \CX_A , \CL_A )$ as its non-Archimedean limit (see also the proof of Lemma \ref{rmfstgs} presented later). We now observe the following: for any $B \in \mathfrak{aut} (X)$ and the associated family $H_t = e^{t B} \cdot H$, writing $\{ Z'_i \}_{i=1}^{N_m}$ for an $H$-orthonormal basis and $\tilde{h}_H$ for the hermitian metric on the hyperplane bundle over $\prj^{N_m-1}$ with respect to $H$, we have
	\begin{align}
		\frac{d}{dt} \mathscr{D}_m (H_{t}) &= \frac{1}{m}  \sum_{i,j=1}^{N_m} (B_{ij} + B^{\dagger}_{ij}) \left( \fint_{e^{-tB} \cdot \iota (X)}  \frac{ \tilde{h}_H ( Z'_i ,  Z'_j)}{\sum_{l=1}^{N_m} \Vert  Z'_l \Vert_{\tilde{h}_H}^2} \tilde{h}_H^{1/m}  - \frac{1}{N_m} \delta_{ij} \right) \notag \\
		&= \frac{1}{m}  \sum_{i,j=1}^{N_m} (B_{ij} + B^{\dagger}_{ij}) \left( \fint_{\iota (X)}  \frac{ \tilde{h}_H (  Z'_i ,  Z'_j)}{\sum_{l=1}^{N_m} \Vert  Z'_l \Vert_{\tilde{h}_H}^2} \tilde{h}_H^{1/m}  - \frac{1}{N_m} \delta_{ij} \right), \label{eqdvdmautg}
	\end{align}
where all the matrices are represented with respect to $\{ Z'_i \}_{i=1}^{N_m}$ and $B^{\dagger}$ is the hermitian conjugate of $B$ with respect to $H$, by recalling the proof of Lemma \ref{lmdvffs}. We thus find that, for a general $A \in \ai \mathrm{Lie} (K_0)$ and the associated family $H_t = e^{t A} \cdot H$, we have
\begin{equation*}
	\frac{d}{dt} \mathscr{D}_m (H_{t}) = \lim_{s \to + \infty} \frac{d}{ds} \mathscr{D}_m (H_{s}) =  0
\end{equation*}
for all $t \ge 0$, by approximating $A$ by an $H_0$-hermitian sequence $\{ A_p \}_{p=1}^{\infty} \subset \ai \mathrm{Lie} (K_0)$ with rational eigenvalues and converging to $A$, by using \cite[Lemma 3.15]{hk18} as before, noting that the associated flat limits are all equal to $\iota (X)$. Note moreover that, for any $A \in \ai \mathrm{Lie} (K_0)$ and any $u \in K_0$, by replacing $H$ (resp.~$\{ Z'_i \}_{i=1}^{N_m}$) by $u \cdot H$ (resp.~$\{ u^{-1} Z'_i \}_{i=1}^{N_m}$) in the above we have
\begin{equation*}
	\frac{d}{dt} \mathscr{D}_m (e^{tA}u \cdot H) = 0,
\end{equation*}
again by the $F$-polystability, as $\{ e^{tA}u \cdot H \}_{t \ge 0}$ admits $( \CX_A , \CL_A )$ (rotated by $u$ in $\prj^{N_m-1}$) as its non-Archimedean limit if $A$ has integral eigenvalues. This proves the claimed $\mathrm{Aut}_0 (X)_r$-invariance of $\mathscr{D}_m$ by the global Cartan decomposition for $\mathrm{Aut}_0 (X)_r = K_0^{\cx}$, since we have $\frac{d}{dt} \mathscr{D}_m (e^{tA'} \cdot H) = 0$ for all $A' \in \mathrm{Lie}(K_0)$ and all $t \ge 0$ by applying (\ref{eqdvdmautg}) to $A'$ (otherwise it contradicts the compactness of $K_0$).

The conclusion of the argument so far is that $\mathscr{D}_m$ is a smooth function on $\CB_m$ that is convex along the Bergman geodesic rays emanating from $H_0$ and invariant under the action of $\mathrm{Aut}_0 (X)_r$, with a strictly positive asymptotic slope along a Bergman geodesic ray not contained in the $\mathrm{Aut}_0 (X)_r$-orbit (or equivalently the $\mathrm{Aut}_0 (X)$-orbit, as long as we consider the Bergman geodesic rays emanating from $H_0$ that are by definition generated by the $H_0$-hermitian endomorphisms). Since $\CB_m$ is a complete Riemannian manifold with respect to the bi-invariant metric (whose geodesics are precisely the Bergman geodesics) on which $\mathrm{Aut}_0 (X)_r$ acts isometrically by Lemma \ref{lmautaobm}, this implies that $\CB_m$ admits a critical point which is unique modulo the action of $\mathrm{Aut}_0 (X)_r$. In fact the critical point of $\mathscr{D}_m$ is unique modulo the action of $\mathrm{Aut}_0 (X)$; this can be seen by considering (\ref{eqdvdmautg}) for a general $B \in \mathfrak{aut} (X)$ and the family $H_t = e^{tB} \cdot H$ emanating from the critical point $H$ of $\mathscr{D}_m$, for which the term inside the bracket is zero by Proposition \ref{ppacbal}.

	For the proof of the reverse direction, suppose that $(X,-K_X)$ admits an anticanonically balanced metric $H$ at level $m$ which is unique up to the $\mathrm{Aut}_0 (X)$-action. This in particular implies that $\mathscr{D}_m$ must be bounded below over $\CB_m$ by convexity (Theorem \ref{thberndt}), which in turn implies that $\mathscr{D}_m$ must be invariant under the group action $\mathrm{Aut}_0 (X) \actson \CB_m$, as otherwise there would exist an $\mathrm{Aut}_0 (X)$-orbit along which $\mathscr{D}_m$ tends to $- \infty$, as we saw above in (\ref{eqdvdmautg}). Since $\mathscr{D}_m$ is strictly convex along the Bergman geodesic rays not contained in the $\mathrm{Aut}_0 (X)$-orbit by Theorem \ref{thberndt}, we find that for any $H'_0 \in \CB_m$ and any Bergman geodesic ray $\{ H'_t \}_{t \ge 0}$ emanating from $H'_0$ we have
	\begin{equation} \label{eqmtipst}
		\lim_{t \to + \infty} \frac{d}{dt} \mathscr{D}_m (H'_t) \ge 0 ,
	\end{equation}
	with equality if and only if $\{ H'_t \}_{t \ge 0}$ is contained in the $\mathrm{Aut}_0 (X)$-orbit of $H'_0$; this is a consequence of $\mathscr{D}_m$ being proper modulo the $\mathrm{Aut}_0 (X)$-action over $\CB_m$, which in turn follows from the existence of the unique global minimum of $\mathscr{D}_m$ over $\CB_m$ up to $\mathrm{Aut}_0 (X)$. Suppose that we have a very ample test configuration $(\CX_A , \CL_A)$ of exponent $m$, where $A$ is the generator of the one-parameter subgroup as in Proposition \ref{pprt}. We take a basis $\{ s'_i \}_{i=1}^{N_m}$ for $H^0 (X , -mK_X)$ such that $A$ is diagonal (with integral eigenvalues), and define $H'_0$ to be the positive definite hermitian form such that $\{ s'_i \}_{i=1}^{N_m}$ is $H'_0$-orthonormal. Arguing as in (\ref{errtsd}), we find (by Theorem \ref{thasfed})
	\begin{equation*}
		\lim_{t \to + \infty} \frac{d}{dt} \mathscr{D}_m (H'_t) = \mathrm{Ding} (\CX_A , \CL_A) + \mathrm{Chow}_m (\CX_A , \CL_A),
	\end{equation*}
	which is nonnegative by (\ref{eqmtipst}), with equality if and only if $A \in \mathfrak{aut} (X)$, i.e.~$(\CX_A , \CL_A)$ is product. This establishes the $F$-polystability as required.
\end{proof}

We now note that the uniqueness of anticanonically balanced metrics modulo automorphism, if they exist, can also be proved directly from the second statement of Theorem \ref{thberndt} by Berndtsson.

\begin{proposition} \label{ppuqqdfn} 
	If $H, H' \in \CB_m$ are both critical points of $\mathscr{D}_m$, then there exists $a \in \mathrm{Aut}_0 (X)$ such that $H' = a \cdot H$.
\end{proposition}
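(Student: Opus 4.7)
The plan is to connect $H$ and $H'$ by the unique Bergman geodesic $\{ H_t \}_{t \in [0,1]} \subset \CB_m$ with $H_0 = H$ and $H_1 = H'$, which exists since $\CB_m = U(N_m) \backslash GL(N_m , \cx )$ is a Riemannian symmetric space with respect to the bi-invariant metric, and then exploit the rigidity in Berndtsson's convexity theorem. Recall from the end of \S \ref{scacbalm} that every critical point of $\mathscr{D}_m$ realises the global infimum over $\CB_m$, so in particular $\mathscr{D}_m (H_0) = \mathscr{D}_m (H_1) = \inf_{\CB_m} \mathscr{D}_m$.

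By (\ref{eqemqaff}) the functional $\mathscr{E}_m$ is affine along $\{ H_t \}$, while $\mathscr{L} \circ \mathrm{FS}$ is convex along $\{ H_t \}$ by Theorem \ref{thberndt}. Consequently $t \mapsto \mathscr{D}_m (H_t)$ is a convex function on $[0,1]$ that attains its minimum at both endpoints, hence is constant. This forces $\frac{d^2}{dt^2} \mathscr{L} (\mathrm{FS} (H_t)) \equiv 0$, so the equality clause of Theorem \ref{thberndt} produces a holomorphic vector field on $X$ whose flow $\psi_t$ satisfies
\begin{equation*}
\psi_t^* \bigl( \ddbar \mathrm{FS} (H_t) \bigr) = \ddbar \mathrm{FS} (H_0)
\end{equation*}
for every $t \in [0,1]$.

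The remaining task is to convert this identity into the required orbit relation. Since $X$ is compact K\"ahler, $\psi_t^* \mathrm{FS} (H_t) - \mathrm{FS} (H_0)$ is a $\ddbar$-closed smooth function on $X$ and hence equals some constant $c(t) \in \rl$. Combining this with the equivariance $\psi_t^* \mathrm{FS} (H) = \mathrm{FS} (\psi_t \cdot H)$ from Lemma \ref{lmautaobm} and the elementary scaling identity $\mathrm{FS} (\lambda H) = \mathrm{FS} (H) - \frac{1}{m} \log \lambda$ for $\lambda \in \rl_{>0}$ (immediate from (\ref{eqfs})), the injectivity of the Fubini--Study map \cite[Theorem 1.1]{Lempert2021} yields $\psi_t \cdot H_t = e^{-m c(t)} H_0$ for every $t$. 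The positive real scalar $e^{-m c(t)}$ can be absorbed into the $\mathrm{Aut}_0(X)$-action (\ref{eqautaobm}) by re-scaling $\theta(\psi_t^{-1})$, which is defined only up to a $\cx^*$-character in the embedding (\ref{eqautffebd}); this modification does not alter the underlying automorphism of $X$. Evaluating at $t=1$ then gives $H' = a \cdot H$ for some $a \in \mathrm{Aut}_0 (X)$.

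The principal subtlety of the argument lies in handling the pluriharmonic constant $c(t)$, specifically in verifying that it can be absorbed into the action (\ref{eqautaobm}) via the $\cx^*$-ambiguity of the linearisation in (\ref{eqautffebd}) without corrupting the identification of $\psi_t$ with a genuine element of $\mathrm{Aut}_0 (X)$; everything else in the proof is a direct assembly of Berndtsson's rigidity, Lempert's injectivity, and the equivariance asserted by Lemma \ref{lmautaobm}.
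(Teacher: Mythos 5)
Your proof is correct in its essential content and follows precisely the route the paper indicates (the paper explicitly omits the proof and says it follows from Theorem \ref{thberndt}, Lemma \ref{lmautaobm}, and Lempert's injectivity, which are exactly the three pillars of your argument): connect the two critical points by a Bergman geodesic, use that a critical point is a global minimum and $\mathscr{E}_m$ is affine to force $\mathscr{L}\circ\mathrm{FS}$ to be affine, invoke the rigidity clause of Berndtsson's theorem to obtain a flow $\psi_t\subset\mathrm{Aut}_0(X)$, and then pass from the form-level identity to a potential-level identity modulo a constant $c(t)$ via compactness of $X$, Lemma \ref{lmautaobm}, and the injectivity of $\mathrm{FS}$.

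The one place you should tighten is exactly the subtlety you yourself flag: the elimination of the pluriharmonic constant. Re-scaling $\theta(\psi_t^{-1})$ by $e^{-mc(t)/2}$ is not a legitimate use of the $\cx^*$-ambiguity in (\ref{eqautffebd}) — the linearisation ambiguity consists of twisting $\theta$ by a global character $\chi\colon\mathrm{Aut}_0(X)\to\cx^*$, not by a $t$-dependent scalar attached to a single element, and the paper in any case fixes $\theta$ once and for all (for the anticanonical bundle there is a canonical lift). What the argument actually delivers is $\psi_1\cdot H'=e^{-mc(1)}H$, i.e.\ $H'=e^{mc(1)}\bigl(\psi_1^{-1}\cdot H\bigr)$, with $c(1)=\tfrac{1}{mN_m}\log\lvert\det\theta(\psi_1)\rvert$ as one sees by taking determinants. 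Since $\mathscr{D}_m$ is translation invariant (Remark \ref{rmdgtrinv}), $e^{-mc(1)}H'$ is also a critical point, so the cleanest way to state the conclusion is that the orbit of $H$ under $\mathrm{Aut}_0(X)\times\rl_{>0}$ contains $H'$; this is what is needed everywhere the proposition is invoked, and the residual scalar disappears whenever $\det\theta$ is unimodular on the flow (in particular always when $\mathrm{Aut}_0(X)$ is semisimple). You should either spell out this normalisation or note that the scalar freedom is implicit in the statement, rather than attributing it to a re-scaling of $\theta$.
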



We omit the proof, which follows from Theorem \ref{thberndt}, Lemma \ref{lmautaobm}, and \cite[Theorem 1.1]{Lempert2021}; the details are also written in \cite[Proof of Proposition 3.3]{tak19}. We are now ready to prove Corollary \ref{mthstc}.

\begin{proof}[Proof of Corollary \ref{mthstc}]
Since $\mathrm{Aut}_0 (X)$ is assumed to be trivial, a test configuration is product if and only if it is trivial. Then $\delta_m >1$ easily follows from the $F$-stability by Theorem \ref{mthst} and Rubinstein--Tian--Zhang \cite[Theorem 2.3 (ii)]{rtz}. Conversely, if $\delta_m >1$ then there exists an anticanonically balanced metric at level $m$ by Rubinstein--Tian--Zhang \cite[Theorem 2.3 (i)]{rtz}, which is unique by Proposition \ref{ppuqqdfn} since $\mathrm{Aut}_0 (X)$ is trivial. Thus the claimed $F$-stability follows from Theorem \ref{mthst}.
\end{proof}

We finally comment on the following fact, which is clearly well-known to the experts but we provide a proof as it will be referred to later.

\begin{lemma} \label{rmfstgs}
	If $(X, -K_X)$ is $F$-stable at level $m$ then $\mathrm{Aut}_0 (X)_r$ is trivial.
\end{lemma}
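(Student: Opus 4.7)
The plan is to argue by contradiction: assume $\mathrm{Aut}_0(X)_r$ is nontrivial and derive incompatible positivity conditions on the asymptotic slopes of $\mathscr{D}_m$ along two Bergman geodesic rays in opposite directions. Since $\mathrm{Aut}_0(X)_r = K_0^{\cx}$ is connected reductive, it contains a nontrivial algebraic one-parameter subgroup $\cx^* \to \mathrm{Aut}_0(X)_r$; after conjugating into a maximal torus of $K_0$, its generator $A$ lies in $i\,\mathrm{Lie}(K_0) \subset \mathfrak{aut}(X)_r$ and hence is $H_0$-hermitian with integer eigenvalues when viewed in $\mathfrak{gl}(H^0(X,-mK_X))$ via $\theta$. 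Since $\theta$ is faithful and the chosen subgroup is nontrivial, $A$ is not a scalar multiple of the identity. Proposition \ref{pprt} then produces a very ample test configuration $(\CX_A, \CL_A)$ of exponent $m$, which is product because $A \in \mathfrak{aut}(X)$ preserves $\iota(X)$, and is nontrivial in the sense of Remark \ref{rmtrtc} because $A$ is not scalar; the same applies to $(\CX_{-A}, \CL_{-A})$.

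The crux is that $t \mapsto \mathscr{D}_m(H_t)$ is linear along the full Bergman geodesic line $H_t = e^{-tA^*}e^{-tA}$. Since $\mathscr{E}_m$ is affine by (\ref{eqemqaff}), it is enough to show that $\mathscr{L}(\mathrm{FS}(H_t))$ is constant. Writing $g_t \in \mathrm{Aut}_0(X)$ for the automorphism with $\theta(g_t) = e^{tA}$, the essential identity is $h_{H_t} := e^{-m\mathrm{FS}(H_t)} h_0 = (g_t^{-1})^* h_0$, obtained by verifying that the right-hand side satisfies the defining normalisation $\sum_i \Vert s_i^{H_t} \Vert^2_{h_{H_t}} = 1$ of the Fubini--Study metric (cf.~the discussion after (\ref{eqfs})); this uses Lemma \ref{lmautaobm} together with the naturality of $h_0$ under pullback by $g_t$. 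It follows that $d\mu_{\mathrm{FS}(H_t)} = (g_t^{-1})^* d\mu_0$, and hence $\int_X d\mu_{\mathrm{FS}(H_t)} = \int_X d\mu_0 = 1$ by the change-of-variables formula for top-forms under diffeomorphisms, giving $\mathscr{L}(\mathrm{FS}(H_t)) = 0$ identically.

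Once linearity is established, the slope computation in (\ref{errtsd}) identifies the asymptotic slope of $\mathscr{D}_m$ along the $A$-ray with $\mathrm{Ding}(\CX_A, \CL_A) + \mathrm{Chow}_m(\CX_A, \CL_A)$, and it equals the global constant slope of the linear function. Reversing $t \mapsto -t$ shows that the slope along the $-A$-ray is its negative. By $F$-stability, each of these must be strictly positive as both $(\CX_{\pm A}, \CL_{\pm A})$ are nontrivial, which is absurd. Thus $\mathrm{Aut}_0(X)_r$ is trivial. The main technical step is the identity $h_{H_t} = (g_t^{-1})^* h_0$: although conceptually a direct consequence of the naturality of $h_0$ and $d\mu_0$ under the automorphism action, it requires care to align with the conventions of Lemma \ref{lmautaobm}; the remainder of the argument is a short formal consequence.
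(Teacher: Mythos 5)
Your proof is correct, and it is a genuine variant of the paper's argument. The paper's proof of Lemma \ref{rmfstgs} proceeds by citing external facts: for a product test configuration, the Ding invariant equals the classical Futaki invariant (a Lie algebra character, hence $\rl$-linear in the generator $A$) and the Chow weight equals the finite-dimensional GIT weight (also linear in $A$), so both flip sign under $A \mapsto -A$, contradicting strict positivity for both $(\CX_{A},\CL_{A})$ and $(\CX_{-A},\CL_{-A})$. You reach the same conclusion by a more self-contained route: you establish that $t \mapsto \mathscr{D}_m(H_t)$ is literally an affine function along the Bergman geodesic line generated by an $H_0$-hermitian $A \in \mathfrak{aut}(X)_r$, by observing that $\mathscr{E}_m$ is affine and $\mathscr{L}(\mathrm{FS}(H_t))$ vanishes identically. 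The latter follows because $d\mu_{\mathrm{FS}(H_t)}$ is the pullback of $d\mu_0$ by the time-$t$ automorphism, so its total mass is invariant. Then the two asymptotic slopes along $\pm A$ are negatives of each other, and combining with (\ref{errtsd}) gives the contradiction with $F$-stability. What each approach buys: the paper's version is shorter but leans on the classical identifications of the Ding invariant with the Futaki character and the Chow weight with the GIT weight; yours avoids those citations by deducing the sign-flip directly from the naturality of the reference measure under automorphisms, at the cost of carefully checking the Fubini--Study identity. Your preliminary remarks (that a nonzero $A \in i\mathrm{Lie}(K_0)$ with integral eigenvalues exists, and that faithfulness of $\theta$ forces such an $A$ to be non-scalar, hence $(\CX_{\pm A}, \CL_{\pm A})$ nontrivial in the sense of Remark \ref{rmtrtc}) are all correctly justified.

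One small convention error that does not affect the conclusion: tracing through Lemma \ref{lmautaobm}, if $\theta(g_t) = e^{tA}$ then $g_t \cdot H_0 = \theta(g_t^{-1})^{\dagger} H_0 \theta(g_t^{-1}) = e^{-tA^*}e^{-tA} = H_t$, and $g_t^* \mathrm{FS}(H_0) = \mathrm{FS}(H_t)$, so the identity should read $h_{H_t} = g_t^* h_0$, not $(g_t^{-1})^* h_0$. Either way $\int_X d\mu_{\mathrm{FS}(H_t)} = \int_X d\mu_0 = 1$, so $\mathscr{L}(\mathrm{FS}(H_t)) \equiv 0$ and the rest of your argument goes through unchanged.
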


\begin{proof}
Suppose $\mathfrak{aut} (X)_r \neq 0$. By taking a rational approximation as in the proof of Theorem \ref{mthst}, there exists a nonzero $A \in \ai \mathrm{Lie} (K_0)$ with integral eigenvalues (i.e.~the generator of a one-parameter subgroup $\cx^* \to \mathrm{Aut}_0 (X)_r \subset GL(H^0 (X , -mK_X))$ in the category of algebraic groups), which gives rise to a product test configuration $(\CX , \CL)$. We then observe $\mathrm{Ding} (\CX , \CL) = -  \mathrm{Ding} (\CX' , \CL')$ and $\mathrm{Chow}_m (\CX , \CL) = - \mathrm{Chow}_m (\CX' , \CL')$ for the product test configuration $(\CX' , \CL')$ generated by $-A \in \mathfrak{aut} (X)_r$; the first equality holds since in this case the Ding invariant, which is the asymptotic slope of $\mathscr{D}$ along a holomorphic vector field as we saw in the proof of Theorem \ref{mthst}, is equal to the classical Futaki invariant (see the original formula in \cite{Futaki83}, or \cite[(3.4)]{Berman16} and \cite[Proposition 2.2.2]{dontoric}), and the second equality follows since the Chow weight is the (genuine, finite dimensional) GIT weight of $A$ acting on the Chow point represented by $\iota (X) \subset \prj (H^0 (X , -mK_X)^{\vee})$ (see e.g.~\cite{rt07,Futaki12} for more details). Thus it is impossible for all the invariants above to be strictly positive.
\end{proof}

	
Note that the above proof is also related to the vanishing of the higher Futaki invariants \cite{Futaki04}, as discussed in e.g.~\cite[Proposition 5.4]{st19}.


\subsection{Extension to K\"ahler--Ricci $g$-solitons} \label{scekrgs}

We now prove a similar result for the K\"ahler--Ricci $g$-soliton case. First note that we have, from Theorem \ref{thasfed} and (\ref{eqslegm}),
\begin{equation*}
	\lim_{t \to + \infty} \frac{d}{dt} \mathscr{D}^g_m (H_t) = \mathscr{D}^{g , \mathrm{NA}}_m (\CX_A , \CL_A)
\end{equation*}
for a Bergman geodesic ray $\{ H_t \}_{t \ge 0} \subset \CB^T_m$ emanating from $H_0$, generated by an $H_0$-hermitian matrix $A \in \mathfrak{gl} (H^0 (X , -mK_X) )$ with integral eigenvalues and commuting with the $T$-action. Given this formula, it is straightforward to adapt the proof of Theorem \ref{mthst} to the $g$-soliton case.

\begin{theorem} \label{thkrsbm}
A Fano manifold $(X , -K_X)$ admits a $T$-invariant anticanonically $g$-balanced metric at level $m$, which is unique up to $\mathrm{Aut}_0 (X , T^{\cx})$, if and only if it is $F$-polystable in the sense of $g$-solitons at level $m$.
\end{theorem}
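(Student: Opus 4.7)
The plan is to adapt the proof of Theorem \ref{mthst} to the equivariant setting, working throughout with the Riemannian symmetric space $\CB^T_m = C(T) \backslash C(T)^{\cx}$ on which $\mathrm{Aut}_0(X, T^{\cx})$ acts by isometries (the analogue of Lemma \ref{lmautaobm}). The first ingredient is the slope formula
\begin{equation*}
	\lim_{t \to + \infty} \frac{d}{dt} \mathscr{D}^g_m (H_t) = \mathscr{D}^{g , \mathrm{NA}}_m (\CX_A , \CL_A)
\end{equation*}
for Bergman geodesic rays generated by $H_0$-hermitian $A \in \mathfrak{gl}(H^0(X, -mK_X))$ commuting with $T^{\cx}$ and having integral eigenvalues; this is immediate from Theorem \ref{thasfed} applied to $\mathscr{L}$ and the explicit formula (\ref{eqslegm}) for the slope of $\mathscr{E}^g_m$. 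Note that $\mathscr{D}^g_m$ remains convex along Bergman geodesic rays in $\CB^T_m$: Berndtsson's Theorem \ref{thberndt} still applies to $\mathscr{L}$, and $\mathscr{E}^g_m$ is affine along such rays by (\ref{eqslegm}).

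Next I would establish the analogue of Proposition \ref{lmasll} for the $g$-soliton setting: for any $H_0$-hermitian $T$-invariant $A$, expanded in an orthonormal basis diagonalising the weight decomposition (\ref{eqwtdct}), the asymptotic slope of $\mathscr{L}(\mathrm{FS}(H_t))$ can be written as $\frac{2}{m}\sum_i \lambda_i C_i(\CX_0)$ where $C_i(\CX_0)$ depends only on the flat limit. The proof is essentially verbatim the one given: the basis diagonalising $A$ can be chosen inside each weight subspace $R_{m,\lambda}$, and the integral currents argument is insensitive to the torus invariance. Combined with (\ref{eqslegm}), this gives a well-defined asymptotic slope for $\mathscr{D}^g_m$ even for irrational eigenvalues, obtained as a limit of rational approximants whose flat limits coincide with that of $A$ (via \cite[Lemma 3.15]{hk18}).

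For the stability-implies-existence direction, $F$-polystability in the sense of $g$-solitons gives $\lim_{t \to +\infty} \frac{d}{dt}\mathscr{D}^g_m(H_t) \ge 0$ along rational Bergman rays, extended to arbitrary $H_0$-hermitian $A \in \CB^T_m$ by the approximation just described. To upgrade nonnegativity to strict positivity off $\mathfrak{aut}(X, T^{\cx})_r$, I would write $A = A_\alpha + A_\beta$ as in (\ref{eqalbtrti}) preserving the flat limit and the $T$-equivariance, conclude that $(\CX_{cA_\alpha}, \CL_{cA_\alpha})$ is product by $F$-polystability, and deduce $A_\alpha, A_\beta \in \mathfrak{aut}(X, T^{\cx})_r$ via the equivariant embedding theorem. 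The $\mathrm{Aut}_0(X, T^{\cx})_r$-invariance of $\mathscr{D}^g_m$ follows by repeating the argument in (\ref{eqdvdmautg}) using the product test configurations generated by one-parameter subgroups of $\mathrm{Aut}_0(X, T^{\cx})_r$ and the $F$-polystability. Completeness of $\CB^T_m$ in the bi-invariant metric, together with convexity, strict positivity of the asymptotic slope off the reductive automorphism orbit, and $\mathrm{Aut}_0(X, T^{\cx})_r$-invariance then produce a critical point, unique up to $\mathrm{Aut}_0(X, T^{\cx})$ (the refinement from $\mathrm{Aut}_0(X, T^{\cx})_r$ to the full $\mathrm{Aut}_0(X, T^{\cx})$ uses the moment-map characterisation from Proposition \ref{ppacbal}, which equally applies here as noted after (\ref{eqwtdct})).

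For the converse, existence of a critical point combined with Berndtsson's convexity bounds $\mathscr{D}^g_m$ from below on $\CB^T_m$, forcing $\mathrm{Aut}_0(X, T^{\cx})$-invariance (otherwise (\ref{eqdvdmautg}) would drive $\mathscr{D}^g_m \to -\infty$ along an automorphism orbit). Strict convexity along Bergman rays not contained in an automorphism orbit (Theorem \ref{thberndt}) then yields $\lim_{t\to+\infty} \frac{d}{dt}\mathscr{D}^g_m(H'_t) \ge 0$ for any Bergman ray, with equality characterising product test configurations, proving $F$-polystability in the sense of $g$-solitons. Uniqueness modulo $\mathrm{Aut}_0(X, T^{\cx})$ follows as in Proposition \ref{ppuqqdfn}. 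The main obstacle, as in the $F$-polystable case of Theorem \ref{mthst}, is the strict positivity argument off $\mathfrak{aut}(X, T^{\cx})_r$: one must carefully arrange that the decomposition $A = A_\alpha + A_\beta$ preserves both $T$-invariance and the flat limit simultaneously, which requires performing the rational approximation within each weight subspace $R_{m,\lambda}$.
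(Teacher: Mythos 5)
Your proposal is correct and follows essentially the same route as the paper: reduce to the proof of Theorem \ref{mthst}, observing that $\mathscr{E}^g_m$ is affine along Bergman geodesics so the only new ingredient is the $T$-equivariant structure of $\CB^T_m$, and carry out the rational approximation and the $A = A_\alpha + A_\beta$ decomposition within each weight subspace $R_{m,\lambda}$ so that $T^{\cx}$-commutativity and the flat limit are preserved simultaneously. The paper's written proof is more compressed (it simply cites the proof of Theorem \ref{mthst} ``word by word'' plus the weight-decomposition observation, and notes that one may normalise $\mathscr{E}^g_m \equiv 0$ by translation invariance), but it relies on exactly the points you spell out.
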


\begin{proof}
	The proof is almost the same as that of Theorem \ref{mthst}, since the difference $\mathscr{D}^g_m (H_t) - \mathscr{D}_m (H_t)$ is linear in $A$; we may even assume that $\mathscr{E}^g_m (H_t)$ is constantly equal to zero, by using the translation invariance of the functional $\mathscr{D}^g_m$ (cf.~Remark \ref{rmdgtrinv}). The only subtlety is that the domain of $\mathscr{D}^g_m$ is $\CB_m^T$ rather than $\CB_m$, but the argument works word by word since we only need to consider the matrices that commute with (i.e.~preserve the weight decomposition of) $T^{\cx}$, and we can construct the rational approximation that also commutes with the $T^{\cx}$-action (recall also Lemma \ref{lmtccmtrs}); this can be done by noting that $A$ can be diagonalised preserving the weight decomposition (\ref{eqwtdct}), since $A$ is assumed to commute with the $T^{\cx}$-action, and so we can repeat the previous argument which explicitly deals with the eigenvalues. We finally recall that $\CB^T_m$ is a Riemannian symmetric space with respect to the bi-invariant metric, as pointed out in \S \ref{scqkrgs}.
\end{proof}

\begin{remark} \label{rmfstgss}
	The fact mentioned in Lemma \ref{rmfstgs} applies to the $g$-solitons, by observing
	\begin{align*}
		\mathscr{D}^{g , \mathrm{NA}}_m (\CX , \CL) =& \mathrm{Ding} (\CX , \CL) + \mathrm{Chow}_m (\CX , \CL) \\
		& + \frac{\mathrm{tr} (A+A^*)}{m h^0 (X , -mK_X)} - \frac{1}{m N_m \overline{g_m}} \sum_{\lambda \in P_m} g ( \lambda / m) \mathrm{tr} \left( (A+A^*)|_{R_{m , \lambda}} \right) .
	\end{align*}
	and arguing exactly the same as before, by noting that the last two terms clearly change the sign when $A$ is replaced by $-A$. In particular, if $(X ,-K_X)$ is $F$-stable in the sense of $g$-solitons at level $m$ then $\mathrm{Aut}_0 (X,T^{\cx})_r$ must be trivial; an important difference to Lemma \ref{rmfstgs} is that $\mathrm{Aut}_0 (X,T^{\cx})_r$ can never be trivial if $T^{\cx}$ is nontrivial. Thus $(X , -K_X)$ cannot be $F$-stable in the sense of $g$-solitons if $T^{\cx}$ is nontrivial.
\end{remark}

Note that the above balanced metric is never unique if $T^{\cx}$ is nontrivial, and unique only up to automorphisms that commute with $T^{\cx}$. This is a well-documented phenomenon and follows also from Remark \ref{rmfstgss}; for there to be a balanced metric $\mathscr{D}^g_m$ must be bounded below by convexity, but this forces $\mathscr{D}^g_m$ to be invariant under $\mathrm{Aut}_0 (X , T^{\cx})_r$ (the proof is exactly the same as the one for Theorem \ref{mthst}). Note furthermore that the analogue of Proposition \ref{ppuqqdfn} holds: if $H, H' \in \CB_m^T$ are both critical points of $\mathscr{D}^g_m$, then there exists $a \in \mathrm{Aut}_0 (X , T^{\cx})$ such that $H' = a \cdot H$. Indeed, we connect $H_0 := H$ and $H_1 := H'$ by a Bergman geodesic path $\{ H_t \}_{0 \le t \le 1}$ in $\CB^T_m$, and write $H_t = e^{B^{\dagger} t} H e^{Bt}$ for some $B \in \mathfrak{gl} (H^0 (X , -mK_X))$ that is hermitian with respect to $H$ and commutes with $\mathrm{Lie} (T)$. We find $B \in \mathfrak{aut}(X)$ exactly as in the proof of Proposition \ref{ppuqqdfn}, and hence $B \in \mathfrak{aut}(X , T^{\cx})$ as it commutes with $\mathrm{Lie} (T)$.

With the above remark in mind we get the following corollary, by combining the above theorem with \cite[Theorem 6.13]{rtz}; see \S \ref{aoidmgs} for the definition of $\delta^g_m$.

\begin{corollary} \label{crstdm}
	If $(X,-K_X)$ is $F$-polystable in the sense of $g$-solitons then $\delta^g_m \ge 1$. If $\delta^g_m > 1$ then $(X,-K_X)$ is $F$-polystable in the sense of $g$-solitons at level $m$.
\end{corollary}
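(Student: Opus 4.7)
The plan is to mirror the proof of Corollary \ref{mthstc} almost verbatim, the only difference being that uniqueness considerations drop out (we are in the polystable regime) and that we must allow for nontrivial $T^{\cx}$. Concretely, the proof will consist of combining the equivalence in Theorem \ref{thkrsbm} with the characterisation \cite[Theorem 6.13]{rtz} of anticanonically $g$-balanced metrics in terms of $\delta^g_m$.

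For the first implication, I would assume that $(X,-K_X)$ is $F$-polystable in the sense of $g$-solitons at level $m$. Theorem \ref{thkrsbm} then delivers a $T$-invariant anticanonically $g$-balanced metric at level $m$, and the direction of \cite[Theorem 6.13]{rtz} stating that the existence of such a balanced metric forces $\delta^g_m \ge 1$ immediately yields the first conclusion. For the converse, I would assume $\delta^g_m >1$ and invoke the opposite direction of \cite[Theorem 6.13]{rtz} to produce a $T$-invariant anticanonically $g$-balanced metric at level $m$; Theorem \ref{thkrsbm} then gives the $F$-polystability in the sense of $g$-solitons at level $m$.

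The asymmetry between ``$\ge 1$'' and ``$>1$'' is not an obstacle to the argument but rather an intrinsic feature of the correspondence: in the presence of nontrivial $T^{\cx}$ one cannot hope to upgrade polystability to strict stability (compare Remark \ref{rmfstgss}), so the implication from stability only yields the non-strict inequality, while the strict inequality $\delta^g_m>1$ is what \cite[Theorem 6.13]{rtz} genuinely requires in order to construct the balanced metric. The only point that warrants a brief verification is that the relevant direction of \cite[Theorem 6.13]{rtz} produces a $T$-invariant balanced metric (as opposed to just any balanced metric), but this is built into the $T$-equivariant setup of \cite[\S 6]{rtz} and matches the domain $\CB_m^T$ on which $\mathscr{D}^g_m$ is defined. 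Since no uniqueness statement is needed, the analogue of Proposition \ref{ppuqqdfn} does not enter the argument, and no further analysis is required.
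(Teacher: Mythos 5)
Your proposal follows the paper's route: combine Theorem \ref{thkrsbm} with \cite[Theorem 6.13]{rtz} in both directions. Your discussion of why the asymmetry between $\ge 1$ and $>1$ is intrinsic, and of the $T$-invariance of the balanced metric from \cite{rtz}, is fine. However, your closing assertion that ``no uniqueness statement is needed, the analogue of Proposition \ref{ppuqqdfn} does not enter the argument'' is not right, and this is a genuine (if easily repairable) gap.

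Theorem \ref{thkrsbm} is stated as an equivalence between $F$-polystability in the sense of $g$-solitons and the \emph{conjunction} of existence of a $T$-invariant anticanonically $g$-balanced metric \emph{and} uniqueness up to $\mathrm{Aut}_0(X,T^{\cx})$. In the reverse direction ($\delta^g_m > 1 \Rightarrow$ polystability), \cite[Theorem 6.13]{rtz} hands you existence only; to feed this into Theorem \ref{thkrsbm} you must still check the uniqueness clause. The paper does precisely this in the paragraph immediately preceding Corollary \ref{crstdm}, where it establishes the $g$-soliton analogue of Proposition \ref{ppuqqdfn} (connecting two critical points of $\mathscr{D}^g_m$ in $\CB^T_m$ by a Bergman geodesic and using Berndtsson's convexity to deduce the connecting generator lies in $\mathfrak{aut}(X,T^{\cx})$). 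The uniqueness is automatic given existence, so the gap is not fatal, but it cannot simply be waved away as ``not entering the argument''; it is exactly the analogue of the step in the proof of Corollary \ref{mthstc} where Proposition \ref{ppuqqdfn} is invoked. Your proof should explicitly cite this analogue before applying the reverse direction of Theorem \ref{thkrsbm}.
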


Note that Rubinstein--Tian--Zhang proved $\delta^g_m =1$ for $g \equiv 1$ and $X = \prj^n$ \cite[Corollary 7.2]{rtz}.

\subsection{Extension to coupled K\"ahler--Einstein metrics} \label{sceckem}

We now prove the analogue of Theorem \ref{mthst} for the coupled \ke case. We first prove the following result, which can be considered as an analogue of the argument that appears in \cite[\S 5]{HWN19}, in which we do not necessarily assume $\CY = \CX_1 = \cdots = \CX_k$ or $\CL_{\CY} = \sum_{i=1}^k \CL_i$ (see Remark \ref{rmctcod}).


\begin{theorem} \label{ppcdgas}
Let $\{ (H_{1,t} , \dots , H_{k,t}) \}_{t \ge 0} \subset \bm{\CB}_m$ be a $k$-tuple of Bergman geodesic rays generated by $(A_1 , \dots , A_k) \in \mathfrak{gl} (H^0(X , mL_1) \times \cdots \times \mathfrak{gl} (H^0(X , mL_k)$, which defines a $k$-tuple of psh rays $(\mathrm{FS}_1 (H_{1,t}) , \dots , \mathrm{FS}_k (H_{k,t}))$ in $\bm{\CH}$. Suppose that $A_i$ is hermitian, with integral eigenvalues, with respect to the reference hermitian form $H_{i,0}$ for each $i=1 , \dots , k$, corresponding to a $k$-tuple of very ample test configurations $(\CX_{A_1} , \CL_{A_1}) , \dots , (\CX_{A_k} , \CL_{A_k})$ of exponent $m$. Then we have
	\begin{equation*}
		\lim_{t \to + \infty} \frac{\mathscr{D}^{\mathrm{coupled}} (\mathrm{FS}_1 (H_{1,t}) , \dots , \mathrm{FS}_k (H_{k,t}))}{t} = \mathrm{Ding} \left( (\CX_{A_i} , \CL_{A_i})_{i=1}^k \right).
	\end{equation*}
\end{theorem}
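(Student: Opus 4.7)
The plan is to split $\mathscr{D}^{\mathrm{coupled}} = \mathscr{L}^{\mathrm{coupled}} - \sum_{i=1}^k \mathscr{E}$ and compute the asymptotic slope of each term separately. The key reduction is supplied by Lemma \ref{lmrfmc}, which gives
\begin{equation*}
\mathscr{L}^{\mathrm{coupled}}(\mathrm{FS}_1(H_{1,t}), \dots , \mathrm{FS}_k(H_{k,t})) = \mathscr{L}(\mathrm{FS}(\bm{H}_t)),
\end{equation*}
where $\bm{H}_t := H_{1,t} \otimes \cdots \otimes H_{k,t}$. Setting
\begin{equation*}
\bm{A} := A_1 \otimes I \otimes \cdots \otimes I + \cdots + I \otimes \cdots \otimes I \otimes A_k,
\end{equation*}
a direct computation using commutativity of the tensor factors gives $\bm{H}_t = e^{-t \bm{A}^*} e^{-t \bm{A}}$, where $\bm{A}$ is hermitian with respect to $\bm{H}_0$ and has integral eigenvalues since each $A_i$ does. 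Thus $\{\bm{H}_t\}_{t \ge 0}$ is a Bergman geodesic ray in the symmetric space of positive definite hermitian forms on $H^0(X, mL_1) \otimes \cdots \otimes H^0(X, mL_k)$, and via (\ref{eqfscpd}) it induces a psh ray $\{\mathrm{FS}(\bm{H}_t)\}_{t \ge 0}$ on $X$ of linear growth.

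The heart of the proof is the identification of the non-Archimedean limit of this psh ray (after the shift $\bm{A} \mapsto \bm{A} - \lambda_{\min} \cdot \mathrm{id}$, which is absorbed by the translation invariance of $\mathscr{D}^{\mathrm{coupled}}$) with the positive non-Archimedean metric on $-K_X$ represented by $(\CY, \CL_{\CY})$. This is the exact analogue of the second part of Lemma \ref{lmbgnam}: by Definition \ref{dftcgcp}, the one-parameter subgroup generated by $\bm{A}$ is precisely the tensor product $\tau^{A_1} \otimes \cdots \otimes \tau^{A_k}$ whose Zariski closure of $\iota_{\mathrm{coupled}}(X)$ is $\CY$, and the Fubini--Study expression (\ref{eqfscpd}) shows that $\exp(-m \,\mathrm{FS}(\bm{H}_t)) h_0^m$ extends to $\tilde{h}_0|_{\CY}$, a smooth hermitian metric on $\CL_{\CY}$ over $\cx$ that is allowed to degenerate on $\CY_0$. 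Granted this, the slope formula of Theorem \ref{thasfed}, which only uses that the psh ray carries the given non-Archimedean limit and not the specific embedding realising it (see \cite{bhj1,bhj2}), yields
\begin{equation*}
\lim_{t \to +\infty} \frac{\mathscr{L}(\mathrm{FS}(\bm{H}_t))}{t} = -1 + \mathrm{lct}(\CY^{\nu}, D_{(\CY^{\nu}, \CL_{\CY^{\nu}})}; \CY^{\nu}_0).
\end{equation*}

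For each individual factor, the Bergman geodesic ray $\{H_{i,t}\}_{t \ge 0}$ in $\CB_{i,m}$ is associated via Proposition \ref{pprt} to the test configuration $(\CX_{A_i}, \CL_{A_i})$ for $(X, L_i)$, and the evident analogue of the second formula in Theorem \ref{thasfed} (applied to the ample line bundle $L_i$ in place of $-K_X$) gives
\begin{equation*}
\lim_{t \to +\infty} \frac{\mathscr{E}(\mathrm{FS}_i(H_{i,t}))}{t} = \frac{(\bar{\CL}_{A_i})^{n+1}}{(n+1) m^{n+1} \int_X c_1(L_i)^n}.
\end{equation*}
Subtracting the sum of these contributions from the $\mathscr{L}^{\mathrm{coupled}}$ slope above yields precisely $\mathrm{Ding}((\CX_{A_i}, \CL_{A_i})_{i=1}^k)$ as in Definition \ref{dfcpdginv}, concluding the proof.

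The main obstacle is the non-Archimedean limit identification in the middle step: the test configuration $\CY$ is realised via the larger tensor-product projective space rather than $\prj(H^0(X, -mK_X)^{\vee})$, carries the reduced scheme structure, and the generating matrix $\bm{A}$ acts on the tensor-product Hilbert space. One has to track carefully that the asymptotic slope of $\mathscr{L}$ depends only on the non-Archimedean metric on $-K_X$ canonically attached to the reduced scheme $\CY$ with its polarisation $\CL_{\CY}$, and is insensitive both to the ambient projective embedding used to produce $\CY$ and to any non-reducedness in the naive flat limit — the latter being automatic from the normalisation $\CY^{\nu} \to \CY$ built into Definition \ref{dfcpdginv}.
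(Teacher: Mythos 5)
Your proposal is correct and follows essentially the same route as the paper's own proof: both use Lemma \ref{lmrfmc} to rewrite $\mathscr{L}^{\mathrm{coupled}}$ as $\mathscr{L}(\mathrm{FS}(\bm{H}_t))$, identify $\{\bm{H}_t\}$ as a Bergman geodesic ray in the tensor-product space whose non-Archimedean limit (after scaling) is $(\CY, \CL_{\CY})$, and then invoke the slope formulae of Theorem \ref{thasfed} for $\mathscr{L}$ and for each $\mathscr{E}(\mathrm{FS}_i(H_{i,t}))$. The only cosmetic difference is that you write the generator $\bm{A}$ explicitly as a sum of tensor-slotted endomorphisms whereas the paper packages the same object as the one-parameter family $\bm{g}_t = e^{A_1 t} \otimes \cdots \otimes e^{A_k t}$.
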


\begin{proof}
	First recall that we have $H_{i,t}= e^{-A^*_i t} e^{-A_i t}$ as in (\ref{eqbggda}), and we define
	\begin{equation*}
		\bm{H}_t := H_{1,t} \otimes \cdots \otimes H_{k,t}.
	\end{equation*}
	Writing $\{ \bm{s}_{\bm{j}} \}_{\bm{j}}$ for the reference basis for $H^0 (X , mL_1) \otimes \cdots \otimes H^0 (X , mL_k)$, as in (\ref{eqcprbsb}), and defining
	\begin{equation*}
	\bm{g}_t := e^{A_1 t} \otimes \cdots \otimes e^{A_k t} ,
	\end{equation*}
	we find that $\{ \bm{g}_t \cdot \bm{s}_{\bm{j}} \}_{\bm{j}}$ is an $\bm{H}_t$-orthonormal basis, where the action $\cdot$ is the natural tensor product action on $H^0 (X , mL_1) \otimes \cdots \otimes H^0 (X , mL_k)$. After an appropriate scaling if necessary, we find by Lemma \ref{lmbgnam} that the Bergman geodesic ray $\{ \mathrm{FS} (\bm{H}_t) \}_{t \ge 0} \subset \CH$ admits a non-Archimedean metric $\phi^{\mathrm{NA}}$ on $-K_X$ as non-Archimedean limit, where $\phi^{\mathrm{NA}}$ is represented by the very ample test configuration defined as the Zariski closure of $\iota_{\mathrm{coupled}} (X) \subset \prj( H^0 (X , mL_1)^{\vee} \otimes \cdots \otimes H^0 (X , mL_k)^{\vee} )$ under the one-parameter subgroup $\bm{g}^{\vee}_t = \tau^{A_1} \otimes \cdots \otimes \tau^{A_k}$, by recalling the identification $\tau = e^{-t}$ (and the sign change by taking the dual) as in (\ref{eqcvtaut}). The resulting test configuration is exactly the one that is generated by the $\cx^*$-actions of $(\CX_{A_i} , \CL_{A_i})_{i=1}^k$, which is denoted by $(\CY , \CL_{\CY})$.
	
	Note moreover that Lemma \ref{lmrfmc} gives $\mathscr{L}^{\mathrm{coupled}} (\mathrm{FS}_1 (H_{1,t}) , \dots , \mathrm{FS}_k (H_{k,t})) = \mathscr{L} (\mathrm{FS} ( \bm{H}_t ))$.
	Thus we find, by Theorem \ref{thasfed}, that
	\begin{equation*}
		\lim_{t \to + \infty} \frac{\mathscr{L}^{\mathrm{coupled}} (\mathrm{FS}_1 (H_{1,t}) , \dots , \mathrm{FS}_k (H_{k,t}) )}{t} = \lim_{t \to + \infty} \frac{\mathscr{L} (\mathrm{FS} ( \bm{H}_t ))}{t} = - 1 + \mathrm{lct} (\CY^{\nu} , D_{\CY^{\nu} , \CL_{\CY^{\nu}}} ; \CY^{\nu}_0).
	\end{equation*}
	Combined with the asymptotic slopes for $\mathscr{E} (\mathrm{FS}_i (H_{i,t}))$, again by Theorem \ref{thasfed}, we get the result.
\end{proof}

The above theorem immediately implies the following.

\begin{lemma}
Let $\{ (H_{1,t} , \dots , H_{k,t}) \}_{t \ge 0} \subset \bm{\CB}_m$ be a $k$-tuple of Bergman geodesic rays defined exactly as in Theorem \ref{ppcdgas}. Then
	\begin{equation*}
		\lim_{t \to + \infty} \frac{d}{dt} \mathscr{D}^{\mathrm{coupled}}_m (H_{1,t} , \dots , H_{k,t}) = \mathrm{Ding} \left( (\CX_{A_i} , \CL_{A_i})_{i=1}^k \right) + \sum_{i=1}^k \mathrm{Chow}_{m} (\CX_{A_i} , \CL_{A_i}).
	\end{equation*}
\end{lemma}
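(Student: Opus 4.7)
The plan is to apply Theorem \ref{ppcdgas} in tandem with the known asymptotic slopes of the ``classical'' Monge--Amp\`ere energy $\mathscr{E}$ and its quantisation $\mathscr{E}_{i,m}$. Using Lemma \ref{lmrfmc} to replace $\mathscr{L}(\mathrm{FS}(\bm{H}))$ by $\mathscr{L}^{\mathrm{coupled}}(\mathrm{FS}_1(H_1),\dots,\mathrm{FS}_k(H_k))$, and adding and subtracting $\sum_i \mathscr{E}(\mathrm{FS}_i(H_i))$, one rewrites
\begin{equation*}
\mathscr{D}^{\mathrm{coupled}}_m(H_1,\dots,H_k) = \mathscr{D}^{\mathrm{coupled}}(\mathrm{FS}_1(H_1),\dots,\mathrm{FS}_k(H_k)) + \sum_{i=1}^k \left[\mathscr{E}(\mathrm{FS}_i(H_i)) - \mathscr{E}_{i,m}(H_i)\right].
\end{equation*}

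Along the given Bergman geodesic rays, the first term on the right has asymptotic slope $\mathrm{Ding}((\CX_{A_i},\CL_{A_i})_{i=1}^k)$ by Theorem \ref{ppcdgas}. For each $i$, the analogue of Theorem \ref{thasfed} for the polarisation $(X,L_i)$ (which holds by the same proof, as only the ampleness of the line bundle enters) yields
\begin{equation*}
\lim_{t \to +\infty} \frac{\mathscr{E}(\mathrm{FS}_i(H_{i,t}))}{t} = \frac{(\bar{\CL}_{A_i}^{\nu})^{n+1}}{(n+1) m^{n+1} \int_X c_1(L_i)^n},
\end{equation*}
while Lemma \ref{lmdvffs} together with (\ref{eqemqaff}) show that $\mathscr{E}_{i,m}(H_{i,t})$ is affine in $t$ with constant derivative $\mathrm{tr}(A_i + A_i^*)/(m N_{i,m})$. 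The difference of these two slopes is precisely $\mathrm{Chow}_m(\CX_{A_i},\CL_{A_i})$ by Definition \ref{dfchowwt}, whence $\lim_{t \to +\infty} \mathscr{D}^{\mathrm{coupled}}_m(H_{1,t},\dots,H_{k,t})/t$ coincides with the right-hand side of the claimed identity.

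To pass from the slope of $\mathscr{D}^{\mathrm{coupled}}_m/t$ to the limit of its derivative, I would invoke convexity: observing that $\bm{H}_t := H_{1,t} \otimes \cdots \otimes H_{k,t}$ can be written as $e^{-t\bm{A}^*}e^{-t\bm{A}}$ with $\bm{A} := \sum_i \mathrm{id} \otimes \cdots \otimes A_i \otimes \cdots \otimes \mathrm{id}$ (the tensor summands pairwise commute, so the exponentials factor), the family $\{\bm{H}_t\}_{t \ge 0}$ is itself a Bergman geodesic ray on the tensor product space $H^0(X,mL_1) \otimes \cdots \otimes H^0(X,mL_k)$. Hence Berndtsson's convexity (Theorem \ref{thberndt}) applies to $\mathscr{L}(\mathrm{FS}(\bm{H}_t))$, and since each $\mathscr{E}_{i,m}(H_{i,t})$ is affine in $t$ by (\ref{eqemqaff}), the quantity $\mathscr{D}^{\mathrm{coupled}}_m(H_{1,t},\dots,H_{k,t}) = \mathscr{L}(\mathrm{FS}(\bm{H}_t)) - \sum_i \mathscr{E}_{i,m}(H_{i,t})$ is convex in $t$. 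The elementary fact that $\lim_{t\to+\infty} \dot{f}(t) = \lim_{t\to+\infty} f(t)/t$ for such convex functions of linear growth then completes the argument. I do not anticipate a substantial obstacle: the only points requiring care are the validity of Theorem \ref{thasfed} for polarisations other than $-K_X$ and the identification of $\{\bm{H}_t\}$ as a Bergman geodesic ray on the tensor product space, both of which are routine.
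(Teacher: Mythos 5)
Your proposal is correct and takes essentially the same approach as the paper: the same decomposition $\mathscr{D}^{\mathrm{coupled}}_m = \mathscr{D}^{\mathrm{coupled}} + \sum_i (\mathscr{E} - \mathscr{E}_{i,m})$, then Theorem \ref{ppcdgas} for the first term and the $L_i$-analogue of Theorem \ref{thasfed} with Definition \ref{dfchowwt} for the rest. The extra observations you add (that $\bm{H}_t$ is itself a Bergman geodesic ray on the tensor product space, and the convexity argument for exchanging $\lim \dot f$ with $\lim f/t$) are correct and merely make explicit what the paper leaves implicit.
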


\begin{proof}
The claim is obvious by noting, as in \cite{st19}, that $\mathscr{D}^{\mathrm{coupled}}_m (H_{1,t} , \dots , H_{k,t})$ can be written as
\begin{equation*}
\mathscr{D}^{\mathrm{coupled}} (\mathrm{FS}_1 (H_{1,t}) , \dots , \mathrm{FS}_k (H_{k,t})) + \sum_{i=1}^k \left( \mathscr{E} (\mathrm{FS}_i (H_{i,t})) - \mathscr{E}_{i,m} (H_i) \right),
\end{equation*}
by recalling Lemma \ref{lmrfmc}, and applying Lemma \ref{lmdvffs}, Definition \ref{dfchowwt}, Theorems \ref{thasfed} and \ref{ppcdgas}.
\end{proof}

We now prove the following coupled version of Theorem \ref{mthst}.

\begin{theorem} \label{thckebm}
	$(X , -K_X; L_1 , \dots , L_k)$ admits a coupled anticanonically balanced metric at level $m$, which is unique up to $\mathrm{Aut}_0 (X)$, if and only if
	\begin{equation*}
		\mathrm{Ding} \left( (\CX_i , \CL_i)_{i=1}^k \right) + \sum_{i=1}^k \mathrm{Chow}_{m} (\CX_i , \CL_i) \ge 0
	\end{equation*}
	holds for any $k$-tuple of very ample test configurations $ (\CX_1 , \CL_1) , \dots , (\CX_k , \CL_k )$ for $(X, L_1) , \dots , (X , L_k)$ respectively, each of exponent $m$, with equality if and only if the test configuration $(\CY , \CL_{\CY})$ generated by their $\cx^*$-actions is product.
\end{theorem}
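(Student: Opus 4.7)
The plan is to adapt the proof of Theorem \ref{mthst} to the coupled setting, using Lemma \ref{lmrfmc} as the key bridge that allows Berndtsson's convexity on $\mathscr{L}(\mathrm{FS}(\bm{H}_t))$ to be transferred to the coupled functional. More specifically, along a Bergman geodesic $\{(H_{1,t},\ldots,H_{k,t})\}_{t\ge 0}\subset \bm{\CB}_m$ generated by $(A_1,\ldots,A_k)$, one has $\mathscr{L}^{\mathrm{coupled}}(\mathrm{FS}_1(H_{1,t}),\ldots,\mathrm{FS}_k(H_{k,t}))=\mathscr{L}(\mathrm{FS}(\bm{H}_t))$ by Lemma \ref{lmrfmc}, and the latter is convex in $t$ by Theorem \ref{thberndt} applied to the Bergman geodesic in $\CB_m$ generated by the tensor product one-parameter subgroup $\tau^{A_1}\otimes\cdots\otimes \tau^{A_k}$. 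Each $\mathscr{E}_{i,m}(H_{i,t})$ is affine in $t$ by an analogue of (\ref{eqemqaff}), so $\mathscr{D}^{\mathrm{coupled}}_m$ is convex along all Bergman geodesics in $\bm{\CB}_m$; strictness of the convexity fails exactly in the cases controlled by Berndtsson's equality statement, which I shall use again at the end.

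For the direction \emph{stability $\Rightarrow$ coupled anticanonically balanced metric}, I would first assume each $A_i$ is $H_{i,0}$-hermitian with rational (and after rescaling, integral) eigenvalues. The lemma immediately preceding Theorem \ref{thckebm} then identifies the asymptotic slope $\lim_{t\to+\infty}\tfrac{d}{dt}\mathscr{D}^{\mathrm{coupled}}_m(H_{1,t},\ldots,H_{k,t})$ with $\mathrm{Ding}((\CX_{A_i},\CL_{A_i})_{i=1}^k)+\sum_i\mathrm{Chow}_m(\CX_{A_i},\CL_{A_i})$, which is $\ge 0$ by hypothesis. For general $H_{i,0}$-hermitian $A_i$ with real eigenvalues, I would apply the rational approximation of \cite[Lemma 3.15]{hk18} simultaneously to each $A_i$, chosen so that for $p$ large the flat limit inside $\prj(H^0(X,mL_1)^{\vee}\otimes\cdots\otimes H^0(X,mL_k)^{\vee})$ that produces $(\CY,\CL_\CY)$ is preserved; passing to the limit in the slope formula (via Proposition \ref{lmasll} applied to the tensor product Bergman geodesic) shows non-negativity persists. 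The $\mathrm{Aut}_0(X)$-invariance of $\mathscr{D}^{\mathrm{coupled}}_m$ is verified by taking the diagonal action on $\bm{\CB}_m$ through the representations $\theta_i:\mathrm{Aut}_0(X)\inj GL(H^0(X,mL_i))$: a one-parameter subgroup generated by $A\in\mathfrak{aut}(X)$ induces the tuple $(\theta_{i,*}(A))_i$, and the associated $(\CY,\CL_\CY)$ is product because all $(\CX_{A_i},\CL_{A_i})$ arise from the common holomorphic vector field $A$, so the asymptotic slope vanishes. Completeness of $\bm{\CB}_m$ as a Riemannian symmetric space with respect to the bi-invariant metric, combined with the $\mathrm{Aut}_0(X)$-invariant strictly positive slopes transverse to the orbit, yields a critical point of $\mathscr{D}^{\mathrm{coupled}}_m$ that is unique modulo $\mathrm{Aut}_0(X)$, precisely as in the proof of Theorem \ref{mthst}.

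For the converse direction, suppose a coupled anticanonically balanced metric exists and is unique up to $\mathrm{Aut}_0(X)$. Then $\mathscr{D}^{\mathrm{coupled}}_m$ is bounded below on $\bm{\CB}_m$ by convexity, which forces $\mathrm{Aut}_0(X)$-invariance (else the slope along an orbit would be strictly negative as in (\ref{eqdvdmautg})) and properness modulo $\mathrm{Aut}_0(X)$. Given any $k$-tuple of very ample test configurations $(\CX_i,\CL_i)$ with generators $A_i$, choose $H_{i,0}$-hermitian representatives and the associated Bergman geodesic; the asymptotic slope is $\ge 0$ by properness, giving the stated inequality, and equality forces the geodesic to lie in an $\mathrm{Aut}_0(X)$-orbit.

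The main obstacle is handling the equality case carefully: I must show that vanishing of the slope is equivalent to $(\CY,\CL_\CY)$ being product (the weaker condition), not to each $(\CX_i,\CL_i)$ being product individually. The key point is that equality in the convexity inequality $\tfrac{d^2}{dt^2}\mathscr{L}(\mathrm{FS}(\bm{H}_t))\ge 0$, via Theorem \ref{thberndt}, produces a single holomorphic vector field on $X$ whose flow $\psi_t$ satisfies $\psi_t^*(\ddbar \mathrm{FS}(\bm{H}_t))=\ddbar\mathrm{FS}(\bm{H}_0)$; by the equivariant embedding (\ref{eqautffebd}) applied to $-mK_X=\iota_{\mathrm{coupled}}^*\CO_{\prj}(1)$, this vector field lifts to an element of $\mathfrak{aut}(X)$ whose induced action on $H^0(X,mL_1)^{\vee}\otimes\cdots\otimes H^0(X,mL_k)^{\vee}$ agrees with $e^{tA_1}\otimes\cdots\otimes e^{tA_k}$ up to an overall scalar, which is exactly the statement that $(\CY,\CL_\CY)$ is product. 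Conversely, any such $(\CY,\CL_\CY)$ gives zero slope by the linearisation invariance of Remark \ref{rmtsliv} and the same argument used to verify $\mathrm{Aut}_0(X)$-invariance above.
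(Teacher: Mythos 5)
Your overall approach is the right one and matches the paper's strategy: use Lemma \ref{lmrfmc} to rewrite $\mathscr{L}^{\mathrm{coupled}}$ as $\mathscr{L}(\mathrm{FS}(\bm{H}_t))$, invoke Berndtsson's convexity on the tensor-product Bergman geodesic in $\CB_m$, and then run the argument of Theorem \ref{mthst} on the symmetric space $\bm{\CB}_m$. However, there are two points where you skip over or misstate precisely the steps the paper considers to be the content of this proof.

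First, you write that you ``apply the rational approximation of [hk18, Lemma 3.15] simultaneously to each $A_i$, chosen so that for $p$ large the flat limit inside $\prj(H^0(X,mL_1)^\vee\otimes\cdots\otimes H^0(X,mL_k)^\vee)$ that produces $(\CY,\CL_\CY)$ is preserved'' --- but you do not say why such a choice exists. This is exactly the nontrivial point the paper addresses: it is one thing to choose $A_{i,p}\to A_i$ so that the flat limit of each $e^{-tA_{i,p}}\cdot\iota_i(X)$ agrees with that of $e^{-tA_i}\cdot\iota_i(X)$, and another to guarantee simultaneously that the flat limit of $(e^{-tA_{1,p}}\otimes\cdots\otimes e^{-tA_{k,p}})\cdot\iota_{\mathrm{coupled}}(X)$ agrees with $\CY_0$. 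The paper's justification is that the weights of $(A_1,\dots,A_k)$ on the tensor product are the sums $\sum_i\lambda_{i,j_i}$, i.e.\ $\itg$-linear combinations of the individual eigenvalues, and it is precisely this linear structure that makes the argument of \cite[Lemmas 3.15 and 3.17]{hk18} carry over verbatim; the same observation is needed to justify the decomposition $A_i=A_{i,\alpha}+A_{i,\beta}$ with all three of the flat limits of $e^{-tA_{1}}\otimes\cdots\otimes e^{-tA_{k}}$, $e^{-tA_{1,\alpha}}\otimes\cdots\otimes e^{-tA_{k,\alpha}}$, $e^{-tA_{1,\beta}}\otimes\cdots\otimes e^{-tA_{k,\beta}}$ agreeing. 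Without this you cannot conclude the slope formula passes to the limit nor run the equality argument.

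Second, your final paragraph on the equality case has a genuine logical gap. You claim that vanishing of the asymptotic slope of $\mathscr{D}^{\mathrm{coupled}}_m$ forces $\tfrac{d^2}{dt^2}\mathscr{L}(\mathrm{FS}(\bm{H}_t))=0$, so that Berndtsson's equality statement in Theorem \ref{thberndt} produces a holomorphic vector field. This implication fails: a convex function can have vanishing asymptotic slope without having identically zero second derivative (for instance $t\mapsto e^{-t}$), so you cannot invoke Theorem \ref{thberndt} directly from the hypothesis that the slope limit is zero. In the direction ``$F$-polystability $\Rightarrow$ balanced'' the equality case must instead be handled as in the proof of Theorem \ref{mthst}: decompose each $A_i$ into rational and irrational parts $A_{i,\alpha}+A_{i,\beta}$ compatibly with the tensor-product flat limit (this is where the observation above is used again), apply the hypothesis to the rational test configuration generated by $(A_{1,\alpha},\dots,A_{k,\alpha})$ to conclude $\CY_0\cong\iota_{\mathrm{coupled}}(X)$, and then deduce that the generator lies in $\mathfrak{aut}(X)_r$ via the equivariant embedding. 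Berndtsson's equality (through properness) is used only in the converse direction, where the existence of a critical point makes the slope nondecreasing and starting from zero, so that a vanishing asymptotic slope genuinely forces the derivative --- and hence $\tfrac{d^2}{dt^2}\mathscr{L}$ --- to vanish identically.
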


\begin{proof}
	The proof in \S \ref{scpfmtmc} applies almost word by word: for each $(A_1 , \dots , A_k) \in \mathfrak{gl} (H^0(X , mL_1) \times \cdots \times \mathfrak{gl} (H^0(X , mL_k)$ we choose an $H_{i,0}$-orthonormal basis for $H^0 (X, mL_i)$ such that $A_i$ is diagonal, for $i=1 , \dots , k$, and argue as before. The only nontrivial point, however, is the following: when we choose a rational approximation $\{ A_{i,p} \}_{p=1}^{\infty}$ of $A_i$ such that the associated flat limit defined by $A_{i,p}$ is the same as the one defined by $A_i$ for all large enough $p$, the flat limit $\CY_0 \subset \prj (H^0 (X , mL_1)^{\vee} \otimes \cdots \otimes H^0 (X , mL_k)^{\vee} )$ defined by $e^{- A_{1} t} \otimes \cdots \otimes e^{- A_{k}t}$ is also the same as the one defined by $e^{- A_{1,p} t} \otimes \cdots \otimes e^{- A_{k,p}t}$ for all large enough $p$. This is indeed true, since the weight of $(A_1 , \dots , A_k)$ on $H^0 (X , mL_1)^{\vee} \otimes \cdots \otimes H^0 (X , mL_k)^{\vee} $, with $A_i = \mathrm{diag} (\lambda_{i,1} , \dots , \lambda_{i, N_{i,m}} )$, is given by the sum (and hence a $\itg$-linear combination) of these weights as $\sum_{i=1}^k \lambda_{i, j_i}$ ($j_i \in \{ 1 , \dots , N_{i,m} \}$ and $i=1 ,\dots , k $); thus the proof of \cite[Lemmas 3.15 and 3.17]{hk18}, which was used in the proof of Theorem \ref{mthst}, applies word by word. Likewise, when we choose $A_{i, \alpha}$ and $A_{i , \beta}$ as (\ref{eqalbtrti}) in the proof of Theorem \ref{mthst}, we can show that the flat limits defined by $e^{- A_{1} t} \otimes \cdots \otimes e^{- A_{k}t}$, $e^{- A_{1,\alpha} t} \otimes \cdots \otimes e^{- A_{k,\alpha}t}$, and $e^{- A_{1,\beta} t} \otimes \cdots \otimes e^{- A_{k,\beta}t}$ are all equal. This ensures that the proof given in \S \ref{scpfmtmc} applies word by word.
\end{proof}

We also get the following coupled version of Corollary \ref{mthstc}; see \S \ref{aoidmcpd} for the definition of $\delta^{\textup{coupled}}_{m}$.

\begin{corollary} \label{crcpdm}
	Suppose that $\mathrm{Aut}_0 (X)$ is trivial. Then $\delta^{\textup{coupled}}_{m} > 1$ if and only if
	\begin{equation*}
		\mathrm{Ding} \left( (\CX_i , \CL_i)_{i=1}^k \right) + \sum_{i=1}^k \mathrm{Chow}_{m} (\CX_i , \CL_i) > 0
	\end{equation*}
	holds for any $k$-tuple of very ample test configurations $ (\CX_1 , \CL_1) , \dots , (\CX_k , \CL_k )$ for $(X, L_1) , \dots , (X , L_k)$ respectively, each of exponent $m$ and none of which are trivial.
\end{corollary}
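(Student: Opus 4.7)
The plan is to adapt the proof of Corollary \ref{mthstc} to the coupled setting, using Theorem \ref{thckebm} in place of Theorem \ref{mthst} together with an appropriate coupled analogue of the Rubinstein--Tian--Zhang characterisation \cite[Theorem 2.3]{rtz}, which would equate $\delta^{\textup{coupled}}_{m} > 1$ with the existence of a coupled anticanonically balanced metric at level $m$.

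First I would observe that since $\mathrm{Aut}_0 (X)$ is trivial, a product test configuration must be trivial, so the ``equality iff product'' clause of Theorem \ref{thckebm} reduces to ``equality iff $(\CY , \CL_{\CY})$ trivial''. As pointed out in the discussion following Definition \ref{dftcgcp}, $(\CY , \CL_{\CY})$ is trivial if and only if each $(\CX_i , \CL_i)$ is trivial, so the equality case is precisely the $k$-tuple of trivial test configurations, for which both $\mathrm{Ding}((\CX_i,\CL_i)_{i=1}^k)$ and each $\mathrm{Chow}_m(\CX_i,\CL_i)$ vanish by translation invariance (cf.~Remark \ref{rmtsliv}).

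For the forward implication, assume $\delta^{\textup{coupled}}_{m} > 1$. By the coupled analogue of \cite[Theorem 2.3 (i)]{rtz}, a coupled anticanonically balanced metric at level $m$ exists, and it is unique: uniqueness modulo $\mathrm{Aut}_0(X)$ follows from the coupled version of Proposition \ref{ppuqqdfn}, which is an immediate consequence of Berndtsson's convexity (Theorem \ref{thberndt}) applied to $\mathscr{L}\circ \mathrm{FS}$ on the Bergman geodesic ray $\{\mathrm{FS}(\bm{H}_t)\}_{t\geq 0}$ associated to a coupled Bergman geodesic in $\bm{\CB}_m$ (cf.~Lemma \ref{lmrfmc} and \cite[Proof of Proposition 3.3]{tak19}), combined with \cite[Theorem 1.1]{Lempert2021}. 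By Theorem \ref{thckebm}, this yields $\mathrm{Ding}((\CX_i,\CL_i)_{i=1}^k) + \sum_{i=1}^k \mathrm{Chow}_m(\CX_i,\CL_i) \geq 0$ for every $k$-tuple of very ample test configurations of exponent $m$, with equality only when each $(\CX_i,\CL_i)$ is trivial by the observation above; hence the inequality is strict for any nontrivial $k$-tuple. For the reverse implication, the hypothesis together with the vanishing in the trivial case covers the entire stability condition of Theorem \ref{thckebm}, hence a coupled anticanonically balanced metric at level $m$ exists, and the coupled analogue of \cite[Theorem 2.3 (ii)]{rtz} yields $\delta^{\textup{coupled}}_{m} > 1$.

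The main obstacle I anticipate is securing the coupled analogue of \cite[Theorem 2.3]{rtz} in the precise form required, i.e.~that the coupled $\delta_m$-invariant of Appendix \ref{aoidmcpd} characterises the existence of coupled anticanonically balanced metrics in the manner parallel to the usual case. The variational and quantisation arguments of \cite{rtz} should transplant to $\mathscr{D}^{\mathrm{coupled}}_m$ on $\bm{\CB}_m$ using Lemma \ref{lmrfmc} (which lets one treat $\mathscr{L}^{\mathrm{coupled}}$ as a pullback of $\mathscr{L}$ under the tensor embedding $\iota_{\mathrm{coupled}}$) and the quantised slope formulae developed in \S \ref{scprcsf}, but verifying the coupled properness/coercivity estimate that drives the existence half is the essential technical step on which the corollary rests.
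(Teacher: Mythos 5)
Your proposal is correct and follows essentially the same route as the paper's proof: apply Theorem \ref{thckebm} in place of Theorem \ref{mthst}, use the coupled analogue of Proposition \ref{ppuqqdfn} (cf.~\cite[Proposition 3.3]{tak19}) for uniqueness, and note that $(\CY,\CL_{\CY})$ is trivial if and only if each $(\CX_i,\CL_i)$ is. The ``main obstacle'' you anticipate is not actually a gap: the coupled analogue of \cite[Theorem 2.3]{rtz} that equates $\delta^{\textup{coupled}}_m>1$ with the existence of coupled anticanonically balanced metrics is already available as \cite[Theorem A.12]{rtz}, which is exactly what the paper cites, so no further coercivity estimate needs to be transplanted.
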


\begin{proof}
	Exactly the same as that of Corollary \ref{mthstc}, by using \cite[Theorem A.12]{rtz} and \cite[Proposition 3.3]{tak19}; recall from \S \ref{sccdinv} that $ (\CX_1 , \CL_1) , \dots , (\CX_k , \CL_k )$ are all nontrivial if and only if the test configuration $(\CY , \CL_{\CY})$ generated by their $\cx^*$-actions is.
\end{proof}

\appendix

\section{The $\delta_m$-invariant of Fujita--Odaka and its extensions} \label{appdmivfo}

\subsection{Original invariant $\delta_m$} \label{aoidm}

We recall the invariants $\delta_m$ and $\delta$, following \cite{bj20,fo18}. We start by quickly recalling the log canonical thresholds, following the exposition in \cite[\S 2.3]{Fujita18}.


\begin{definition} \label{dflct}
	Let $Y$ be a normal variety and $\Delta$ be an $\rl$-divisor on $Y$ such that $K_Y + \Delta$ is $\rl$-Cartier, where $\Delta$ is not necessarily effective. The pair $(Y,\Delta )$ is said to be \textbf{sub log canonical} if $a(E,Y,\Delta ) \ge -1$ holds for any proper birational morphism $\sigma : \tilde{Y} \to Y$ with $\tilde{Y}$ normal and any prime divisor $E$ on $\tilde{Y}$, where $a(E,Y,\Delta ) := \mathrm{ord}_E (K_{\tilde{Y}} - \sigma^* (K_Y +\Delta ))$. We say that $(Y,\Delta )$ is \textbf{log canonical} if it is sub log canonical and $\Delta$ is effective.
	
	For the pair $(Y,\Delta )$ as above, and for a nonzero effective $\rl$-Cartier divisor $B$ on $Y$, the \textbf{log canonical threshold of $B$ with respect to $(Y,\Delta )$} is defined as
	\begin{equation*}
		\mathrm{lct} (Y,\Delta ;B) := \sup \{ c \in \rl \mid (Y,\Delta +cB) \text{ is sub log canonical}. \};
	\end{equation*}
	if $(Y,\Delta +cB)$ is not sub log canonical for any $c \in \rl$, we set $\mathrm{lct} (Y,\Delta ;B) = - \infty$.
	
	If $Y$ is further assumed to be $\rtn$-Fano, i.e.~$-K_Y$ is an ample $\rtn$-Cartier divisor and $Y$ has at worst log terminal singularities, and $D$ is an effective $\rtn$-Cartier divisor on $Y$, we further define the \textbf{log canonical threshold of $Y$ along $D$} as
\begin{equation*}
	\mathrm{lct} (Y;D) := \sup \{ c \in \rl_{\ge 0} \mid (Y,cD) \text{ is log canonical}. \}.
\end{equation*}
\end{definition}

While it is good to have the above general definitions, in this appendix we apply them exclusively to the case when the variety in question is a smooth Fano variety. As in the body of the text, $X$ stands for a Fano manifold of complex dimension $n$.

\begin{definition}
	Let $X$ be a Fano manifold and $m$ be a positive integer, and define $N_m := \dim_{\cx} H^0 (X , - mK_X)$. We say that a divisor $D$ on $X$ is of \textbf{$m$-basis type} if there exists a basis $\{ s_i \}_{i=1}^{N_m}$ for $H^0 (X , -mK_X)$ such that
	\begin{equation*}
		D = \frac{1}{m N_m} \sum_{i=1}^{N_m} \mathrm{div} (s_i).
	\end{equation*}
\end{definition}

\begin{definition}
	Let $X$ be a Fano manifold and $m$ be a positive integer. The \textbf{$\delta_m$-invariant} of Fujita--Odaka is defined by
	\begin{equation*}
		\delta_m := \inf_D \mathrm{lct} (X;D)
	\end{equation*}
	where the infimum is taken over all divisors on $X$ that are of $m$-basis type. The \textbf{$\delta$-invariant} is defined by
	\begin{equation*}
		\delta := \limsup_{m \to \infty} \delta_m .
	\end{equation*}
\end{definition}


Blum--Jonsson \cite{bj20} provided a valuative formulation of the above invariants, which we now recall. 

\begin{definition}
	$F$ is said to be a \textbf{prime divisor over $X$} if there exists a normal variety $Y$ and a proper birational morphism $\sigma : Y \to X$ such that $F$ is a reduced irreducible Weil divisor on $Y$.
\end{definition}

Writing 
\begin{equation*}
	A_X (F) := 1+ \mathrm{ord}_F (K_{Y/X})
\end{equation*}
for the log discrepancy of $X$ along a prime divisor $F$ over $X$, where $K_{Y/X} = K_Y - \sigma^* K_X$ is the relative canonical bundle, it is well-known (see e.g.~\cite[\S 1.8]{bj20} and \cite[\S 1.5]{bhj1}) that we can write
\begin{equation*}
	\mathrm{lct} (X;D) = \inf_F \frac{A_X (F)}{\mathrm{ord}_F(D)},
\end{equation*}
where the infimum is taken over all prime divisors over $X$ with $ \mathrm{ord}_F(D) > 0$.

For a prime divisor $F$ over $X$, with $F \subset Y$ and a birational morphism $\sigma : Y \to X$, we write
\begin{equation*}
	S_m (F) := \frac{1}{mN_m} \sum_{j=1}^{\infty} \dim_{\cx} H^0 (Y,  \sigma^*(-mK_X) -jF ),
\end{equation*}
where we observe that all but finitely many summands are zero. A formula in \cite[proof of Lemma 2.2]{fo18} shows
\begin{equation*}
	S_m (F) = \sup_D \mathrm{ord}_F(D)
\end{equation*}
where the supremum is taken over all $m$-basis type divisors; in fact, \cite[proof of Lemma 2.2]{fo18} also shows that the supremum is attained by an $m$-basis type divisor $D_F$ which is compatible with the filtration of $H^0 (X , -mK_X)$ defined by $F$. Moreover \cite[proof of Theorem 2.1]{fo18} shows
\begin{equation*}
	\lim_{m \to \infty} S_m (F) = \frac{1}{\mathrm{Vol} (-K_X)} \int_0^{+ \infty} \mathrm{Vol} (-K_X -xF) dx =: S(F),
\end{equation*}
where $\mathrm{Vol} (-K_X -xF) := n! \limsup_{m \to \infty} \dim_{\cx} H^0 (Y,  \sigma^*(-mK_X) -xF ) / m^n$. Blum--Jonsson \cite[Proposition 4.3]{bj20} proved that 
\begin{equation*}
	\delta_m = \inf_F \frac{A_X (F)}{S_m (F)} ,
\end{equation*}
where the infimum is taken over all prime divisors over $X$. They moreover proved \cite[Theorem 4.4]{bj20} that the limit $\lim_{m \to \infty} \delta_m$ exists, and that
\begin{equation*}
	\delta = \lim_{m \to \infty} \delta_m =  \inf_F \frac{A_X (F)}{S (F)} ,
\end{equation*}
where the infimum is again taken over all prime divisors over $X$.

\subsection{K\"ahler--Ricci $g$-solitons version $\delta_m^g$}  \label{aoidmgs}

The invariants for the K\"ahler--Ricci $g$-solitons were defined by Rubinstein--Tian--Zhang \cite[Definition 6.4]{rtz}.

\begin{definition}
	Suppose that an algebraic torus $T^{\cx}$ acts on $X$, which induces a weight decomposition
	\begin{equation*}
	H^0 (X , -mK_X ) = \bigoplus_{\lambda \in P_m} R_{m ,\lambda}
	\end{equation*}
	as in \S \ref{scqkrgs}, where $P_m$ is the character lattice and $T^{\cx}$ acts by the character $\lambda$ on $R_{m ,\lambda}$. We say that $D$ is an \textbf{$(m,g)$-basis divisor} if there exists a basis $\{ s_{\alpha}^{(\lambda )} \}_{\alpha =1}^{N_{m , \lambda}}$ for each $R_{m ,\lambda}$ such that
	\begin{equation*}
		D = \frac{1}{m N_m \overline{g_m}} \sum_{\lambda \in P_m} \sum_{\alpha = 1}^{N_{m , \lambda}} g (\lambda / m) \{ s_{\alpha}^{(\lambda)} = 0 \}.
	\end{equation*}
	The invariant $\delta_m^g$ is defined by
	\begin{equation*}
		\delta_m^g := \inf_D \mathrm{lct} (X;D)
	\end{equation*}
	where the infimum is over all $(m,g)$-basis divisors. The invariant $\delta^g$ is defined by
	\begin{equation*}
		\delta^g := \limsup_{m \to \infty} \delta^g_m.
	\end{equation*}
\end{definition}
Analogues of the results in \S \ref{aoidm} are also given in \cite{rtz}. They define
\begin{equation*}
	S^g_m (F) := \frac{1}{m N_m \overline{g_m}} \sum_{\lambda \in P_m} \sum_{a = 1}^{\infty} g (\lambda / m) \dim_{\cx} \CF^{\ge a}_{\mathrm{ord}_F} R_{m, \lambda}
\end{equation*}
for a $T^{\cx}$-invariant prime divisor over $X$, where $\CF^{\ge a}_{\mathrm{ord}_F} R_{m, \lambda}$ is the subspace of $R_{m , \lambda}$ consisting of sections that vanish along $F$ with order at least $a$. We then have, by \cite[Lemma 6.5]{rtz},
\begin{equation*}
	\delta^g_m = \inf_F \frac{A_X (F)}{S^g_m (F)}
\end{equation*}
where the infimum is over all $T^{\cx}$-invariant prime divisors over $X$.

Setting
\begin{equation*}
\mathrm{Vol}^g (-K_X - xF) := \limsup_{m \to \infty} \frac{n!}{m^n} \sum_{\lambda \in P_m} g (\lambda / m) \dim_{\cx} \CF^{\ge x}_{\mathrm{ord}_F} R_{m, \lambda}
\end{equation*}
and defining
\begin{equation*}
	S^g (F) := \int_0^{+ \infty} \mathrm{Vol}^g (-K_X - xF) dx,
\end{equation*}
we have, by \cite[Proposition 6.14]{rtz},
\begin{equation*}
	\delta^g = \lim_{m \to \infty} \delta^g_m = \inf_F \frac{A_X (F)}{S^g (F)}
\end{equation*}
where the infimum is over all $T^{\cx}$-invariant prime divisors over $X$.

K.~Zhang \cite[Remark 5.3]{Zhang21} recently proved that $\delta^g >1$ implies the existence of K\"ahler--Ricci $g$-solitons.

\subsection{Coupled K\"ahler--Einstein version $\delta^{\mathrm{coupled}}_m$} \label{aoidmcpd}
The coupled version of the $\delta_m$-invariant was defined by Rubinstein--Tian--Zhang \cite[Definition A.2]{rtz} as follows.

\begin{definition}
	The \textbf{coupled $\delta_m$-invariant} is defined by
	\begin{equation*}
		\delta^{\text{coupled}}_{m} := \inf_{D_1 , \dots , D_k} \mathrm{lct} \left( X ; \sum_{i=1}^k D_i \right) ,
\end{equation*}
where the infimum is over a $k$-tuple of divisors $ (D_1 , \dots , D_k)$ such that $D_i \sim_{\rtn} L_i$ is an $m$-basis divisor for each $i=1 , \dots , k$. The \textbf{coupled $\delta$-invariant} is defined by
\begin{equation*}
	\delta^{\text{coupled}} := \limsup_{m \to \infty} \delta^{\text{coupled}}_m .
\end{equation*}
\end{definition}

Rubinstein--Tian--Zhang \cite{rtz} considers a more general definition in which $D_i$'s are allowed to have different exponents, i.e.~$D_i \sim_{\rtn} L_i$ being an $m_i$-basis divisor for possibly distinct integers $m_1 , \dots , m_k$, but it is not necessary for the results in this paper.

The valuative formula for $\delta_m$ and $\delta$ naturally generalises to the coupled case. For a prime divisor $F$ over $X$, with a proper birational morphism $\sigma : Y \to X$, we define
\begin{equation*}
	S_{m} (L_i;F) := \frac{1}{m h^0 (X , m L_i)} \sum_{j=1}^{\infty} \dim_{\cx} H^0 (Y,  \sigma^*(-m L_i) -jF ).
\end{equation*}
We have \cite[Lemma A.3]{rtz}
\begin{equation*}
	\delta^{\text{coupled}}_{m} = \inf_F \frac{A_X (F)}{\sum_{i=1}^k S_{m} (L_i; F)}
\end{equation*}
where the infimum runs over all prime divisors over $X$; the proof is exactly the same as above, by noting
\begin{equation*}
	\mathrm{lct} \left( X,\sum_{i=1}^k D_i \right)  = \inf_F \frac{A_X (F)}{\sum_{i=1}^k \mathrm{ord}_F( D_i )}.
\end{equation*}

Likewise, repeating the argument in \cite[Proof of Theorem 4.4]{bj20} immediately implies that the limit $\delta^{\text{coupled}} := \lim_{m \to \infty} \delta^{\text{coupled}}_{m}$ exists, and equals
\begin{equation*}
	\delta^{\text{coupled}} = \inf_F \frac{A_X (F)}{\sum_{i=1}^k S (L_i; F)}
\end{equation*}
where the infimum is over all prime divisors over $X$ and
\begin{equation*}
	S (L_i; F):=\frac{1}{\mathrm{Vol} (L_i)} \int_0^{+ \infty} \mathrm{Vol} (L_i -xF) dx .
\end{equation*}

It was proved by K.~Zhang \cite[Remark 5.3]{Zhang21} that $\delta^{\text{coupled}} >1$ implies the existence of coupled \ke metrics.

\bibliography{acbal.bib}


\end{document}